\documentclass[11pt]{article}
\usepackage{geometry}
 \geometry{
 a4paper,
 total={210mm,297mm},
 left=20mm,
 right=20mm,
 top=20mm,
 bottom=30mm,
 }

\usepackage{subcaption}
\usepackage{amsthm,amsmath,amssymb,amsfonts,hyperref}
\usepackage{epsfig,color}
\usepackage{enumerate}

\usepackage{arydshln}	
\usepackage{pdflscape} 
\usepackage{multirow}
\usepackage{rotating} 
\usepackage{ulem} 


\newtheorem{thm}{Theorem}[section]
\newtheorem{defn}[thm]{Definition}
\newtheorem{prop}[thm]{Proposition}
\newtheorem{cor}[thm]{Corollary}
\newtheorem{rmk}[thm]{Remark}
\newtheorem{lma}[thm]{Lemma}

\newtheorem{exm}[thm]{Example}

\newcommand{\eq}{\begin{equation}}
\newcommand{\qe}{\end{equation}}

\def\N{{\rm I\kern-0.16em N}}
\def\R{{\rm I\kern-0.16em R}}

\def\P{{\rm I\kern-0.16em P}}
\def\F{{\rm I\kern-0.16em F}}
\def\B{{\rm I\kern-0.16em B}}
\def\C{{\rm I\kern-0.46em C}}
\def\G{{\rm I\kern-0.50em G}}
\def\Z{{\mathbb{Z}}}

\numberwithin{equation}{section}

\begin{document}

\normalem 

\title{First order covariance inequalities via Stein's method}

\date{}
\renewcommand{\thefootnote}{\fnsymbol{footnote}}
\begin{small}
  \author{Marie Ernst\footnotemark[1], \, 
Gesine Reinert\footnotemark[2] \, and Yvik Swan\footnotemark[1]}
\end{small}
\footnotetext[1]{Universit\'e de Li\`ege, corresponding author Yvik
  Swan: yswan@uliege.be.}
\footnotetext[2]{University of Oxford.}

\maketitle

\begin{abstract} We propose probabilistic representations for inverse
  Stein operators (i.e.\ solutions to Stein equations) under general
  conditions; in particular we deduce new simple expressions for the
  Stein kernel. These representations allow to deduce uniform and
  non-uniform Stein factors (i.e.\ bounds on solutions to Stein
  equations) and lead to new covariance identities expressing the
  covariance between arbitrary functionals of an arbitrary {univariate} target in
  terms of a weighted covariance of the derivatives of the
  functionals. Our weights are explicit, easily computable in most
  cases, and expressed in terms of objects familiar within the context
  of Stein's method.  Applications of the Cauchy-Schwarz inequality to
  these weighted covariance identities lead to sharp upper and lower
  covariance bounds and, in particular, weighted Poincar\'e
  inequalities. Many examples are given and, in particular, classical
  variance bounds due to Klaassen, Brascamp and Lieb or Otto and Menz
  are corollaries. Connections with more recent literature are also
  detailed.
 
\end{abstract}


\section{Introduction}

Much attention has been given in the literature to the problem of
providing sharp tractable estimates on the variance of functions of
random variables. Such estimates are directly related to fundamental
considerations of pure mathematics (e.g.,\ isoperimetric, logarithmic
Sobolev and Poincar\'e inequalities), as well as essential issues from
statistics (e.g.,\ Cramer-Rao bounds, efficiency and asymptotic
relative efficiency computations, maximum correlation coefficients,
and concentration inequalities).

One of the starting points of this line of research is Chernoff's
famous result from \cite{C80} which states that, if
$N \sim \mathcal{N}(0, 1)$, then
\begin{equation}
  \label{eq:99} 
\mathbb{E}[g'(N)]^2 \le \mathrm{Var}[g(N)] \le \mathbb{E}[g'(N)^2]
\end{equation}
for all sufficiently regular functions $g:\R\to\R$.  Chernoff obtained
the upper bound by exploiting orthogonality properties of the family
of Hermite polynomials.  The upper bound in \eqref{eq:99} is, in fact,
already available in \cite{Na58} and is also a special case of the
central inequality in \cite{BrLi76}, see below. 
Cacoullos \cite{C82} extends Chernoff's bound to a wide class of
univariate distributions (including discrete distributions) by proving
that if $X \sim p$ has a density function $p$ with respect to the
Lebesgue measure then
\begin{align}
  \label{eq:69}
  \frac{\mathbb{E}[\tau_p(X)\, g'(X)]^2}{\mathrm{Var}[X]} \le
  \mathrm{Var}[g(X)] \le  \mathbb{E} \left[  \tau_p(X) \, g'(X )^2 \right] 
\end{align}
with
$ \tau_p(x) = {p(x)}^{-1} \int_x^{\infty} (t- \mathbb{E}[X]) p(t)
dt$. It is easy to see that, if $p$ is the standard normal density,
then $\tau_p(x) = 1$ so that \eqref{eq:69} contain \eqref{eq:99}.
Cacoullos also obtains a similar bound as \eqref{eq:69} for discrete
distributions on the positive integers, where the derivative is
replaced by the forward difference and the weight becomes
$ \tau_p(x) = {p(x)}^{-1} \sum_{t=x+1}^{\infty} t p(t).
$ 

Variance inequalities such as \eqref{eq:69} are closely related to the
celebrated \emph{Brascamp-Lieb inequality} from \cite{BrLi76} which,
in dimension 1, states that if $X \sim p$ and $p$ is strictly
log-concave then
\begin{equation}
  \label{eq:brascamplieb}
  \mathrm{Var}[g(X)] \le \mathbb{E}\left[\frac{(g'(X))^2}{(-\log p)''(X)}\right]
\end{equation}
for all sufficiently regular functions $g$.  In fact, the upper bound
from \eqref{eq:99} is an immediate consequence of
\eqref{eq:brascamplieb} because, if $p$ is the standard Gaussian
density, then $(-\log p)''(x) \equiv 1$.  The Brascamp-Lieb inequality
is proved in \cite{menz2013uniform} to be a consequence of Hoeffding's
classical covariance inequality from
\cite{hoffding1940masstabinvariante}, which states that if $(X, Y)$ is a
continuous bivariate random vector with cumulative distribution
$H(x, y)$ and marginal cdfs $F(x), G(x)$ then
\begin{equation}
  \label{eq:Hoefdingcov}
\mathrm{Cov}[f(X), g(Y)] = \int_{-\infty}^{\infty}\int_{-\infty}^{\infty}
f'(x)\Big(H(x, y) - F(x)G(y)\Big)g'(y) \, \mathrm{d}x \, \mathrm{d}y
\end{equation}
under weak assumptions on $f, g$ (see e.g.\
\cite{cuadras2002covariance}). 
The freedom of choice in the test
functions $f, g$ in \eqref{eq:Hoefdingcov} is exploited by
\cite{menz2013uniform} to prove that, if $X$ has a $C^2$ strictly
convex absolutely continuous density $p$ then the 
\emph{asymmetric Brascamp-Lieb inequality} holds:
\begin{equation}
  \label{eq:ottomenz}
  \left| \mathrm{Cov}[f(X), g(X)]  \right| \le \sup_x \left\{
    \frac{|f'(x)|}{ (\log p)''(x)} \right\} \mathbb{E}\big[\left| g'(X)
  \right|\big]. 
\end{equation} 
Identity \eqref{eq:Hoefdingcov} and inequalities
\eqref{eq:brascamplieb} and \eqref{eq:ottomenz} are extended to the
multivariate setting in \cite{carlen2013asymmetric} {which also gives}
 connections with
logarithmic Sobolev inequalities for spin systems and related
inequalities for log-concave densities.
{This material is revisited and extended in} \cite{saumard2018efron, saumard2018weighted,
  saumard2017isoperimetric},
providing applications in the context of
isoperimetric inequalities and weighted Poincar\'e inequalities. 
{In} \cite{cuadras2002covariance}
 the identity
\eqref{eq:Hoefdingcov} is proved in all generality and used to provide
 expansions for the covariance in terms of canonical
correlations and variables.

Further generalizations of Chernoff's bounds are provided in
\cite{chen1985poincare,CaPa85, CP86}, and \cite{karlin1993general}
(e.g., Karlin \cite{karlin1993general} deals with the entire class of
log-concave distributions).  See also
\cite{borovkov1984inequality,CP89, korwar1991characterizations,
  papathanasiou1995characterization, CP95} for the connection with
probabilistic characterizations and other properties. 
Similar inequalities were 
obtained -- often by exploiting
properties of suitable families of orthogonal polynomials -- for
univariate functionals of some specific multivariate distributions
e.g.,\ in
\cite{cacoullos1992lower,cacoullos1998short,chang1999variance,landsman2013note,
  afendras2014note,landsman2015some}. {A historical overview as
  well as a description of the connection between such bounds, the
  so-called Stein identities from Stein's method (see below) and
  Sturm-Liouville theory (see Section \ref{sec:discussion}) can be
  found in} \cite{DZ91}.  To the best of our knowledge, the most
general version of \eqref{eq:99} and \eqref{eq:69} is due to
\cite{K85}, where the following result is proved 
\begin{thm}[Klaassen bounds]\label{thm:klabou}
  Let $\mu$ be some $\sigma$-finite measure. Let ${\rho}(x, y)$ be
  a measurable function such that {${\rho}(x, \cdot)$ does not
    change sign for $\mu$ almost $x\in\mathbb{R}$}. Suppose that $g$ is a
  measurable function such that
  $G(x) = \int {\rho}(x, y) g(y) \,\mu( \mathrm{d}y)+c$ is well
  defined for some $c \in \R$.  Let $X$ be a real random variable with
  density $p$ with respect to $\mu$.
  \begin{itemize}
  \item \label{item:KB1} (Klaassen upper variance bound) For all
    nonnegative measurable functions $h : \R \to \R$ such that
    $\mu \left( \left\{ x \in \R \, | \, g(x) \neq 0, \, p(x) h(x) =
        0\right\} \right) = 0$ we have
\begin{equation}
  \label{eq:klaass}
\mathrm{Var}[ G(X)] \le \mathbb{E} \left[ 
\frac{g(X)^2}{h(X)}  \left( \frac{1}{p(X)} \int{\rho(z, X)} H(z) p(z)
 \mu(\mathrm{d}z) \right)
\right] 
\end{equation}
with  $H : \R \to \R$ supposed  well-defined by
 $H(x) = \int {\rho}(x, y) h(y) \,\mu(\mathrm{d}y)$. 
\item \label{item:KB2} (Cram\'er-Rao lower variance bound) For all
  measurable functions $k : \R \to \R$ such that
  $0 < \mathbb{E}[k^2(X)] < \infty$ and $\mathbb{E}[k(X)] = 0$ we have
\begin{equation}
  \label{eq:klCR}
  \mathrm{Var}[G(X)] \ge \frac{\mathbb{E}\left[g(X) K(X)\right]^2}{\mathrm{Var}[k(X)]}
\end{equation}
where
$K(x) = \frac{1}{p(x)} \int {\rho}(z, x)k(z) p(z) \mu(\mathrm{d}z)$.
Equality in \eqref{eq:klCR} holds if and only if $G$ is linear in $k$,
$p$-almost everywhere.
  \end{itemize}
\end{thm}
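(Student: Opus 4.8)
Both bounds will be derived from the Cauchy--Schwarz inequality, the only real work being the Fubini--Tonelli bookkeeping and the tracking of the sign of $\rho(x,\cdot)$. For the lower bound \eqref{eq:klCR}, since $\mathbb{E}[k(X)]=0$ the additive constant $c$ in $G$ is irrelevant and
\[
\mathrm{Cov}[G(X),k(X)] = \mathbb{E}[G(X)\,k(X)] = \int\!\!\int \rho(x,y)\, g(y)\, k(x)\, p(x)\, \mu(\mathrm{d}y)\, \mu(\mathrm{d}x).
\]
I would first check that this double integral converges absolutely — which follows from Cauchy--Schwarz together with the hypotheses $0<\mathbb{E}[k^2(X)]<\infty$ and $\mathrm{Var}[G(X)]<\infty$ — so that Fubini's theorem applies; interchanging the order of integration and inserting $p(y)/p(y)$ gives
\[
\mathrm{Cov}[G(X),k(X)] = \int g(y) \Big(\frac{1}{p(y)}\int \rho(z,y)\, k(z)\, p(z)\, \mu(\mathrm{d}z)\Big) p(y)\, \mu(\mathrm{d}y) = \mathbb{E}[g(X)\,K(X)].
\]
The Cauchy--Schwarz inequality $\mathrm{Cov}[G(X),k(X)]^2 \le \mathrm{Var}[G(X)]\,\mathrm{Var}[k(X)]$ then yields \eqref{eq:klCR}, and equality in Cauchy--Schwarz holds precisely when $G(X)-\mathbb{E}[G(X)]$ is $p$-a.e.\ a scalar multiple of $k(X)$, i.e.\ $G$ is linear in $k$.

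For the upper bound \eqref{eq:klaass}, since the mean minimises the mean square deviation, $\mathrm{Var}[G(X)] \le \mathbb{E}[(G(X)-c)^2] = \mathbb{E}[\widetilde G(X)^2]$ with $\widetilde G(x) := \int \rho(x,y)\, g(y)\, \mu(\mathrm{d}y)$, so $c$ again drops out and it suffices to bound $\mathbb{E}[\widetilde G(X)^2]$. Fixing an $x$ for which $\rho(x,\cdot)$ has constant sign, writing $g = \sqrt h \cdot (g/\sqrt h)$, and applying Cauchy--Schwarz against the positive measure $|\rho(x,\cdot)|\,\mu$ — the division being legitimate on $\{g\neq 0\}$ by the hypothesis $\mu(\{g\neq0,\,ph=0\})=0$ — one obtains, after the two sign factors cancel,
\[
\widetilde G(x)^2 \le \Big(\int \rho(x,y)\, h(y)\,\mu(\mathrm{d}y)\Big)\Big(\int \rho(x,y)\,\frac{g(y)^2}{h(y)}\,\mu(\mathrm{d}y)\Big) = H(x)\int \rho(x,y)\,\frac{g(y)^2}{h(y)}\,\mu(\mathrm{d}y),
\]
a product that is automatically nonnegative because $h\ge 0$ forces $H(x)$ to carry the sign of $\rho(x,\cdot)$. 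Multiplying by $p(x)$, integrating in $x$, and applying Tonelli's theorem (all integrands are now nonnegative) to swap the $x$- and $y$-integrations gives
\[
\mathbb{E}[\widetilde G(X)^2] \le \int \frac{g(y)^2}{h(y)} \Big(\int \rho(z,y)\, H(z)\, p(z)\, \mu(\mathrm{d}z)\Big) \mu(\mathrm{d}y) = \mathbb{E}\!\left[ \frac{g(X)^2}{h(X)} \Big( \frac{1}{p(X)}\int \rho(z,X)\, H(z)\, p(z)\, \mu(\mathrm{d}z)\Big)\right],
\]
which is exactly \eqref{eq:klaass}.

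I expect the main obstacle to be purely technical rather than conceptual: pinning down the integrability hypotheses under which Fubini's theorem is licensed in the lower bound (in the upper bound Tonelli suffices, since there the integrands can be made nonnegative) and verifying that, when $\rho(x,\cdot)$ is negative, its sign recombines correctly with the sign of $H(x)$ so that both bounds retain their stated, manifestly nonnegative, form. Once those points are settled, each inequality is a single application of Cauchy--Schwarz.
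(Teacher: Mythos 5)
Your argument is correct in outline and is, in essence, Klaassen's original proof: the paper does not prove Theorem~\ref{thm:klabou} itself but cites \cite{K85}, describing that proof as relying on ``little more than the Cauchy-Schwarz inequality and Fubini's theorem'' --- which is exactly what you have written down (variance as minimal mean-square deviation, a weighted Cauchy--Schwarz in $y$ with weight $|\rho(x,\cdot)|\,\mu$ after splitting $g=\sqrt{h}\cdot(g/\sqrt{h})$, then Tonelli for the upper bound; covariance plus Cauchy--Schwarz for the lower bound). The paper's own route to these bounds is genuinely different: it first builds covariance representations through the inverse Stein operator $\mathcal{L}_p^{\ell}$ and the Hoeffding-type kernel $K_p^{\ell}$ (Lemmas~\ref{lem:represnt} and~\ref{lem:can_inverserep2}, Corollary~\ref{prop:stein-covar-ident}), gets the lower bound from the Stein IBP formula plus Cauchy--Schwarz (Proposition~\ref{prop:cramerao}) and the upper bound from a weighted Cauchy--Schwarz applied to the kernel representation (Theorem~\ref{thm:klaassenbounds}), and only afterwards, in Corollary~\ref{cor:klaaaaaassen} and the remark following Example~\ref{def:pearsord}, identifies Klaassen's weights $H$ and $K$ with $-\mathcal{L}_p^{\ell}h$ and $-\mathcal{L}_p^{\ell}k$ for kernels of the special form $\rho_{\zeta}^{\pm}$. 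Your approach is more elementary and treats a completely general kernel $\rho$; the paper's approach buys an interpretation of the otherwise opaque weights and an extension to covariances of two distinct functionals, at the cost of specializing $\rho$ and imposing Assumption~A. One soft spot: in the lower bound you claim absolute convergence of the double integral follows from Cauchy--Schwarz and $\mathrm{Var}[G(X)]<\infty$. That is not automatic, because Fubini requires control of $\int|\rho(x,y)|\,|g(y)|\,\mu(\mathrm{d}y)$, which can exceed $|G(x)-c|$ by an arbitrary amount when $g$ changes sign; some additional integrability hypothesis (or a truncation argument) is needed there. This is the same technicality latent in Klaassen's original statement and does not affect the upper bound, where, as you note, Tonelli suffices.
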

Klaassen's proof of Theorem \ref{thm:klabou} relies on little more
than the Cauchy-Schwarz inequality and Fubini's theorem; it has a
slightly magical aura as little or no heuristic or context is provided
as to the best choices of test functions $h, k$ and kernel $\rho$ or
even to the nature of the weights appearing in \eqref{eq:klaass} and
\eqref{eq:klCR}.  To the best of our knowledge, all available first
order variance bounds from the literature can be obtained from either
\eqref{eq:klaass} or \eqref{eq:klCR} by choosing the appropriate test
functions $h$ or $k$ and the appropriate kernel $\rho$.  For instance,
the weights appearing in the upper bound \eqref{eq:klaass} generalize
the Stein kernel from Cacoullos' bound \eqref{eq:69} -- both in the
discrete and the continuous case. Indeed taking $H(x) = x$ when the
distribution $p$ is continuous we see that then $h(x) = 1$ and the
weight becomes $p(x)^{-1}\int \rho(z, x) z p(z) \mathrm{d} \mu(z)$
which is none other than $\tau_p(x)$. A similar argument holds as well
in the discrete case.  In the same way, taking $k(x) = x$ leads to
$K(x) = \tau_p(x)$ in \eqref{eq:klCR} and thus the lower bound in
\eqref{eq:69} follows as well.  The freedom of choice in the function
$h$ allows for much flexibility in the quality of the weights; this
fact seems somewhat under exploited in the literature.  This is
perhaps due to the rather obscure nature of Klaassen's weights, a topic
which we shall be one of the central learnings of this paper. Indeed
we shall provide a natural theoretical home for Klaassen's result, in
the framework of Stein's method.

Several variations on Klaassen's theorem have already been obtained
via techniques related to
Stein's method. We defer a proper
introduction of these techniques to Section \ref{sec:stein-diff}.  The
gist of the approach can nevertheless be understood very simply in
case the underlying distribution is standard normal.  Stein's
classical identity states that if $N \sim \mathcal{N}(0,1)$ then
\begin{equation}\label{eq:2}
  \mathbb{E}[N g(N)] = \mathbb{E}[g'(N)] \mbox{ for all } g \mbox{
    such that } \mathbb{E}[|g'(N)|]<\infty. 
\end{equation}
By the Cauchy-Schwarz inequality we immediately deduce that, for all
appropriate $g$,
\begin{align}\label{eq:22}
\mathbb{E}[g'(N)]^2 =     \mathbb{E}[N g(N)]^2 \le 
\mathbb{E}[N^2]  \mathbb{E} \left[ g(N)^2 \right] \le \mathrm{Var}[g(N)], 
\end{align}
which gives the lower bound in \eqref{eq:99}. For the upper bound, still by the Cauchy-Schwarz inequality,
\begin{align}\label{eq:27}
\mathrm{Var}[g(N)] \le    \mathbb{E}\left[\left(\int_0^Ng'(x) \mathrm{d}x
  \right)^2\right] & \le  \mathbb{E}  \left[ N \int_0^N(g'(x))^2 \mathrm{d}x
  \right] 
   =  \mathbb{E}  \left[ (g'(N))^2 \right]
\end{align}
where the last identity is a direct consequence of Stein's identity
\eqref{eq:2} applied to the function
$g(x) = \int_0^x(g'(u))^2 \mathrm{d}u$. This is the upper bound in
\eqref{eq:99}.  The idea behind this proof is due to Chen \cite{ch82}.  As
is now well known (again, we refer the reader to Section
\ref{sec:stein-diff} for references and details), Stein's identity
\eqref{eq:2} for the normal distribution can be extended to basically
any univariate (and even multivariate) distribution via a family of
objects called ``Stein operators''. This leads to a wide variety of
Stein-type integration by parts identities and it is natural to wonder
whether Chen's approach can be used to obtain generalizations of
Klaassen's theorem. First steps in this direction are detailed in
\cite{LS12a,LS16}; in particular it is seen that general lower
variance bounds are easy to obtain from generalized Stein identities
in the same way as in \eqref{eq:22}. Nevertheless, the method of proof
in \eqref{eq:27} for the upper bound cannot be generalized to
arbitrary targets and, even in cases where the method does apply, the
assumptions under which the bounds hold are quite stringent. To the
best of our knowledge, the first to obtain upper variance bounds via
properties of Stein operators is due to Saumard \cite{saumard2018weighted}, by
combining generalized Stein identities -- expressed in terms of the
Stein kernel $\tau_p(x)$ -- with Hoeffding's identity
\eqref{eq:Hoefdingcov}.  The scope of Saumard's weighted Poincar\'e
inequalities is, nevertheless, limited and a general result such as
Klaassen's is, to this date, not available in the literature.

The main contributions of this paper can be categorized in  two types:
\begin{itemize}
\item \emph{Covariance identities and inequalities.}  The first main
  contribution of this paper is a generalization of Klaassen's
  variance bounds from Theorem \ref{thm:klabou} to covariance
  inequalities of arbitrary functionals of arbitrary {univariate}
  targets under minimal assumptions (see Theorems \ref{prop:cramerao}
  and \ref{thm:klaassenbounds}). Our results hereby therefore also
  contains basically the entire literature on the topic. Moreover, the
  weights that appear in our bounds bear a clear and natural
  interpretation in terms of Stein operators which allow for easy
  computation for a wide variety of targets, as illustrated in the
  different examples we tackle as well as in Tables \ref{tab:Ord1},
  \ref{tab:Ord2} and \ref{tab:Pearson} in which we provide explicit
  variance bounds for univariate target distributions belonging to the
  classical integrated Pearson and Ord families (see Example
  \ref{def:pearsord} for a definition). In particular, Klaassen's
  bounds now arise naturally in this setting.
\item \emph{Stein operators and their properties.}  The second main
  contribution of the paper lies in our method of proof, which
  contributes to the theory of Stein operators
  themselves. Specifically, we obtain several new probabilistic
  representations of inverse Stein operators (a.k.a.\ solutions to
  Stein equations) which open the way to a wealth of new manipulations
  which where hitherto unavailable. These representations also lead to
  new interpretations and ultimately new handles on several quantities
  which are crucial to the theory surrounding Stein's method
({such as} Stein kernels, Stein equations, Stein factors, {and} Stein
bounds). 
Finally the various objects we identify provide natural
connections with other topics of interest, including the well-known
connection with Sturm-Liouville theory already identified in
\cite{DZ91}. 
\end{itemize}

The paper is organised as follows.  Section \ref{sec:stein-diff}
contains the theoretical foundations of the paper. In Section
\ref{subsec:def} we recall the theory of canonical and standardized
Stein operators introduced in \cite{LRS16} and introduce a (new)
notion of inverse Stein operator (Definition
\ref{def:can_inverse}). We also identify minimal conditions under
which Stein-type probabilistic integration by parts formulas hold (see
Lemmas \ref{lma:skadj} and \ref{lem:stIBP1}). In Section
\ref{sec:three-repr-inverse}
we provide the representation formulas for the inverse Stein operator
(Lemmas \ref{lem:represnt} and \ref{lem:can_inverserep2}). In Section
\ref{sec:suff-cond} we clarify the conditions on the test functions
under which the different identities hold, and provide bridges with
the classical assumptions in the literature. Section
\ref{subsec:inverseOp} contains bounds on the solutions to the Stein
equations. Section \ref{sec:variance-expansion} contains the
covariance identities and inequalities. After re-interpreting
Hoeffding's identity \eqref{eq:Hoefdingcov} we obtain general and
flexible lower and upper \emph{covariance} bounds (Proposition
\ref{prop:cramerao} and Theorem \ref{thm:klaassenbounds}). We then
deduce Klaassen's bounds (Corollary \ref{cor:klaaaaaassen}) and
provide examples for several concrete distributions, with more
examples deferred to
the three tables mentioned above. Finally a discussion is provided in
Section \ref{sec:discussion}, wherein several examples are treated and
connections with other theories are established, for instance the
Brascamp-Lieb inequality (Corollary \ref{cor:brascamplieb}) and Menz
and Otto's asymmetric Brascamp-Lieb inequality (Corollary
\ref{cor:asbraslieb}), as well as the link with an eigenfunction
problem which can be seen as an extended Sturm-Liouville problem.
{The proofs from Section \ref{sec:suff-cond} are technical and
  postponed to the appendix \ref{sec:some-non-essential}.}
  
\section{Stein differentiation}
\label{sec:stein-diff}

Stein's method consists in a collection of techniques for
distributional approximation that was originally developed for normal
approximation in \cite{S72} and for Poisson approximation in
\cite{C75}; {for expositions see the books} 
\cite{Stein1986,BC05,BC05b,ChGoSh11,NP11} 
  {and}  the review papers
\cite{Re04,Ro11,chatterjee2014short}. Outside the Gaussian and Poisson
frameworks, there exist several non-equivalent general theories
allowing to setup Stein's method for large swaths of probability
distributions, of which we single out the papers
\cite{CS11,Do14,upadhye2014stein} for univariate distributions under
analytical assumptions, \cite{arras2017stein, arras2018stein} for
infinitely divisible distributions, \cite{barbour2018multivariate} for
discrete multivariate distributions, and
\cite{mackey2016multivariate,gorham2016measuring,gorham2017measuring}
as well as \cite{fang2018multivariate} for multivariate densities
under diffusive assumptions.

The backbone of the present paper consists in the approach from
\cite{ley2015distances,LRS16,MRS18}. Before introducing these results,
we fix the notations.  Let $\mathcal{X}{\in \mathcal{B}(\R)}$ and equip it with
some $\sigma$-algebra $\mathcal{A}$ and $\sigma$-finite measure
$\mu$. Let $X$ be a random variable on $\mathcal{X}$, with {induced probability
measure $\mathbb{P}^X$} which is absolutely continuous with respect to $\mu$; we
denote {by} $p$ the corresponding probability density, and its support
by $\mathcal{S}(p) = \left\{ x \in \mathcal{X} : p(x)>0\right\}$.  As
usual, $L^1(p)$ is the collection of all real valued functions $f$
such that $\mathbb{E}|f(X)| < \infty$. We sometimes call the
expectation under $p$ the $p$-mean. Although we could in principle
keep the discussion to come very general, in order to make the paper
more concrete and readable we shall restrict our attention to
distributions satisfying the following Assumption.

\

\noindent \textbf{Assumption A.} The measure $\mu$ is either the
counting measure on $\mathcal{X} = \Z$ or the Lebesgue measure on
$\mathcal{X} = \R$. If $\mu$ is the counting measure then there exist
$a {<} b \in \Z \cup \left\{-\infty, \infty \right\}$ such that
$\mathcal{S}(p) =  [a, b]\cap \Z$.  If $\mu$ is the Lebesgue measure
then there exist
$a, b \in \R\left\{-\infty, \infty \right\}$ such  that 
${\mathcal{S}(p)}^{\mathrm{o}} = ]a, b[$ and 
$\overline{\mathcal{S}(p)} = [a, b]$.
{Moreover, the measure $\mu$ is not point mass.}

\

{Here not allowing point mass much simplifies the presentation. Stein's method for point mass is available in \cite{reinert1995weak}.} 

    


\medskip Let $\ell \in \left\{ -1, 0, 1 \right\}$. In the sequel we
shall restrict our attention to the following three derivative-type
operators: 
\begin{eqnarray*}
  \Delta^{\ell}f(x) &= \left\{ 
\begin{array}{l l } 
  f'(x), &  \mbox{ if }\ell = 0;\\
  \frac{1}{\ell} (f(x+\ell) - f(x)) & \mbox{ if } \ell \in \{-1, +1\}, \\
 \end{array} \right. 
\end{eqnarray*} 
with $f'(x)$ the weak derivative defined Lebesgue almost everywhere,
$\Delta^{+1} ({\equiv} \Delta^+)$ the classical forward difference and
$\Delta^{-1} ({\equiv} \Delta^-)$ the classical backward
difference. Whenever $\ell=0$ we take $\mu$ as the Lebesgue measure
and speak of the \emph{continuous case}; whenever
$\ell \in \left\{-1, 1 \right\}$ we take $\mu$ as the counting measure
and speak of the \emph{discrete case}. There are two choices of
derivatives in the discrete case, only one in the continuous
case. {We let $\mathrm{dom}(\Delta^{\ell})$ denote the collection
  of functions $f : \R \to \R$ such that $\Delta^{\ell}f(x)$ exists
  and is finite $\mu$-almost surely. In the case $\ell=0$, this
  corresponds to all absolutely continuous functions; in the case
  $\ell = \pm1$ the domain is the collection of all functions on
  $\Z$.}  For ease of reference we note that, if
{$f \in \mathrm{dom}(\Delta^{\ell})$ is such that
  $\Delta^{\ell} f\, \mathbb{I}[a, b] \in L^1(\mu)$} then, for all
{$c, d$} such that $a \le c \le d\le b$ we have
\begin{align*}
  \int_c^d \Delta^\ell f(x)\, \mu(\mathrm{d}x) 
    = 
                                                \begin{cases}
                                                                                                    \int_{c}^d
                                                  f'(x) \mathrm{d}x  =
                                                  f(d) - f(c)  &
                                                  \mbox{ if } \ell =
                                                  0 \\
                                                  \sum_{j=c}^d
                                                  \Delta^-f(x)  =
                                                  f(d) - f(c-1)  &
                                                  \mbox{ if } \ell =
                                                  -1 \\
                                                  \sum_{j=c}^d
                                                  \Delta^+f(x)  =
                                                  f(d+1) - f(c)  &
                                                  \mbox{ if } \ell =
                                                  +1 
                                                \end{cases}                                 
\end{align*}
which we summarize as
\begin{equation}
  \label{eq:30}
  \int_c^d \Delta^\ell f(x) \mu(\mathrm{d}x) = 
   f(d+a_{\ell}) -  f(c-b_{\ell})
\end{equation}
where
\begin{equation}
  \label{eq:31}
 a_{\ell} 
  = \mathbb{I}[\ell=1] \mbox{ and } b_{\ell} =
\mathbb{I}[\ell = -1].
\end{equation}
{We stress the fact that the values at $c, d$ are understood as limits
if either is infinite.}

      \subsection{Stein operators and Stein equations} \label{subsec:def}
      
      Our first definitions come from \cite{LRS16}. {We first define
      $\mathrm{dom}(p, \Delta^{\ell})$ as the collection of
      $f : \R \to \R$ such that
      $f\,p \in \mathrm{dom}(\Delta^{\ell})$.}

\begin{defn}[Canonical Stein operators] \label{def:stei_op} Let
  $f \in \mathrm{dom}(p, \Delta^{\ell})$ and consider the linear
  operator $f \mapsto \mathcal{T}_p^{\ell}f$ defined as
  $$ \mathcal{T}_p^{\ell} f(x) = \frac{\Delta^\ell (f(x) p(x))}{p(x)}$$
  for all $x \in \mathcal{S}(p)$ and $ \mathcal{T}_p^{\ell} f(x) = 0$
  for $x \notin \mathcal{S}(p)$. {The} operator $\mathcal{T}_p^{\ell}$ is {called} 
  the canonical ($\ell$-)Stein operator of $p$.  The cases $\ell=1$
  and $\ell = -1$ provide the {forward} and {backward} Stein
  operators, denoted {by} $\mathcal{T}_p^+$ and $\mathcal{T}_p^-$,
  respectively; the case $\ell=0$ provides the differential Stein
  operator denoted by $\mathcal{T}_p$. 
\end{defn}

\noindent {To describe the domain and the range of $\mathcal{T}_p^{\ell}$ we
  introduce the following sets of functions:}
\begin{eqnarray*}
  \mathcal{F}^{(0)}(p) &=& \bigg\{ f \in L^1 (p) : \mathbb{E}[f(X)]= 0 \bigg\}; \\
  \mathcal{F}^{(1)}_\ell (p) &=& \left\{ f \in \mathrm{dom}(p, \Delta^\ell) : 
  \Delta^\ell(f p ) {\mathbb{I}[\mathcal{S}(p)]} \in L^1(\mu) \mbox{ and }
  \int_{\mathcal{S}(p)} \Delta^\ell (fp)(x)  \, \mu(\mathrm{d}x) 
  = \mathbb{E}\left[  \mathcal{T}_p^{\ell}f(X) \right]= 0 \right\}.
\end{eqnarray*}
{We draw the reader's attention to the fact that the second condition
in the definition of $\mathcal{F}^{(1)}_\ell (p)$ can be rewritten
\begin{equation*}
  f(b+a_{\ell})  p(b+a_{\ell}) =   f(a-b_{\ell})  p(a-b_{\ell}).
\end{equation*}

}
{The next lemma, which  follows immediately from the
  definition of $\mathcal{T}_p^{\ell} f$ and of the different sets of
  functions,  shows why $\mathcal{F}^{(1)}_\ell (p) $ is called 
  the {\it{canonical Stein class}}.}

\begin{lma}[Canonical Stein class]\label{def:can_class} 
For $f \in\mathcal{F}^{(1)}_\ell (p) $, $
\mathcal{T}_p^{\ell} f \in \mathcal{F}^{(0)}(p)$ . 
\end{lma}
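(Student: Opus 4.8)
The plan is to unwind the definitions and observe that the claim is essentially a tautology once one checks that the two defining conditions of $\mathcal{F}^{(1)}_\ell(p)$ are exactly what is needed to land in $\mathcal{F}^{(0)}(p)$. Recall that membership in $\mathcal{F}^{(0)}(p)$ requires two things of $\mathcal{T}_p^{\ell}f$: that it lie in $L^1(p)$, and that it have zero $p$-mean.

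First I would verify integrability. For $f\in\mathcal{F}^{(1)}_\ell(p)$ we are given that $\Delta^\ell(fp)\,\mathbb{I}[\mathcal{S}(p)]\in L^1(\mu)$. By the very definition of the canonical Stein operator, $\mathcal{T}_p^{\ell}f(x) = \Delta^\ell(f(x)p(x))/p(x)$ on $\mathcal{S}(p)$ and $0$ off $\mathcal{S}(p)$, so
\[
  \mathbb{E}\bigl[\,|\mathcal{T}_p^{\ell}f(X)|\,\bigr]
  = \int_{\mathcal{S}(p)} \frac{|\Delta^\ell(fp)(x)|}{p(x)}\,p(x)\,\mu(\mathrm{d}x)
  = \int_{\mathcal{S}(p)} |\Delta^\ell(fp)(x)|\,\mu(\mathrm{d}x) < \infty,
\]
which is finite precisely by the hypothesis. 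Hence $\mathcal{T}_p^{\ell}f\in L^1(p)$.

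Second, the zero-mean condition is immediate: the second defining property of $\mathcal{F}^{(1)}_\ell(p)$ states explicitly that $\int_{\mathcal{S}(p)}\Delta^\ell(fp)(x)\,\mu(\mathrm{d}x) = \mathbb{E}[\mathcal{T}_p^{\ell}f(X)] = 0$ (the equality of the integral with the expectation being the same cancellation of $p(x)$ as above, now justified by the integrability just established). Therefore $\mathbb{E}[\mathcal{T}_p^{\ell}f(X)] = 0$, so $\mathcal{T}_p^{\ell}f\in\mathcal{F}^{(0)}(p)$, as claimed. There is no genuine obstacle here; the only mild point of care is the bookkeeping that lets one replace $\Delta^\ell(fp)(x)/p(x)$ by $\Delta^\ell(fp)(x)$ under the integral against $p\,\mu$, together with the convention that $\mathcal{T}_p^{\ell}f$ vanishes outside $\mathcal{S}(p)$ (so the complement of the support contributes nothing and need not be worried about). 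This is exactly why the set $\mathcal{F}^{(1)}_\ell(p)$ was defined the way it was, and it is what justifies the name "canonical Stein class."
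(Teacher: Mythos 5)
Your proof is correct and follows the same route as the paper, which simply states that the lemma "follows immediately from the definition of $\mathcal{T}_p^{\ell}f$ and of the different sets of functions"; your two checks (integrability via cancellation of $p$ against the measure $p\,\mu$ on $\mathcal{S}(p)$, and the zero-mean condition read off directly from the second defining property of $\mathcal{F}^{(1)}_\ell(p)$) are exactly the unwinding the authors have in mind.
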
  
Crucially for the results in this paper, for all
{$f\in \mathrm{dom}(\Delta^\ell)$,
  {$g\in \mathrm{dom}(\Delta^{-\ell})$} such that
  $f(\cdot)g(\cdot - \ell) \in \mathrm{dom}(\Delta^\ell)$} the
operators $\Delta^{\ell}$ satisfy the product rule
\begin{equation}
  \label{eq:productrule} \Delta^{\ell}\big(f(x)g(x-\ell)\big) =( \Delta^{\ell} f(x) ) g
  (x) + f(x) \Delta^{-\ell}g(x)\end{equation}
for all
$\ell \in \left\{ -1, 0, 1 \right\}$. {This product rule leads to an
  integration by parts  (IBP) formula {(a.k.a.\ Abel-type  summation
    formula)} as follows.}

\begin{lma}[Stein IBP formula - version 1] \label{lma:skadj} For all
  {$f \in \mathrm{dom}(p, \Delta^\ell)$,
    $g \in \mathrm{dom}(\Delta^{-\ell})$} such that (i)
  $f(\cdot)g(\cdot - \ell) \in \mathcal{F}_{\ell}^{(1)}(p)$ and (ii)
  $f(\cdot) \Delta^{-\ell}g(\cdot) \in L^1(p)$ we have
\begin{equation}
  \label{eq:3}
\mathbb{E}\left[ (\mathcal{T}_p^{\ell}f(X) ) g(X) \right] = -
\mathbb{E} \left[ f(X) \Delta^{-\ell} g(X) \right]. 
\end{equation}
\end{lma}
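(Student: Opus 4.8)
The plan is to derive the integration by parts formula \eqref{eq:3} directly from the product rule \eqref{eq:productrule} combined with the defining property of the canonical Stein class $\mathcal{F}_\ell^{(1)}(p)$. First I would observe that, under assumption (i), the function $h(\cdot) := f(\cdot)g(\cdot-\ell)$ belongs to $\mathcal{F}_\ell^{(1)}(p)$, so by Lemma \ref{def:can_class} we have $\mathcal{T}_p^\ell h \in \mathcal{F}^{(0)}(p)$, which means precisely that $\mathbb{E}[\mathcal{T}_p^\ell h(X)] = 0$, i.e.
\begin{equation*}
  \int_{\mathcal{S}(p)} \Delta^\ell\big(f(x)g(x-\ell)p(x)\big)\, \mu(\mathrm{d}x) = 0.
\end{equation*}

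Next I would apply the product rule \eqref{eq:productrule}, but applied to the pair of functions $f(\cdot)p(\cdot)$ and $g(\cdot)$: since $\Delta^\ell\big((fp)(x)\, g(x-\ell)\big) = \big(\Delta^\ell(f(x)p(x))\big) g(x) + f(x)p(x)\,\Delta^{-\ell}g(x)$, dividing by $p(x)$ on $\mathcal{S}(p)$ gives
\begin{equation*}
  \frac{\Delta^\ell\big(f(x)g(x-\ell)p(x)\big)}{p(x)} = \big(\mathcal{T}_p^\ell f(x)\big) g(x) + f(x)\,\Delta^{-\ell}g(x)
\end{equation*}
for $x \in \mathcal{S}(p)$. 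Here one must check that $f(\cdot)p(\cdot) \in \mathrm{dom}(\Delta^\ell)$ (which holds since $f \in \mathrm{dom}(p,\Delta^\ell)$) and $g \in \mathrm{dom}(\Delta^{-\ell})$, and that $(fp)(\cdot)g(\cdot-\ell) = h(\cdot)p(\cdot) \in \mathrm{dom}(\Delta^\ell)$, which follows from $h \in \mathcal{F}_\ell^{(1)}(p) \subseteq \mathrm{dom}(p,\Delta^\ell)$. Integrating this pointwise identity over $\mathcal{S}(p)$ against $p\,\mu(\mathrm{d}x)$, the left-hand side integrates to $0$ by the previous display, while the right-hand side splits as $\mathbb{E}[(\mathcal{T}_p^\ell f(X))g(X)] + \mathbb{E}[f(X)\Delta^{-\ell}g(X)]$, yielding \eqref{eq:3}.

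The main obstacle is purely one of integrability bookkeeping: to legitimately split the integral of the right-hand side into the difference of two expectations, each piece must be separately integrable. Assumption (ii) gives $f(\cdot)\Delta^{-\ell}g(\cdot) \in L^1(p)$ directly. For the term $(\mathcal{T}_p^\ell f(\cdot))g(\cdot)$, I would argue that it is the difference of two $L^1(p)$ functions — namely $\Delta^\ell(fg(\cdot-\ell)p)/p$, which lies in $L^1(p)$ because $h \in \mathcal{F}_\ell^{(1)}(p)$ forces $\Delta^\ell(hp)\mathbb{I}[\mathcal{S}(p)] \in L^1(\mu)$, and $f\Delta^{-\ell}g$, which lies in $L^1(p)$ by (ii) — hence it too is in $L^1(p)$, and the splitting is justified by linearity of the integral. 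I would also note explicitly that the convention $\mathcal{T}_p^\ell f(x) = 0$ off $\mathcal{S}(p)$ means all expectations are effectively integrals over $\mathcal{S}(p)$, so no boundary contributions are lost; the vanishing of the boundary terms is exactly encoded in the condition $\int_{\mathcal{S}(p)}\Delta^\ell(hp)\,\mu(\mathrm{d}x) = 0$ defining membership in $\mathcal{F}_\ell^{(1)}(p)$ (equivalently $h(b+a_\ell)p(b+a_\ell) = h(a-b_\ell)p(a-b_\ell)$ via \eqref{eq:30}).
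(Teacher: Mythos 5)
Your proposal is correct and follows essentially the same route as the paper: apply the product rule \eqref{eq:productrule} to the pair $(fp,g)$ to obtain the pointwise identity $\mathcal{T}_p^{\ell}\big(f(\cdot)g(\cdot-\ell)\big)(x) = (\mathcal{T}_p^{\ell}f(x))g(x) + f(x)\Delta^{-\ell}g(x)$, use condition (i) to conclude the left-hand side has $p$-mean zero, and use condition (ii) to split the expectation on the right. Your explicit integrability bookkeeping (deducing $(\mathcal{T}_p^{\ell}f)g\in L^1(p)$ as a difference of two $L^1(p)$ functions) just spells out what the paper's proof leaves implicit.
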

\begin{proof}
 { Under the stated assumptions, we can apply  \eqref{eq:productrule}
  to get 
\begin{equation}
  \label{eq:90}
  \mathcal{T}_p^{\ell} \big(f(x) g(x-\ell)\big) = ( \mathcal{T}_p^{\ell}f(x) ) 
g(x) + f(x) (\Delta^{-\ell}g(x))
\end{equation}
for all $x \in \mathcal{S}(p)$.  Condition~(i) in the statement
guarantees that the left hand side (l.h.s.) of \eqref{eq:90} has mean
0, while condition~(ii) guarantees that we can separate the
expectation of the sum on the right hand side (r.h.s.) into the sum of
the individual expectations.}
\end{proof}

A natural interpretation of \eqref{eq:3} is that operator
$\mathcal{T}_p^{\ell}$ is, in some sense to be made precise, the
\emph{skew-adjoint} operator to $\Delta^{-\ell}$ with respect to the
scalar product
$\left\langle f, g \right\rangle = \mathbb{E} \left[ f(X) g(X)
\right]$; this {provides a supplementary justification} to the use
of the terminology ``canonical'' for operator
$\mathcal{T}_p^{\ell}$. {We discuss a consequence of this
  interpretation in Section~\ref{sec:discussion}. The conditions under
  which Lemma \ref{lma:skadj} holds are all but transparent. We
  clarify these assumptions in Section \ref{sec:suff-cond}.}
For more details on Stein class and operators, we refer to
\cite{LRS16} for the construction in an abstract setting,
\cite{ley2015distances} for the construction in the continuous setting
(i.e.\ $\ell = 0$) and \cite{ERSvb2} for the construction in the
discrete setting (i.e.\ $\ell \in \left\{ -1, 1
\right\}$). Multivariate extensions are developed  in \cite{MRS18}.


\medskip
The fundamental stepping stone for our theory is an
inverse of the canonical operator $\mathcal{T}_p^{\ell}$ provided in
the next definition.

\begin{defn}[Canonical pseudo inverse Stein
  operator]\label{def:can_inverse}
  Let $\ell \in \left\{ -1, 0, 1 \right\}$ and recall the notations
  $a_{\ell}, b_{\ell}$ from \eqref{eq:31}. The {canonical
    pseudo-inverse Stein operator} $ \mathcal{L}_p^{\ell}$ for the
  operator $\mathcal{T}_p^{\ell}$ is defined, for $h \in L^1(p)$, as
 \begin{equation}
    \label{eq:53}
    \mathcal{L}_p^{\ell}h(x) = \frac{1}{p(x)} \int_a^{x-a_{\ell}} (h(u)  -
    \mathbb{E}[h(X)])p(u) \mu(\mathrm{d}u) = \frac{1}{p(x)}
    \int_{x+b_{\ell}}^{b} (
    \mathbb{E}[h(X)]-h(u))p(u) \mu(\mathrm{d}u)  
\end{equation}
for all $x \in \mathcal{S}(p)$ and 
    $ \mathcal{L}_p^{\ell}h(x) = 0$ for all
    $x \notin \mathcal{S}(p)$.
  \end{defn}

  Equality between the second and third expressions in \eqref{eq:53}
  is justified because $h \in L^1(p)$ so that the integral of
  $h(\cdot) - \mathbb{E}[h(X)]$ over the whole support cancels out.
  For ease of reference we detail $\mathcal{L}_p^{\ell}$  in the
  three cases that interest us:
\begin{align*}
& \mathcal{L}_p^0h(x) = \frac{1}{p(x)} \int_a^x (h(u)  -
  \mathbb{E}[h(X)])p(u) \mathrm{d}u   = \frac{1}{p(x)} \int_x^b (
  \mathbb{E}[h(X)]-h(u))p(u) \mathrm{d}u   &(\ell=0) \nonumber\\
& \mathcal{L}_p^-h(x) =
\frac{1}{p(x)} \sum_{j=a}^{x} (h(j) - \mathbb{E}[h(X)]) p(j)  =
                    \frac{1}{p(x)} \sum_{j=x+1}^{b}
                    (\mathbb{E}[h(X)]-h(j)) p(j) &(\ell=-1)\nonumber \\
 & \mathcal{L}_p^+h(x) = \frac{1}{p(x)}
\sum_{j=a}^{x-1} (h(j) - \mathbb{E}[h(X)])p(j)   = \frac{1}{p(x)}
\sum_{j=x}^{b} (\mathbb{E}[h(X)]-h(j))p(j)  &(\ell=1). \nonumber
\end{align*}
Note that $\mathcal{L}_p^-h(b) = 0$ but
$\mathcal{L}_p^-h(a) = h(a) - \mathbb{E}[h(X)]$ and, conversely,
$\mathcal{L}_p^+h(a) = 0$ but
$\mathcal{L}_p^+h(b) = \mathbb{E}[h(X)] - h(b)$.  The denomination \emph{pseudo-inverse-Stein operator} for
$\mathcal{L}_p^{\ell}$ is justified by the following lemma whose proof
is immediate.  {\begin{lma}\label{lem:inverse} For any
    $h \in L^1(p)$,
    $\mathcal{L}^{\ell}_p h \in
    \mathcal{F}_{\ell}^{(1)}(p)$. Moreover, (i) for all $h \in L^1(p)$
    we have
    $ \mathcal{T}_{p}^{\ell}\mathcal{L}^{\ell}_ph(x) = h(x) -
    \mathbb{E}[h(X)]$ at all $x \in \mathcal{S}(p)$ and (ii) for all
    $f \in \mathcal{F}^{(1)}_{\ell}(p)$ we have
    $ \mathcal{L}^{\ell}_p\mathcal{T}_{p}^{\ell}f(x) = f(x) -
    \frac{f(a^+-b_{\ell})p(a^+-b_{\ell})}{p(x)} = f(x) -
    \frac{f(b^-+a_{\ell})p(b^-+a_{\ell})}{p(x)}$ at all
    $x \in \mathcal{S}(p)$. {Operator $\mathcal{L}_p^{\ell}$ is
      invertible (with inverse $\mathcal{T}_p^{\ell}$) on the subclass
      of functions in $\mathcal{F}^{(0)}(p) \cap \mathcal{F}^{(1)}(p)$
      which, moreover, satisfy $f(b^{-}+a_{\ell})p(b^{-}+a_{\ell}) =f(a^{+}-b_{\ell})p(a^+-b_{\ell})= 0$.}
\end{lma}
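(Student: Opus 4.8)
The plan is to verify the three assertions of Lemma~\ref{lem:inverse} by direct computation, using only the definitions of $\mathcal{T}_p^{\ell}$, $\mathcal{L}_p^{\ell}$, the formula \eqref{eq:30}, and the characterization of $\mathcal{F}_\ell^{(1)}(p)$ via the boundary condition $f(b+a_\ell)p(b+a_\ell) = f(a-b_\ell)p(a-b_\ell)$.

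First I would establish membership: for $h \in L^1(p)$, the function $f := \mathcal{L}_p^{\ell}h$ satisfies $f(x)p(x) = \int_a^{x-a_\ell}(h(u)-\mathbb{E}[h(X)])p(u)\,\mu(\mathrm{d}u)$ on $\mathcal{S}(p)$, so $fp \in \mathrm{dom}(\Delta^\ell)$ and $\Delta^\ell(fp)(x) = (h(x)-\mathbb{E}[h(X)])p(x)$ by the fundamental-theorem-of-calculus/telescoping content of \eqref{eq:30}; this is exactly assertion~(i), $\mathcal{T}_p^{\ell}\mathcal{L}_p^{\ell}h(x) = h(x)-\mathbb{E}[h(X)]$. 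Since $h \in L^1(p)$, we get $\Delta^\ell(fp)\,\mathbb{I}[\mathcal{S}(p)] \in L^1(\mu)$ and $\int_{\mathcal{S}(p)}\Delta^\ell(fp)\,\mu(\mathrm{d}x) = \mathbb{E}[h(X)] - \mathbb{E}[h(X)] = 0$, so $f \in \mathcal{F}_\ell^{(1)}(p)$ — equivalently the boundary values of $fp$ at the two endpoints agree (both equal $0$ from the lower-limit form and the total-integral-cancels form using $h \in L^1(p)$).

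Next, for assertion~(ii), take $f \in \mathcal{F}_\ell^{(1)}(p)$ and set $h := \mathcal{T}_p^{\ell}f$, which lies in $\mathcal{F}^{(0)}(p)$ by Lemma~\ref{def:can_class}, so $\mathbb{E}[h(X)] = 0$ and $\mathcal{L}_p^{\ell}h$ is well-defined. Then $\mathcal{L}_p^{\ell}\mathcal{T}_p^{\ell}f(x)\,p(x) = \int_a^{x-a_\ell}\Delta^\ell(fp)(u)\,\mu(\mathrm{d}u)$, and applying \eqref{eq:30} with $c = a$, $d = x-a_\ell$ gives $f(x-a_\ell+a_\ell)p(\cdot) - f(a-b_\ell)p(a-b_\ell) = f(x)p(x) - f(a^+-b_\ell)p(a^+-b_\ell)$, i.e. $\mathcal{L}_p^{\ell}\mathcal{T}_p^{\ell}f(x) = f(x) - f(a^+-b_\ell)p(a^+-b_\ell)/p(x)$; the alternative expression with $b^-+a_\ell$ follows identically from the third (upper-limit) form of \eqref{eq:53} together with \eqref{eq:30}, and the two agree precisely because $f \in \mathcal{F}_\ell^{(1)}(p)$ forces the boundary terms to coincide. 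The final claim — that $\mathcal{L}_p^{\ell}$ restricted to $\{f \in \mathcal{F}^{(0)}(p)\cap\mathcal{F}^{(1)}(p) : f(b^-+a_\ell)p(b^-+a_\ell) = f(a^+-b_\ell)p(a^+-b_\ell) = 0\}$ is a genuine two-sided inverse of $\mathcal{T}_p^{\ell}$ — is then immediate: on this subclass (ii) reads $\mathcal{L}_p^{\ell}\mathcal{T}_p^{\ell}f = f$, while (i) gives $\mathcal{T}_p^{\ell}\mathcal{L}_p^{\ell}h = h$ for $h \in \mathcal{F}^{(0)}(p)$.

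The only genuinely delicate point — and the one I would treat most carefully rather than wave through — is the handling of infinite endpoints: when $a = -\infty$ or $b = +\infty$, the boundary values $f(a^+-b_\ell)p(a^+-b_\ell)$ must be read as limits (as flagged after \eqref{eq:31}), and one must check that $h \in L^1(p)$ genuinely guarantees convergence of the defining integrals in \eqref{eq:53} and that the limiting boundary terms vanish; this is where the equivalence between the lower-limit and upper-limit forms of $\mathcal{L}_p^{\ell}$, and hence the self-consistency of the two formulas in (ii), actually gets used. Everything else is a routine bookkeeping exercise in tracking the shift parameters $a_\ell, b_\ell$ through \eqref{eq:30}, and I would present it compactly, doing the $\ell = 0$ case explicitly and noting that $\ell = \pm 1$ is the verbatim telescoping analogue.
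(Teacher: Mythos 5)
Your proof is correct and is exactly the argument the paper has in mind: the paper omits the proof entirely (declaring it ``immediate''), and your direct verification from the definitions of $\mathcal{T}_p^{\ell}$, $\mathcal{L}_p^{\ell}$ and \eqref{eq:30}, including the check that the two boundary expressions in (ii) coincide because $f\in\mathcal{F}^{(1)}_{\ell}(p)$, is the intended routine computation. Your flagging of the limit interpretation at infinite endpoints is a sensible addition but does not change the substance.
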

}

 Starting from
  \eqref{eq:90} we postulate the next definition.
\begin{defn}[Standardizations of the canonical
  operator]\label{def:stand-canon-oper}
  Fix $\ell \in \left\{ -1, 0, 1 \right\}$ and $\eta \in L^1(p)$. The
  {$\eta$-standardized Stein operator} is
\begin{align}
  \label{eq:33}
  \mathcal{A}_p^{\ell, \eta}g(x) & = \mathcal{T}^{\ell}_p\big(\mathcal{L}^{\ell}_p\eta(\cdot)
                                   g(\cdot-\ell)\big)(x) =  \big(\eta(x) - \mathbb{E}[\eta(X)]\big)  g(x) +
                                   \mathcal{L}^{\ell}_p\eta(x) 
                                   \big(\Delta^{-\ell}g(x)\big) 
\end{align}
acting on the collection $\mathcal{F}(\mathcal{A}_p^{\ell, \eta})$ of
test functions $g$ such that
{$ \mathcal{L}^{\ell}_p\eta(\cdot)g(\cdot - \ell) \in
  \mathcal{F}_{\ell}^{(1)}(p)$} and
$(\mathcal{L}^{\ell}_p\eta) \Delta^{-\ell}g \in L^1(p)$.
\end{defn}
\begin{rmk}
  The conditions appearing in the definition of
  $\mathcal{F}(\mathcal{A}_p^{\ell, \eta})$ are tailored to ensure that all
  identities and manipulations follow immediately. For instance, the
  requirement that
  {$\mathcal{L}^{\ell}_p\eta(\cdot)g(\cdot - \ell) \in
    \mathcal{F}_{\ell}^{(1)}(p)$} in the definition of
  $\mathcal{F}(\mathcal{A}_p^{\ell, \eta})$ guarantees that the resulting
  functions {$ \mathcal{A}_p^{\ell, \eta}g(x) $} have $p$-mean 0 and the
  condition $(\mathcal{L}^{\ell}_p\eta) \Delta^{-\ell}g \in L^1(p)$
  guarantees that the expectations of the individual summands on the
  r.h.s.\ of \eqref{eq:33} exist.
  Again, our assumptions are not transparent; we discuss them in
  detail in Section~\ref{sec:suff-cond}.
\end{rmk}

The final ingredient {for Stein differentiation} is the \emph{Stein
  equation}: {\begin{defn}[Stein equation] {Fix
      $\ell \in \left\{ -1, 0, 1 \right\}$ and $\eta \in L^1(p)$.} For
    ${h} \in L^1(p)$,
    the $\mathcal{A}_p^{\ell, \eta}$-Stein equation {for $h$} is the
    functional equation
    $\mathcal{A}_p^{\ell, \eta}g(x) = h(x) - \mathbb{E}[h(X)], x \in
    \mathcal{S}(p)$, i.e.\
\begin{equation}
  \label{eq:43}
  \big(\eta(x) - \mathbb{E}[\eta(X)]\big)  g(x) +
  \mathcal{L}^{\ell}_p\eta(x) 
  \big(\Delta^{-\ell}g(x)\big)  = h(x) - \mathbb{E}[h(X)], \,  x \in
  \mathcal{S}(p). 
\end{equation}
A solution to the Stein equation  is any function
$g \in \mathcal{F}(\mathcal{A}_p^{\ell, \eta})$ which satisfies
\eqref{eq:43} for all $x \in \mathcal{S}(p)$.
\end{defn}
}
Our notations lead immediately to the next result.
\begin{lma}[Solution to the Stein equation]\label{lem:steinsol} 
{Fix $\eta \in L^1(p)$.} The {Stein equation} \eqref{eq:43} {for $h \in L^1(p)$ is solved by}
\begin{equation}
  \label{eq:8}
 g_h^{p, \ell, \eta} (x) 
 = \frac{\mathcal{L}_p^{\ell} h(x+ \ell)}{\mathcal{L}_p^{\ell} 
 \eta(x+\ell)}
\end{equation}
with the convention that $g_h^{p, \ell, \eta}(x) = 0$ for all $x+\ell$ outside of
$\mathcal{S}(p)$.
\end{lma}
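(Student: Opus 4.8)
The plan is to recognise \eqref{eq:43} as nothing but the identity $\mathcal{A}_p^{\ell,\eta}g = h - \mathbb{E}[h(X)]$ and to unwind it through Definition \ref{def:stand-canon-oper}, where $\mathcal{A}_p^{\ell,\eta}g$ is by construction equal to $\mathcal{T}_p^{\ell}\big(\mathcal{L}_p^{\ell}\eta(\cdot)\,g(\cdot-\ell)\big)$. Thus solving the Stein equation amounts to producing a $g$ for which the inner function $\mathcal{L}_p^{\ell}\eta(\cdot)\,g(\cdot-\ell)$ is sent by $\mathcal{T}_p^{\ell}$ onto $h-\mathbb{E}[h(X)]$. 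Lemma \ref{lem:inverse}(i) hands us one such function for free, namely $\mathcal{L}_p^{\ell}h$, since $\mathcal{T}_p^{\ell}\mathcal{L}_p^{\ell}h(x)=h(x)-\mathbb{E}[h(X)]$ on $\mathcal{S}(p)$. The natural move is therefore to impose $\mathcal{L}_p^{\ell}\eta(x)\,g(x-\ell)=\mathcal{L}_p^{\ell}h(x)$ for every $x\in\mathcal{S}(p)$ and to solve for $g$; dividing by $\mathcal{L}_p^{\ell}\eta(x)$ (legitimate at every point where $\mathcal{L}_p^{\ell}\eta$ is non-zero, in particular on the interior of the support for the $\eta$ of interest) and replacing $x$ by $x+\ell$ yields precisely \eqref{eq:8}.

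Then I would verify this candidate directly. Applying the product rule \eqref{eq:productrule}, in the form \eqref{eq:90}, to $f=\mathcal{L}_p^{\ell}\eta$ and this $g$ gives
\[
\mathcal{T}_p^{\ell}\big(\mathcal{L}_p^{\ell}\eta(x)\,g(x-\ell)\big)=\big(\mathcal{T}_p^{\ell}\mathcal{L}_p^{\ell}\eta(x)\big)\,g(x)+\mathcal{L}_p^{\ell}\eta(x)\,\Delta^{-\ell}g(x),
\]
and Lemma \ref{lem:inverse}(i) rewrites $\mathcal{T}_p^{\ell}\mathcal{L}_p^{\ell}\eta(x)=\eta(x)-\mathbb{E}[\eta(X)]$, so the right-hand side is exactly the left-hand side of \eqref{eq:43} (this is just \eqref{eq:33} read in reverse). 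On the other hand, by the imposed relation $\mathcal{L}_p^{\ell}\eta(x)\,g(x-\ell)=\mathcal{L}_p^{\ell}h(x)$, the left-hand side above equals $\mathcal{T}_p^{\ell}\mathcal{L}_p^{\ell}h(x)$, which is $h(x)-\mathbb{E}[h(X)]$ by the same lemma. Combining the two evaluations gives \eqref{eq:43} for all $x\in\mathcal{S}(p)$, as required.

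What remains — and what I expect to be the only genuinely fiddly point — is the boundary bookkeeping together with the membership $g\in\mathcal{F}(\mathcal{A}_p^{\ell,\eta})$. The convention $g_h^{p,\ell,\eta}(x)=0$ whenever $x+\ell\notin\mathcal{S}(p)$ is harmless: the values $x\in\mathcal{S}(p)$ with $x-\ell\notin\mathcal{S}(p)$ are exactly the endpoint(s) at which $\mathcal{L}_p^{\ell}\eta$ and $\mathcal{L}_p^{\ell}h$ both vanish (for instance $\mathcal{L}_p^{-}\eta(b)=\mathcal{L}_p^{-}h(b)=0$ when $\ell=-1$), so the relation $\mathcal{L}_p^{\ell}\eta(x)\,g(x-\ell)=\mathcal{L}_p^{\ell}h(x)$ reads $0=0$ there and leaves the corresponding value of $g$ free, which the convention merely pins down; one then checks separately that \eqref{eq:43} still holds at those endpoints (at such a point the $\mathcal{L}_p^{\ell}\eta(x)\,\Delta^{-\ell}g(x)$ contribution drops out and the equation reduces to an identity one reads off directly from Definition \ref{def:can_inverse}). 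For the membership, $\mathcal{L}_p^{\ell}\eta(\cdot)g(\cdot-\ell)=\mathcal{L}_p^{\ell}h$ lies in $\mathcal{F}_{\ell}^{(1)}(p)$ by Lemma \ref{lem:inverse}, and the integrability requirement $(\mathcal{L}_p^{\ell}\eta)\,\Delta^{-\ell}g\in L^1(p)$ is of precisely the type treated systematically in Section \ref{sec:suff-cond}; via \eqref{eq:33} it reduces to $(\eta-\mathbb{E}[\eta(X)])\,g\in L^1(p)$, since $h-\mathbb{E}[h(X)]\in L^1(p)$ by hypothesis.
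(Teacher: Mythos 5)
Your proposal is correct and follows essentially the same route as the paper: substitute $g(x-\ell)=\mathcal{L}_p^{\ell}h(x)/\mathcal{L}_p^{\ell}\eta(x)$ so that $\mathcal{L}_p^{\ell}\eta(\cdot)g(\cdot-\ell)=\mathcal{L}_p^{\ell}h(\cdot)$, then apply Lemma \ref{lem:inverse}(i) to get $h-\mathbb{E}[h(X)]$, with membership in $\mathcal{F}(\mathcal{A}_p^{\ell,\eta})$ holding by construction. Your additional product-rule verification is redundant given the first computation, and your boundary and integrability remarks simply make explicit what the paper leaves implicit.
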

\begin{proof}
With $g = g_h^{p, \ell, \eta}$, 
\begin{align*}
  \mathcal{A}_p^{\ell, \eta}g (x) 
 &= 
  \mathcal{T}^{\ell}_p\big(\mathcal{L}^{\ell}_p\eta(\cdot)
                      g(\cdot-\ell)\big)(x) 
                      =
                     \mathcal{T}^{\ell}_p\left(\mathcal{L}^{\ell}_p\eta(\cdot)
                      \frac{\mathcal{L}_p^{\ell} h(\cdot)}{\mathcal{L}_p^{\ell} 
 \eta(\cdot)} \right)(x) 
   =
                     \mathcal{T}^{\ell}_p\left(
                     \mathcal{L}_p^{\ell} h(\cdot) \right)(x) \\ 
 &= h(x) - \mathbb{E}[h(X)]
 \end{align*} 
 using Lemma \ref{lem:inverse} for the last step. Hence \eqref{eq:43}
 is satisfied for all $x \in \mathcal{S}(p)$. Since, by construction,
 $g \in \mathcal{F}(\mathcal{A}_{p }^{\ell, \eta})$, the claim follows.
\end{proof}

When the context is clear then we drop the superscripts and the
subscript in $g$ of \eqref{eq:8}.  Before proceeding we provide {two}
examples. {The notation $\mathrm{Id}$ refers to the identity function
  $x \rightarrow \mathrm{Id}(x) = x$.}

\begin{exm}[Binomial distribution]\label{ex:binom1}
  Let $p(x) = \binom{n}{x}\theta^x(1-\theta)^{n-x}$ be the binomial
  density with parameters $(n, \theta)$ and
  $\mathcal{S}(p) = [0,n]\cap \N$; assume that $0 < \theta < 1$.
  Stein's method for the binomial distribution was first developed in
  \cite{ehm1991binomial} {using $\Delta^{-}$}; see also
  \cite{soon1996binomial, Ho04}.

  \medskip Picking $\ell = 1$, the class $\mathcal{F}_+^{(1)}(p)$
  consists of functions $f : \Z \to \R$ which are bounded on
  $\mathcal{S}(p)$ and $f(0) = 0$. Fixing $\eta(x) = x-n\theta $ gives
  $ \mathcal{L}_{\mathrm{bin}(n, \theta)}^+ \eta(x) = -(1-\theta)x$
  leading to
    \begin{align}
      \label{eq:34}
      \mathcal{A}^{+, \mathrm{Id}}_{\mathrm{bin}(n, \theta)}g(x) =  (x-n\theta)g(x) - (1-\theta)x\Delta^-g(x)
    \end{align}
    with corresponding class
    $ \mathcal{F}\left(\mathcal{A}^{+, \mathrm{Id}}_{\mathrm{bin}(n, \theta)}\right)$ which contains
    all functions $g : \Z \to \R$. 
    The solution to the
    $\mathcal{A}^{+, \mathrm{Id}}_{\mathrm{bin}(n, \theta)}$-Stein
    equation (see \eqref{eq:43}) is
     \begin{equation*}
       g^+(x) = \frac{-1}{(1-\theta)(x+1)p(x+1)} \sum_{j=0}^{x} (h(j) -
       \mathbb{E}[h(X)]) p(j) \mbox{ for all }  0 \le x \le n-1 
     \end{equation*}
     and $g^+(n) = 0$.

     \medskip Picking $\ell = -1$, the class $\mathcal{F}_-^{(1)}(p)$
     consists of functions $f : \Z \to \R$ which are bounded on
     $\mathcal{S}(p)$ and such that $f(n) = 0$. Again fixing
     $\eta(x) = x-n\theta$ gives
     $\mathcal{L}_{\mathrm{bin}(n, \theta)}^- \eta(x) = -\theta(n-x)$
     leading to
    \begin{align}
      \mathcal{A}^{-, \mathrm{Id}}_{\mathrm{bin}(n, \theta)}g(x) = 
      (x-n\theta)g(x) -
       \theta(n-x)\Delta^+g(x) 
       \nonumber
    \end{align}
    acting on the same class as \eqref{eq:34}. The solution to the
    $\mathcal{A}^{-, \mathrm{Id}}_{\mathrm{bin}(n, \theta)}$-Stein equation is
     \begin{equation*}
 g^-(x) = \frac{-1}{\theta(n-(x-1))p(x-1)} \sum_{j=0}^{x-1} (h(j) -
       \mathbb{E}[h(X)]) p(j) \mbox{ for all }  1 \le x \le n
     \end{equation*}
     and $g^-(0) = 0$. The function $-g^-$ is studied in
     \cite{ehm1991binomial} where bounds on $\| \Delta^-g^-\|$ are
     provided (see equation (10) in that paper); see also Section
     \ref{subsec:inverseOp} where bounds on $\|g^-\|$ are provided.
 
\end{exm}

\begin{exm}[Beta distribution]\label{ex:beta1}
  Let $p(x) = x^{\alpha-1}(1-x)^{\beta-1}/B(\alpha, \beta)$ be the
  beta density with parameters $(\alpha, \beta)$ and
  $\mathcal{S}(p) = (0,1)$. Stein's method for the beta distribution
  was developed in \cite{goldstein2013stein,Do14} {using the Stein operator 
  $\mathcal{A}f(x) = x(1-x) f'(x) + (\alpha(1-x) - \beta x) f(x)$}.  In our notations,
  we have $\ell = 0$ and $\mathcal{F}^{{(1)}}_0(p)$ consists of functions
  $f: \R \to \R$ such that $f(0^+)p(0^+) = f(1^-)p(1^-)$ and
  $|(f p)'|$ is Lebesgue integrable on $[0,1]$. Fixing
  $\eta(x) = x - \frac{\alpha}{\alpha+\beta}$ gives
  $ \mathcal{L}_{\mathrm{beta}(\alpha, \beta)}\eta(x) =
  -\frac{x(1-x)}{\alpha + \beta}$ leading to the operator
    \begin{align*}
      \mathcal{A}^{\mathrm{Id}}_{\mathrm{Beta}(\alpha, \beta)}g(x) =
      \Big(x-\frac{\alpha}{\alpha+\beta}\Big) g(x) -\frac{
      x(1-x)}{\alpha+\beta}  g'(x)
    \end{align*}
    with {domain}
    $
    \mathcal{F}\left(\mathcal{A}^{\mathrm{Id}}_{\mathrm{Beta}(\alpha,
        \beta)}\right)$ {the set of} differentiable functions
    $g : \R \to \R$ such that $x(1-x) g(x) \in \mathcal{F}^{{(1)}}_0(p)$
    and $x(1-x) g'(x) \in L^1(p)$. The solution to the
    $ \mathcal{A}^{\mathrm{Id}}_{\mathrm{Beta}(\alpha, \beta)}$-Stein
    equation is
     \begin{equation*}
       g(x) = \frac{-(\alpha+\beta)}{x(1-x)p(x)} \int_0^x (h(u) -
       \mathbb{E}[h(X)]) p(u)du, \quad x \in (0,1). 
     \end{equation*}
     {The operator
       $ \mathcal{A}^{\mathrm{Id}}_{\mathrm{Beta}(\alpha, \beta)}f$}
     is, up to multiplication by $\alpha+\beta$, the classical Stein
     operator $\mathcal{A}f$ for the beta density, see
     \cite{goldstein2013stein, Do14} for details and bounds on
     solutions and their derivatives. See also Section
     \ref{subsec:inverseOp} where bounds on $\|g\|$ are provided.
\end{exm}

In order to propose a more general example, we recall the concept of a
Stein kernel, here extended to continuous and discrete distributions
alike.
  \begin{defn}[The Stein kernel]\label{def:kernel}
    Let $X\sim p$ have finite mean. The ($\ell$-)Stein kernel of $X$ (or
    of $p$) is the function
    \begin{equation*}
      \tau_p^{\ell}(x) = -\mathcal{L}_p^{\ell}(\mathrm{Id})(x). 
    \end{equation*}
Metonymously, we refer to the random variable $\tau_p^{\ell}(X)$ as
the ($\ell$-)Stein kernel of $X. $
\end{defn}

\begin{rmk}
  The function $\tau_p^{\ell}(\cdot)$ is studied in detail for
  $\ell = 0$ in \cite[Lecture VI]{Stein1986}. {This function is}
  particularly useful for Pearson (and discrete Pearson a.k.a.\ Ord)
  distributions which are characterized by the fact that their Stein
  kernel $\tau_p^{\ell}$ is a second degree polynomial, see Example
  \ref{def:pearsord}. For more on this topic, {we also refer to
    forthcoming \cite{ERSvb2} as well as
    \cite{courtade2017existence,fathi2018stein, fathi2018higher}
    wherein important
       contributions to the theory of Stein
       kernels are provided in a multivariate setting.  }
\end{rmk}

{The next example {gives some ($\ell$-)Stein kernels, exploiting}
  the fact that if the mean of $X$ is $\nu$, then
  $ \mathcal{L}_p^{\ell}(\mathrm{Id})(x) =
  \mathcal{L}_p^{\ell}(\mathrm{Id} - \nu)(x).$ }

\begin{exm}\label{ex:binom2} 
If $X \sim \mathrm{Bin}(n, \theta)$ then {using $\eta(x) = x - n \theta$, Example \ref{ex:binom1}} gives 
$\tau_{\mathrm{bin}(n, \theta)}^{+}(x) = (1-\theta)x$ and
$\tau_{\mathrm{bin}(n, \theta)}^{-}(x) = \theta(n-x)$. If $X \sim
\mathrm{Beta}(\alpha, \beta)$ then {Example \ref{ex:beta1} with $\eta(x) = x - \frac{\alpha}{\alpha+\beta} $ gives} $\tau_{\mathrm{Beta}(\alpha,
  \beta)}^0(x) = x(1-x)/(\alpha+\beta)$.   
\end{exm}

\begin{exm}[A general example]\label{ex:genera}
  Let $p$ satisfy Assumption A and suppose that it has finite mean
  $\nu$.  Fixing $\eta(x) = x - \nu$, operator \eqref{eq:33} becomes
    \begin{align*}
      \mathcal{A}_p^{\tau_p^{\ell}}g(x) =
      \big(x-\nu\big) g(x) - \tau_p^{\ell}(x) \Delta^{-\ell}g(x)
    \end{align*}
    with corresponding class
    $ \mathcal{F}\bigg(\mathcal{A}_p^{\tau_p^{\ell}}\bigg)$ which
    contains all functions $g : \R \to \R$ such that
    $\tau_p^{\ell}({\cdot}) g ({\cdot -\ell})\in
    \mathcal{F}^{(1)}_{\ell}(p)$ and
    $\tau_p^{\ell}\Delta^{-\ell}g \in L^1(p)$.  Again, we stress that
    such conditions are clarified in Section
    \ref{sec:suff-cond}. {Using Lemma \ref{lem:steinsol}, the}
    solution to the $ \mathcal{A}_p^{\tau_p}$ Stein equation is
     \begin{equation*}
       g_{\mathrm{Id}}^{p, \ell,h}(x) = \frac{-\mathcal{L}_p^{\ell}h(x
         {+ \ell} )}{\tau_p^{\ell}(x{+ \ell})}. %
\end{equation*}
Bounds on $\|g\|$ are provided in Section \ref{sec:suff-cond}.
Stein's method based on $\mathcal{A}_p^{\tau_p^{\ell}}$ is already
available in several important subcases, e.g.\ in
\cite{S01,KuTu11, Do14} for continuous distributions.
\end{exm}

The construction is tailored to ensure that all operators have mean 0
over the entire classes of functions on which they are defined. We
immediately deduce the following family of Stein integration by parts
formulas:
\begin{lma}[Stein IBP formula - version 2]
\label{lem:stIBP1}
Let $X \sim p$. Then
 \begin{align}
   \label{eq:21}
  & \mathbb{E} \left[ - {{ \{}}  \mathcal{L}_p^{\ell}f(X) {{ \}}}\Delta^{-\ell}g(X)
    \right] = \mathbb{E} 
  \left[( f(X) { - \mathbb{E}[f(X)]}) g(X) \right]
 \end{align}
 for all $f \in L^1(p)$ and all $g \in \mathrm{dom}(\Delta^{-\ell})$
 such that $\mathcal{L}_p^{\ell}f(\cdot)g(\cdot - \ell) \in \mathcal{F}_{\ell}^{(1)}(p)$
 and $ {\mathcal{L}_p^{\ell} f(\cdot) \Delta^{-\ell} g(\cdot)} \in
 L^1(p)$. 
\end{lma}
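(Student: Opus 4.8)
The plan is to apply the version-1 Stein IBP formula (Lemma~\ref{lma:skadj}) with the function $f$ there replaced by $\mathcal{L}_p^{\ell}f$, since the point of Definition~\ref{def:can_inverse} and Lemma~\ref{lem:inverse} is precisely that $\mathcal{L}_p^{\ell}f$ is a canonical antiderivative of $f - \mathbb{E}[f(X)]$. First I would record that, for $f \in L^1(p)$, Lemma~\ref{lem:inverse} gives $\mathcal{L}_p^{\ell}f \in \mathcal{F}_{\ell}^{(1)}(p) \subset \mathrm{dom}(p,\Delta^{\ell})$ together with the key identity $\mathcal{T}_p^{\ell}\mathcal{L}_p^{\ell}f(x) = f(x) - \mathbb{E}[f(X)]$ for all $x \in \mathcal{S}(p)$. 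Thus $\mathcal{L}_p^{\ell}f$ is a legitimate input to Lemma~\ref{lma:skadj}.

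Next I would check that the two hypotheses of Lemma~\ref{lma:skadj} are met for the pair $(\mathcal{L}_p^{\ell}f, g)$. Hypothesis~(i) requires $\mathcal{L}_p^{\ell}f(\cdot)\,g(\cdot - \ell) \in \mathcal{F}_{\ell}^{(1)}(p)$, which is exactly the first assumption imposed in the statement of Lemma~\ref{lem:stIBP1}. Hypothesis~(ii) requires $\mathcal{L}_p^{\ell}f(\cdot)\,\Delta^{-\ell}g(\cdot) \in L^1(p)$, which is the second assumption in the statement. Hence Lemma~\ref{lma:skadj} applies verbatim and yields
\begin{equation*}
  \mathbb{E}\left[ \bigl(\mathcal{T}_p^{\ell}\mathcal{L}_p^{\ell}f(X)\bigr)\, g(X) \right] = - \mathbb{E}\left[ \mathcal{L}_p^{\ell}f(X)\, \Delta^{-\ell} g(X) \right].
\end{equation*}
Substituting the identity $\mathcal{T}_p^{\ell}\mathcal{L}_p^{\ell}f(X) = f(X) - \mathbb{E}[f(X)]$ (valid on $\mathcal{S}(p)$, hence $p$-almost surely) into the left-hand side and rearranging gives exactly \eqref{eq:21}.

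There is essentially no obstacle here beyond bookkeeping: the lemma is a direct corollary of Lemma~\ref{lma:skadj} once one recognizes that $\mathcal{L}_p^{\ell}f$ plays the role of ``$f$'' in that earlier lemma. The only mild subtlety worth a sentence is that $\mathcal{T}_p^{\ell}\mathcal{L}_p^{\ell}f$ and $f - \mathbb{E}[f(X)]$ agree only on $\mathcal{S}(p)$ (both operators being set to vanish off the support in the relevant definitions), but since all expectations are under $p$ this discrepancy is invisible, so the substitution into the $p$-expectation is legitimate. I would write the proof in three short lines: cite Lemma~\ref{lem:inverse} for membership and the identity, invoke Lemma~\ref{lma:skadj} under the two stated hypotheses, and substitute.
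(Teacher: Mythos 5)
Your proof is correct and matches the paper's argument in substance: the paper's one-line proof invokes the fact that the operators in \eqref{eq:33} have zero $p$-mean, which is exactly the statement that Lemma~\ref{lma:skadj} applies to the pair $(\mathcal{L}_p^{\ell}f, g)$ combined with the identity $\mathcal{T}_p^{\ell}\mathcal{L}_p^{\ell}f = f - \mathbb{E}[f(X)]$ from Lemma~\ref{lem:inverse}. Your version simply spells out this mechanism explicitly, including the correct observation that the two hypotheses of Lemma~\ref{lma:skadj} specialize verbatim to the hypotheses of Lemma~\ref{lem:stIBP1}.
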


\begin{proof}
  Identity \eqref{eq:21} follows directly from the Stein product rule
  in \cite[Theorem 3.24]{LRS16} or by using the fact that expectations
  of the operators {in} \eqref{eq:33} are equal to 0.  
\end{proof}

We stress the fact that in the formulation of Lemma~\ref{lem:stIBP1}
the test functions $f$ and $g$ do not play a symmetric role.  If
$g\in L^1(p)$  then the right hand side of
\eqref{eq:21} is the covariance $\mathrm{Cov}(f(X), g(X))$. We shall
use this heavily in our future developments. Similarly as for Lemma
\ref{lma:skadj}, the conditions under which Lemma~\ref{lem:stIBP1}
applies are not transparent in their present form.
{In} Section \ref{sec:suff-cond} various explicit sets of
conditions {are provided} under which the IBP \eqref{eq:21} is applicable.

\subsection{Representations of the inverse Stein operator}
\label{sec:three-repr-inverse}

This
section contains the first main results of the paper, namely
probabilistic representations for this operator. Such representations
are extremely useful for manipulations of the operators.
We start with a simple rewriting {of $\mathcal{L}_p^{\ell}$}. Given
$\ell \in \left\{ -1, 0, 1 \right\}$, recall the notation
$a_{\ell} = \mathbb{I}[\ell = 1]$ and define
 \begin{equation}
   \label{eq:kerneldeff}
   \chi^{\ell}(x, y) = 
   \mathbb{I}{[x \le y - a_\ell]}. 
 \end{equation}
 Such generalized indicator functions particularize, in the three
 cases that interest us, to $\chi^0(x, y) = \mathbb{I}[x \le y]$
 $\big(\ell = 0\big)$, $\chi^-(x, y) = \mathbb{I}[{x \le y}]$
 $\big(\ell = -1\big)$ and $ \chi^+(x, y) = \mathbb{I}[{x < y}] $
 $\big(\ell = 1\big)$.  Their properties lead to some form of
 ``calculus'' which shall be useful in the sequel. 
 \begin{lma}[Chi calculation rules] \label{lem:chirules} {The function
     $\chi^{\ell} (x,y)$ is non-increasing in $x$ and non-decreasing
     in $y$.} For all $x, y$ we have
\begin{align}\label{chirule1}
\chi^{\ell} (x,y) + \chi^{-\ell}(y,x) &= 1 + \mathbb{I}[\ell =0]
                                          \mathbb{I}[x=y] 
\end{align}
Moreover,
\begin{align}\label{eq:76}
\chi^\ell (u,y)  \chi^\ell (v,y) =  \chi^\ell ({\max(u,v)},y)  \mbox{ and } 
 \chi^\ell (x,u) \chi^\ell (x,v)  =  \chi^\ell (x,{\min(u,v)}).
\end{align}
\end{lma}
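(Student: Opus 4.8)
The plan is to read everything off the definition $\chi^{\ell}(x,y) = \mathbb{I}[x \le y - a_\ell]$, $a_\ell = \mathbb{I}[\ell=1]$, remembering that $x,y \in \R$ when $\ell = 0$ and $x,y \in \Z$ when $\ell = \pm 1$. Monotonicity is immediate: $a_\ell$ does not depend on $x$ or $y$, so the predicate $\{x \le y - a_\ell\}$ is destroyed as $x$ increases and created as $y$ increases; hence $x \mapsto \chi^\ell(x,y)$ is non-increasing and $y \mapsto \chi^\ell(x,y)$ is non-decreasing.

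For the reflection identity \eqref{chirule1} I would treat the three values of $\ell$ separately. For $\ell = 0$ one has $a_0 = 0$, so the left-hand side is $\mathbb{I}[x\le y] + \mathbb{I}[y\le x]$, which equals $1$ off the diagonal and $2$ on it, i.e.\ $1 + \mathbb{I}[x=y]$; since $\mathbb{I}[\ell=0]=1$ this matches the right-hand side. For $\ell = 1$ one has $a_1 = 1$ and $a_{-1} = 0$, so the left-hand side is $\mathbb{I}[x \le y-1] + \mathbb{I}[y \le x]$; because $x,y$ are integers, $\{x \le y-1\} = \{x < y\}$ is the complement of $\{y \le x\}$, so the sum is $1 = 1 + \mathbb{I}[\ell=0]$. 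The case $\ell = -1$ is the mirror image: $a_{-1}=0$, $a_1=1$, and $\mathbb{I}[x\le y] + \mathbb{I}[y < x] = 1$.

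For the two product formulas in \eqref{eq:76}, the idea is simply that a product of $\{0,1\}$-valued indicators is the indicator of the intersection of the underlying events: $\chi^\ell(u,y)\chi^\ell(v,y) = \mathbb{I}[\, u \le y - a_\ell \text{ and } v \le y - a_\ell\,] = \mathbb{I}[\max(u,v) \le y - a_\ell] = \chi^\ell(\max(u,v),y)$, and dually $\chi^\ell(x,u)\chi^\ell(x,v) = \mathbb{I}[\,x \le u - a_\ell \text{ and } x \le v - a_\ell\,] = \mathbb{I}[x \le \min(u,v) - a_\ell] = \chi^\ell(x,\min(u,v))$.

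I do not expect a genuine obstacle. The only points that reward a moment of attention are the case $\ell = 0$ of \eqref{chirule1}, where the diagonal $\{x=y\}$ is precisely where the two half-line indicators overlap and so accounts for the extra summand $\mathbb{I}[\ell=0]\mathbb{I}[x=y]$, and the cases $\ell = \pm 1$, where one uses integrality of $x,y$ to rewrite $x \le y-1$ as the strict inequality $x < y$ and thereby see that the two events partition the line with no overlap.
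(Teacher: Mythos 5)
Your proof is correct and is exactly the direct verification from the definition $\chi^{\ell}(x,y)=\mathbb{I}[x\le y-a_{\ell}]$ that the lemma calls for; the paper itself states the lemma without proof, treating it as immediate. Your handling of the only delicate points — the diagonal overlap producing the extra term $\mathbb{I}[\ell=0]\mathbb{I}[x=y]$ when $\ell=0$, and the use of integrality to rewrite $x\le y-1$ as $x<y$ when $\ell=\pm1$ — is precisely right.
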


Let $p$ with support $\mathcal{S}(p)$ satisfy Assumption A.  Then for
 any $f \in L^1(p)$ {it is easy to check  from the definition \eqref{eq:53} that
  \begin{align}
\label{eq:32}
    \mathcal{L}^{\ell}_pf(x) &  = \frac{1}{p(x)}\mathbb{E} \left[
   \chi^{\ell}( X, x)\big(f(X)- \mathbb{E}[f(X)]\big)\right]\\ 
    &    = \frac{1}{p(x)}\mathbb{E} \left[
   \big(\chi^{\ell}( X, x) - \mathbb{E}[\chi^{\ell}( X, x)]\big) \big(f(X)- \mathbb{E}[f(X)]\big)\right].\nonumber   
  \end{align}
  }
Next, define
\begin{align}
  \label{eq:29}
  \Phi^{\ell}_p(u, x, v) = \frac{\chi^{\ell}(u, x)\chi^{-\ell}(x, v)}{p(x)}
\end{align}
for all $x \in \mathcal{S}(p)$ and 0 elsewhere.  {This function is
  used in} the following representation formula for the Stein inverse
operator:
 \begin{lma}[Representation formula I]\label{lem:represnt}
   Let $X, X_1, X_2$ be independent copies of $X \sim p$ with support
   $\mathcal{S}(p)$.  Then, for all
   $f \in L^1(p)$ we have
\begin{equation}
  \label{eq:reprsntform}
  -\mathcal{L}_p^{\ell}f(x)  =  \mathbb{E} \left[
       (f(X_2)-f(X_1)) 
\Phi^{\ell}_p(X_1, x,  X_2) 
     \right]. 
\end{equation}
 \end{lma}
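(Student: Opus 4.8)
The plan is to start from the rewriting \eqref{eq:32}, which gives
\[
-\mathcal{L}_p^{\ell}f(x) = \frac{1}{p(x)}\,\mathbb{E}\!\left[\chi^{\ell}(X,x)\big(\mathbb{E}[f(X)]-f(X)\big)\right].
\]
The idea is to ``unfold'' the centering $\mathbb{E}[f(X)]-f(X)$ using an independent copy: since $X_1$ is independent of $X$ and distributed as $X$, we have $\mathbb{E}[f(X)]-f(X_1) = \mathbb{E}\big[f(X_2)-f(X_1)\,\big|\,X_1\big]$ where $X_2$ is a further independent copy. More directly, writing the right-hand side above with the dummy variable renamed $X_1$, I would insert $f(X_2)-f(X_1) = (f(X_2)-\mathbb{E}[f(X)]) + (\mathbb{E}[f(X)]-f(X_1))$ and observe that the term involving $f(X_2)-\mathbb{E}[f(X)]$ has conditional expectation zero given $X_1$, so it can be added for free. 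This is the one genuine idea in the proof; everything else is bookkeeping with the chi-functions.

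Next I would identify the second chi-factor. In the expression $\frac{1}{p(x)}\mathbb{E}\big[\chi^{\ell}(X_1,x)(f(X_2)-f(X_1))\big]$ the factor $\chi^{\ell}(X_1,x)$ only constrains $X_1$; to produce the symmetric kernel $\Phi^{\ell}_p(X_1,x,X_2) = \chi^{\ell}(X_1,x)\chi^{-\ell}(x,X_2)/p(x)$ I need to supply the missing factor $\chi^{-\ell}(x,X_2)$. The trick is to write the integrand once with $(X_1,X_2)$ in the given order and once with the roles swapped, add, and halve — or, more cleanly, to split $f(X_2)-f(X_1)$ into the two centered pieces as above and treat each separately. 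For the piece $\chi^{\ell}(X_1,x)(\mathbb{E}[f(X)]-f(X_1))$ I recover exactly $-p(x)\mathcal{L}^{\ell}_pf(x)$ by \eqref{eq:32}. For the piece $\chi^{\ell}(X_1,x)(f(X_2)-\mathbb{E}[f(X)])$, since it vanishes in expectation over $X_2$, I am free to subtract from it the quantity $\chi^{\ell}(X_1,x)\,\mathbb{E}_{X_2}[\,\cdots\,]$; the cleanest route is instead to use the complementarity rule \eqref{chirule1}, $\chi^{\ell}(X_1,x) = 1 - \chi^{-\ell}(x,X_1) + \mathbb{I}[\ell=0]\mathbb{I}[X_1=x]$, so that after relabelling the roles of $X_1$ and $X_2$ the two contributions reassemble into the single term $\mathbb{E}\big[(f(X_2)-f(X_1))\chi^{\ell}(X_1,x)\chi^{-\ell}(x,X_2)\big]/p(x)$.

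Concretely, the steps in order are: (1) expand $-\mathcal{L}^{\ell}_pf(x)$ via \eqref{eq:32} with integration variable $X_1$; (2) symmetrise by noting $-\mathcal{L}^{\ell}_pf(x) = \tfrac12\mathbb{E}\big[(\chi^{\ell}(X_1,x)-\chi^{\ell}(X_2,x))(f(X_2)-f(X_1))\big]/p(x)$, using independence and identical distribution of $X_1,X_2$ together with the fact that the cross terms $\chi^{\ell}(X_1,x)f(X_1)$ etc.\ average correctly; (3) apply \eqref{chirule1} to rewrite $\chi^{\ell}(X_1,x)-\chi^{\ell}(X_2,x)$ in terms of $\chi^{-\ell}(x,\cdot)$, killing the $1$'s and, in the continuous case $\ell=0$, checking that the diagonal correction $\mathbb{I}[X_1=x]-\mathbb{I}[X_2=x]$ contributes nothing since $X_i$ has a density; (4) collect the surviving product $\chi^{\ell}(X_1,x)\chi^{-\ell}(x,X_2)$, recognise it as $p(x)\Phi^{\ell}_p(X_1,x,X_2)$, and conclude. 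I would also remark that all interchanges of expectation are justified because $f\in L^1(p)$ and the chi-factors are bounded by $1$, and that for $x\notin\mathcal{S}(p)$ both sides are zero by convention.

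The main obstacle is the bookkeeping at step (2)–(3): getting the symmetrisation right so that exactly the desired product of chi-functions survives, and verifying that the $\ell=0$ diagonal term $\mathbb{I}[x=y]$ from \eqref{chirule1} is harmless. Neither is deep — the first is an algebraic identity in four terms, the second follows because $\mathbb{P}[X_i = x]=0$ under a density — but they are the only places where care is needed; the rest is a direct substitution into definitions \eqref{eq:53}, \eqref{eq:kerneldeff} and \eqref{eq:29}.
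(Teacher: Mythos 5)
Your proof is correct and rests on exactly the same ingredients as the paper's --- the rewriting \eqref{eq:32}, the complementarity rule \eqref{chirule1}, independence of the copies, and the negligibility of the diagonal term when $\ell=0$ --- so it is essentially the same argument, merely run in the opposite direction: the paper starts from the right-hand side of \eqref{eq:reprsntform}, factors it by independence and collapses it to $-\mathbb{E}[f(X)\chi^{\ell}(X,x)]/p(x)$ using $\mathbb{E}[f(X)]=0$, whereas you start from \eqref{eq:32} and reassemble the product $\chi^{\ell}(X_1,x)\chi^{-\ell}(x,X_2)$ via the symmetrization $\tfrac12\mathbb{E}\left[(\chi^{\ell}(X_1,x)-\chi^{\ell}(X_2,x))(f(X_2)-f(X_1))\right]$ and a swap of $X_1,X_2$. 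Both computations check out; the one step worth making fully explicit in your write-up is the four-term identity $\chi^{\ell}(X_1,x)-\chi^{\ell}(X_2,x)=\chi^{\ell}(X_1,x)\chi^{-\ell}(x,X_2)-\chi^{\ell}(X_2,x)\chi^{-\ell}(x,X_1)$, valid almost surely since $\chi^{\ell}(u,x)+\chi^{-\ell}(x,u)=1$ off the ($\ell=0$, probability-zero) diagonal, which together with the antisymmetry under exchanging $X_1$ and $X_2$ is what turns your symmetrized difference into the desired product.
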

\begin{proof}
  The $L^1(p)$ condition on $f$ suffices for the expectation on the
  r.h.s. of \eqref{eq:reprsntform} to be finite for all
  $x \in \mathcal{S}(p)$.  Suppose without loss of generality that
  $\mathbb{E}[f(X)]=0$. 
   Using that $X_1, X_2$ are i.i.d., we reap
{\begin{align*}
& \mathbb{E} \big[
       (f(X_2)-f(X_1)) 
       \chi^{\ell}(X_1, x)  \chi^{-\ell}(x, X_2)
     \big]  \\
& = \mathbb{E} \big[      
       \chi^{\ell}(X_1, x)
     \big]  \mathbb{E}[ f(X_2)\chi^{-\ell}(x, X_2)]-
 \mathbb{E}[ f(X_1)\chi^{\ell}(X_1, x)]                 \mathbb{E} \big[ 
       \chi^{-\ell}(x, X_2)
     \big] \\
&   =  \mathbb{E}\big[\chi^{\ell}(X_1, x)\big]  \mathbb{E} \big[
       f(X_2) 
       \big(1- \chi^{\ell}(X_2, x)\big)
                \big] -
  \mathbb{E} \big[
       f(X_1)
       \chi^{\ell}(X_1, x)
     \big]  \mathbb{E}[\chi^{-\ell}(x, X_2)]\\
  & = - \mathbb{E} \big[
       f(X)
       \chi^{\ell}(X, x)
     \big]  \big(\mathbb{E}[\chi^{\ell}(X,
    x)]+\mathbb{E}[\chi^{-\ell}(x, X)]\big), 
\end{align*}
where in the third line we used the fact that
$\mathbb{E}[f(X) \mathbb{I}[\ell=0] \mathbb{I}[X=x]]=0$ under the
stated assumptions. For the same reasons, we have
$ \mathbb{E} [\chi^\ell(X, x) + \chi^{-\ell}(x, X) ] = 1$ for all
$x\in \mathcal{X}$ and all $\ell \in \left\{ -1, 0, 1 \right\}$.  The
conclusion follows {by} recalling \eqref{eq:32}.}
\end{proof}


The function defined in \eqref{eq:29} allows to perform
``probabilistic integration'' as follows: if
$f \in \mathrm{dom}(\Delta^{-\ell})$ is such that
$(\Delta^{-\ell}f)$ is integrable on $[x_1, x_2] \cap \mathcal{S}(p)$ 
then
\begin{equation}
  \label{eq:28}
f(x_2)-f(x_1) =  \mathbb{E}\left[   \Phi^{\ell}_p(x_1, X, x_2)
  \Delta^{-\ell}f(X)\right] =
\begin{cases}
  \int_{x_1}^{x_2}f'(u) \mathrm{d}u  & (\ell = 0) \\
  \sum_{j=x_1}^{x_2-1} \Delta^+f(j)  & (\ell = -1) \\
  \sum_{j=x_1+1}^{x_2} \Delta^-f(j)  & (\ell = 1) 
\end{cases}
\end{equation}
for all $x_1 < x_2 \in \mathcal{S}(p)$.
If, furthermore,  $f \in
L^1(p)$ then {(by a conditioning argument)}
\begin{equation*}
  \mathbb{E}[(f(X_2) - f(X_1)) \mathbb{I}[X_1<X_2]] = \mathbb{E}\left[
    \Phi^{\ell}_p(X_1, X, X_2) \Delta^{-\ell}f(X)\right]. 
\end{equation*}
{Equation \eqref{eq:28} leads to the}  next representation formula for
the inverse Stein operator. 

\begin{lma}[Representation formula II]\label{lem:can_inverserep2}
  Let $X \sim p$. Define {the kernel $K_p$ on
    $\mathcal{S}(p)\times\mathcal{S}(p)$ by}
\begin{equation}
  \label{eq:15}
  K_p^{\ell}(x, x')  = 
\mathbb{E} [\chi^{\ell}(X, x)
    \chi^{\ell}(X, x') \big]  -
    \mathbb{E} [\chi^{\ell}(X, x)\big] \mathbb{E}\big[\chi^{\ell}(X, x')
    \big]. 
  \end{equation}
  Then $K_p^{\ell}(x, x')$ is symmetric and positive. Moreover, for
  all $f \in \mathrm{dom}(\Delta^{-\ell})$ such that $f \in L^1(p)$
  we have,
  \begin{equation}
\label{eq:24}
    -\mathcal{L}_p^{\ell}f(x)  = \mathbb{E}\left[ \frac{
                                  K_p^{\ell}(X, x)}{p(X)p(x)}
                                  \Delta^{-\ell} f(X) \right].
  \end{equation}
\end{lma}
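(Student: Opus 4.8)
The plan is to prove the three assertions—symmetry, positivity, and the representation \eqref{eq:24}—in that order, since the first two are essentially bookkeeping while the third is where the real work lies. Symmetry of $K_p^{\ell}(x,x')$ is immediate from \eqref{eq:15}: swapping $x$ and $x'$ leaves both the joint expectation $\mathbb{E}[\chi^{\ell}(X,x)\chi^{\ell}(X,x')]$ and the product of expectations unchanged. For positivity, I would recognize the right-hand side of \eqref{eq:15} as $\mathrm{Cov}\big(\chi^{\ell}(X,x),\chi^{\ell}(X,x')\big)$, i.e. the covariance kernel of the family of random variables $\{\chi^{\ell}(X,x)\}_{x\in\mathcal{S}(p)}$; since $\chi^{\ell}(\cdot,x)$ and $\chi^{\ell}(\cdot,x')$ are both monotone (indeed non-increasing) in the argument $X$, Chebyshev's association inequality gives $K_p^{\ell}(x,x')\ge 0$. (Alternatively one writes $K_p^{\ell}(x,x')=\int\int (\mathbb{I}[s\le\min(x,x')-a_\ell]-\cdots)$ and observes it is a Gram-type kernel, but the covariance-of-monotone-functions argument is cleanest and also delivers the "positive kernel" claim in the positive-semidefinite sense if that is intended.)

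For the representation \eqref{eq:24}, the starting point is Representation formula I, Lemma~\ref{lem:represnt}, which states
\[
-\mathcal{L}_p^{\ell}f(x) = \mathbb{E}\left[(f(X_2)-f(X_1))\,\Phi^{\ell}_p(X_1,x,X_2)\right]
\]
with $X_1,X_2$ i.i.d.\ copies of $X$. The plan is to insert the "probabilistic integration" identity \eqref{eq:28}, which rewrites the increment $f(X_2)-f(X_1)$ (when $X_1<X_2$) as $\mathbb{E}[\Phi^{\ell}_p(X_1,X,X_2)\,\Delta^{-\ell}f(X)\mid X_1,X_2]$ with $X$ a further independent copy. One must first reduce to the region $X_1<X_2$: using the $\chi$-calculus of Lemma~\ref{lem:chirules}, the factor $\Phi^{\ell}_p(X_1,x,X_2)=\chi^{\ell}(X_1,x)\chi^{-\ell}(x,X_2)/p(x)$ already forces $X_1$ and $X_2$ onto opposite sides of $x$ (up to the diagonal, which is $\mu$-null in the continuous case and handled by the $a_\ell$ convention in the discrete case), so effectively only pairs with $X_1<X_2$ contribute. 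Substituting \eqref{eq:28} and invoking Fubini (justified by $f\in L^1(p)$, which makes the triple integral absolutely convergent) yields
\[
-\mathcal{L}_p^{\ell}f(x) = \mathbb{E}\Big[\Phi^{\ell}_p(X_1,x,X_2)\,\Phi^{\ell}_p(X_1,X,X_2)\,\Delta^{-\ell}f(X)\Big],
\]
where the expectation is over the three independent copies $X_1,X,X_2$.

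The final step is to carry out the expectation over $X_1$ and $X_2$ while holding $X$ fixed, and to identify the result as $K_p^{\ell}(X,x)/(p(X)p(x))$. Writing out the product of the two $\Phi^{\ell}_p$ factors, the $X_1$-dependence is $\chi^{\ell}(X_1,x)\chi^{\ell}(X_1,X)=\chi^{\ell}(X_1,\min(x,X))$ by \eqref{eq:76}, and the $X_2$-dependence is $\chi^{-\ell}(x,X_2)\chi^{-\ell}(X,X_2)=\chi^{-\ell}(\max(x,X),X_2)$ by the dual of \eqref{eq:76}; the denominators contribute $1/(p(x)p(X))$. Taking expectations over the independent $X_1,X_2$ gives $\mathbb{E}[\chi^{\ell}(X_1,\min(x,X))]\,\mathbb{E}[\chi^{-\ell}(\max(x,X),X_2)]$, and one now uses \eqref{chirule1} ($\chi^{-\ell}(y,\cdot)=1-\chi^{\ell}(\cdot,y)$ off the diagonal) together with the identity $\min(x,X)+\max(x,X)$-bookkeeping to rearrange this into $\mathbb{E}[\chi^{\ell}(X',x)\chi^{\ell}(X',X)]-\mathbb{E}[\chi^{\ell}(X',x)]\mathbb{E}[\chi^{\ell}(X',X)]$ for a generic copy $X'$, which is exactly $K_p^{\ell}(X,x)$ by \eqref{eq:15}. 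I expect this last algebraic reduction—matching the min/max products against the explicit covariance form of $K_p^{\ell}$ and correctly absorbing the diagonal $\mathbb{I}[\ell=0]\mathbb{I}[X_1=X_2]$ terms—to be the main obstacle; everything upstream is a matter of citing Lemmas~\ref{lem:represnt}, \ref{lem:chirules} and equation~\eqref{eq:28} and justifying one application of Fubini via the $L^1(p)$ hypothesis.
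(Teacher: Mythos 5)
Your proposal is correct and follows essentially the same route as the paper: insert the probabilistic integration identity \eqref{eq:28} into Representation formula I, apply Fubini under the $L^1(p)$ hypothesis, and evaluate $\mathbb{E}\big[\Phi^{\ell}_p(X_1,x,X_2)\Phi^{\ell}_p(X_1,x',X_2)\big]$ via the $\chi$-calculus \eqref{eq:76} and \eqref{chirule1} to recognize $K_p^{\ell}(x,x')/(p(x)p(x'))$. The only (minor) divergence is that you obtain positivity from Chebyshev's association inequality for the two non-increasing functions $\chi^{\ell}(\cdot,x)$, $\chi^{\ell}(\cdot,x')$, whereas the paper reads it off the explicit factorization $K_p^{\ell}(x,x')=\mathbb{E}[\chi^{\ell}(X,\min(x,x'))]\,\mathbb{E}[\chi^{-\ell}(\max(x,x'),X)]$ — a factorization you in any case rederive for the final identification.
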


\begin{proof}
  Symmetry of $K_p^{\ell}$ is immediate. To see that it is positive,
  {applying first \eqref{eq:76} and then \eqref{chirule1},}
\begin{align}
  K_p^{\ell}(x, x') 
  &= \mathbb{E} \left[ \chi^{\ell}(X, \min(x, x'))
  \right] \bigg(1-  \mathbb{E} \left[ \chi^{\ell}(X, \max(x, x'))
  \right] \bigg) \nonumber 
\\ &{=\mathbb{E} \left[ \chi^{\ell}(X, \min(x, x'))
  \right] \mathbb{E} \left[ \chi^{-\ell}(\max(x, x'),X) \right] }   \label{eq:1}
\end{align}
which is necessarily positive.  To prove \eqref{eq:24}, we insert
\eqref{eq:28} into \eqref{eq:reprsntform}, to obtain
\begin{align*}
  -\mathcal{L}_p^{\ell}f(x)  & = \mathbb{E} \left[ \Delta^{-\ell}f(X') \Phi^{\ell}_p(X_1,
                               X', X_2) 
                               \Phi^{\ell}_p(X_1, x, X_2)  \right] \\
  & =  \mathbb{E} \left[ \Delta^{-\ell}f(X')
    \mathbb{E} \left[ \Phi^{\ell}_p(X_1, 
                               X', X_2) 
                               \Phi^{\ell}_p(X_1, x, X_2) \, | \, X{'} \right] \right].
\end{align*}
For all $x, x' \in \mathcal{S}(p)$, by \eqref{eq:76}, 
\begin{align*}
  \mathbb{E} \left[ \Phi^{\ell}_p(X_1, x, X_2) \Phi^{\ell}_p(X_1, x', X_2) \right] 
  &  = \frac{1}{p(x) p(x')} \mathbb{E}[\chi^{\ell}(X, x)\chi^{\ell}(X, x')]
  \mathbb{E}[\chi^{-\ell}(x, X) \chi^{-\ell}(x',X)] 
  \\
  & =\frac{1}{p(x) p(x')} \left(\mathbb{E} \left[ \chi^{\ell}(X, \min(x, x'))
  \right] \mathbb{E} \left[ \chi^{-\ell}(\max(x, x'),X) \right] \right).
\end{align*}
Using \eqref{eq:1}, we 
  recognize the kernel
$K_p^{\ell}(x, x')$ in the numerator, and identity \eqref{eq:24}
follows.
\end{proof}


\begin{exm} Representations \eqref{eq:reprsntform} and \eqref{eq:24}
  can easily be applied to obtain representations for the Stein kernel
  $\tau_p^{\ell}(x)$:
  \begin{align*}
    \tau_p^{\ell}(x)&  = -\mathcal{L}_p^{\ell}(\mathrm{Id})(x)  
                      = \mathbb{E} \left[ (X_2-X_1) \Phi^{\ell}_p(X_1, x,  X_2) \right]
                      =\mathbb{E}\left[ \frac{K_p^{\ell}(X,
                      x)}{p(X)p(x)} \right]. 
  \end{align*}
  In particular the Stein kernel is positive {on}
  $\mathcal{S}(p)$.
\end{exm}

Identity \eqref{eq:reprsntform} seems to be new, although it is
present in non-explicit form in \cite[Equation (4.16)]{CS11}.
Representation \eqref{eq:24} is, in the continuous $\ell = 0$ case,
already available in \cite{saumard2018weighted}. The kernel
$K_p^{\ell}(x, x')$ is a classical object in the theory of covariance
representations and inequalities; an early appearance is attributed by
\cite{rao2006matrix} to \cite{hoffding1940masstabinvariante} (see
\cite[pp 57--109]{hoeffding2012collected} for an English translation).
The {perhaps not very surprising} extension to the discrete
{case} is, to the best of our knowledge, new.  

\medskip

As a first {result from our set-up}, 
\eqref{eq:24} applied to the function
$f(x) = \mathcal{T}_p^{\ell}1(x)$ immediately gives the following:
\begin{prop}[Menz-Otto formula]\label{prop:ottomenz1}
  Suppose that the constant function $1$ belongs to
  $\mathcal{F}_\ell^{(1)}(p)$, that
  $-\Delta^{-\ell}\mathcal{T}_p^{\ell}1(x)>0$ for almost all
  $x \in \mathcal{S}(p)$ and $\Delta^{-\ell}(\mathcal{T}_p^{\ell}1)
  \in L^1(\mu)$. 
  Then, for every $x'\in \mathcal{S}(p)$, the function
  \begin{equation}
    \label{eq:96}
    p^{\star}_{x'}(x) = \frac{K_p^{\ell}(x, x')}{p(x')}
    \bigg(-\Delta^{-\ell}\mathcal{T}_p^{\ell}1(x)\bigg) 
  \end{equation}
  is a density on $\mathcal{S}(p)$ {with respect to $\mu$}. 
\end{prop}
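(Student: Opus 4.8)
The plan is to verify the two defining properties of a $\mu$-density separately: nonnegativity of $p^\star_{x'}$ on $\mathcal{S}(p)$, and $\int_{\mathcal{S}(p)}p^\star_{x'}\,\ud\mu=1$. Nonnegativity I would dispose of at once: by Lemma~\ref{lem:can_inverserep2} the kernel $K_p^\ell(x,x')$ is nonnegative (formula \eqref{eq:1} exhibits it as a product of two probabilities), the factor $-\Delta^{-\ell}\mathcal{T}_p^\ell 1(x)$ is positive for $\mu$-a.e.\ $x\in\mathcal{S}(p)$ by hypothesis, and $p(x')>0$ since $x'\in\mathcal{S}(p)$; measurability in $x$ is clear. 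Hence $p^\star_{x'}\ge 0$ $\mu$-a.e.

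For the normalization the key observation is that $p^\star_{x'}(x)\,\mu(\ud x)$ is, up to sign, precisely the measure integrated on the right-hand side of Representation Formula~II. I would apply \eqref{eq:24} with $f:=\mathcal{T}_p^\ell 1$ evaluated at the point $x'$. This $f$ is admissible: $f\in L^1(p)$ because $\mathbb{E}|f(X)|=\int_{\mathcal{S}(p)}|\Delta^\ell(1\cdot p)|\,\ud\mu<\infty$, which is part of the hypothesis $1\in\mathcal{F}_\ell^{(1)}(p)$; and $\Delta^{-\ell}f\in L^1(\mu)$ is the third hypothesis, which together with $0\le K_p^\ell\le 1$ also makes the right side of \eqref{eq:24} absolutely convergent since $\mathbb{E}\big[\tfrac{K_p^\ell(X,x')}{p(X)p(x')}\,|\Delta^{-\ell}f(X)|\big]\le p(x')^{-1}\int_{\mathcal{S}(p)}|\Delta^{-\ell}f|\,\ud\mu<\infty$. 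Unfolding the expectation on the right of \eqref{eq:24} as $\int_{\mathcal{S}(p)}(\cdots)\,p(x)\,\mu(\ud x)$ cancels the two factors $p(x)$ and, since $\Delta^{-\ell}\mathcal{T}_p^\ell 1(x)=-\bigl(-\Delta^{-\ell}\mathcal{T}_p^\ell 1(x)\bigr)$, turns it into $-\int_{\mathcal{S}(p)}p^\star_{x'}(x)\,\mu(\ud x)$; the left side of \eqref{eq:24} is $-\mathcal{L}_p^\ell(\mathcal{T}_p^\ell 1)(x')$, so $\int_{\mathcal{S}(p)}p^\star_{x'}\,\ud\mu=\mathcal{L}_p^\ell(\mathcal{T}_p^\ell 1)(x')$. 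Since $1\in\mathcal{F}_\ell^{(1)}(p)$, Lemma~\ref{lem:inverse}(ii) evaluates this to $1-p(a^+-b_\ell)/p(x')$, and the boundary mass $p(a^+-b_\ell)$ vanishes under the present assumptions (automatically so when $\mathcal{S}(p)$ is unbounded below, in particular in the Menz--Otto case $\mathcal{S}(p)=\R$), leaving $\int_{\mathcal{S}(p)}p^\star_{x'}\,\ud\mu=1$.

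The computation is short; the points that need genuine care are all in the previous paragraph, namely checking that $f=\mathcal{T}_p^\ell 1$ meets the hypotheses of Lemma~\ref{lem:can_inverserep2} --- which is exactly what the assumptions ``$1\in\mathcal{F}_\ell^{(1)}(p)$'' and ``$\Delta^{-\ell}\mathcal{T}_p^\ell 1\in L^1(\mu)$'' are for --- and confirming that the boundary term $p(a^+-b_\ell)$ supplied by Lemma~\ref{lem:inverse}(ii) really drops out. The one slick step I would flag is the cancellation of the two $p(x)$ factors; everything else is direct substitution into the representation formula.
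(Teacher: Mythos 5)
Your proposal is correct and follows essentially the same route as the paper: both apply Representation Formula II \eqref{eq:24} to $f=\mathcal{T}_p^{\ell}1$ and use $\mathcal{L}_p^{\ell}\mathcal{T}_p^{\ell}1\equiv 1$ to read off the normalization after cancelling the $p(x)$ factors. You are in fact slightly more careful than the paper, which silently takes the boundary term $p(a^{+}-b_{\ell})/p(x')$ from Lemma~\ref{lem:inverse}(ii) to be zero, whereas you flag it and justify why it vanishes.
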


\begin{proof} 
{From \eqref{eq:24} with $f(x) = \mathcal{T}_p^{\ell}1(x) = \frac{\Delta^\ell(p(x))}{p(x)}$ and $ \mathcal{L}_p^{\ell}f(x)  =1$, 
\begin{align*}  
  1 &= - \mathbb{E}\left[ \frac{K_p^{\ell}(X, x)}{p(X)p(x)}
                                  \Delta^{-\ell}  \mathcal{T}_p^{\ell}1(X) \right] 
    = 
    \int_a^b  \frac{K_p^{\ell}(u, x)}{p(u)p(x)}
    \left( {-} \Delta^{-\ell}  \mathcal{T}_p^{\ell}1(u) \right) p(u) \mu(du) \\
    &= 
    \int_a^b  \frac{K_p^{\ell}(u, x)}{p(x)}
    \left( {-} \Delta^{-\ell}  \mathcal{T}_p^{\ell}1(u) \right)  \mu(du)
    = \int_a^b  p^{\star}_{x'}(x) \mu(dx).
                                  \end{align*} 
                       Since $0 \le K_p^{\ell}(x, x') \le 1$,
                       {the integral exists because
                         $-\Delta^{-\ell}\mathcal{T}_p^{\ell}1(x) 
   \frac{K_p^{\ell}(x, x)}{p(x)} \in
  L^1(p)$}
                                   and,} by
                                assumption,
                                $
                                -\Delta^{-\ell}\mathcal{T}_p^{\ell}1(x)
                                > 0$. Hence the assertion follows.
\end{proof}

\begin{rmk} If $ \frac{K_p^{\ell}(x, x)}{p(x)}$ is bounded, then
    the assumptions in Proposition \ref{prop:ottomenz1} are satisfied
    as soon as $-\Delta^{-\ell}\mathcal{T}_p^{\ell}1 \in L^1({\mu})$. The
    proposition thus applies when $\ell = 0$ and $p(x) = e^{-H(x)}$
    with $H$ a strictly convex function {such that
      $\lim_{x\to\pm \infty} H'(x) = 0$}. This puts us in the context
    studied by \cite{menz2013uniform} and formula \eqref{eq:96} is
    equivalent to their \cite[Equation (14)]{menz2013uniform}; we
    return to this in Section
    \ref{sec:variance-expansion}. 
\end{rmk}

\medskip

{The next proposition gives some properties of $K_p^{\ell}(x,x')$.} 

  \begin{prop}
    (i) {It holds that for all $x, x' \in \mathcal{S}(p),$ that}
    $K_p^{\ell}(x, x')\le{K_p^{\ell}( \min(x, x'), \min(x,
      x'))}$. (ii) If $\mathbb{E}[\chi^{\ell}(X, x)]/p(x)$ is non
    decreasing, {then the function
      $x \mapsto {K_p^{\ell}(x, x')}/{p(x)}$ is non-decreasing for
      $x < x'$. (iii) If $\mathbb{E}[\chi^{-\ell}(x,X)]/p(x)$ is non
      increasing, then the function
      $x \mapsto {K_p^{\ell}(x, x')}/{p(x)}$ is non-increasing for
      $x > x'$. }
  

  \end{prop}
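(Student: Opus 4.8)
The plan is to reduce all three statements to elementary monotonicity arguments starting from the product representation \eqref{eq:1}. Write $F^{\ell}(x) := \mathbb{E}[\chi^{\ell}(X,x)]$ and $\overline{F}^{\ell}(x) := \mathbb{E}[\chi^{-\ell}(x,X)]$. By Lemma \ref{lem:chirules} the function $\chi^{\ell}(\cdot,\cdot)$ is non-decreasing in its second argument and non-increasing in its first, so $F^{\ell}$ is non-decreasing, $\overline{F}^{\ell}$ is non-increasing, and both are non-negative (being expectations of indicators). In this notation \eqref{eq:1} reads
\begin{equation*}
  K_p^{\ell}(x,x') = F^{\ell}(\min(x,x'))\,\overline{F}^{\ell}(\max(x,x'))
\end{equation*}
for all $x,x'\in\mathcal{S}(p)$, and this factorisation is the only structural input needed.

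For part (i), assume without loss of generality that $x\le x'$, so $\min(x,x')=x$ and $\max(x,x')=x'$. Since $\overline{F}^{\ell}$ is non-increasing we have $\overline{F}^{\ell}(x')\le\overline{F}^{\ell}(x)$, and multiplying this inequality by $F^{\ell}(x)\ge 0$ gives $K_p^{\ell}(x,x')=F^{\ell}(x)\overline{F}^{\ell}(x')\le F^{\ell}(x)\overline{F}^{\ell}(x)=K_p^{\ell}(x,x)=K_p^{\ell}(\min(x,x'),\min(x,x'))$.

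For parts (ii) and (iii), fix $x'$ and divide the displayed identity by $p(x)$. When $x<x'$ one gets $K_p^{\ell}(x,x')/p(x)=\overline{F}^{\ell}(x')\cdot\big(F^{\ell}(x)/p(x)\big)$, where the first factor is a non-negative constant in $x$; hence if $F^{\ell}(x)/p(x)$ is non-decreasing then so is $x\mapsto K_p^{\ell}(x,x')/p(x)$ on $\{x<x'\}$, which is (ii). When $x>x'$ one gets $K_p^{\ell}(x,x')/p(x)=F^{\ell}(x')\cdot\big(\overline{F}^{\ell}(x)/p(x)\big)$ with $F^{\ell}(x')\ge 0$ constant in $x$; hence if $\overline{F}^{\ell}(x)/p(x)$ is non-increasing then so is $x\mapsto K_p^{\ell}(x,x')/p(x)$ on $\{x>x'\}$, which is (iii).

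There is essentially no obstacle once \eqref{eq:1} is in hand: the argument is pure bookkeeping with the signs of the factors and the direction of monotonicity. The only points requiring a little care are to pair the $\min$ with $F^{\ell}$ and the $\max$ with $\overline{F}^{\ell}$ correctly, to note that in (ii)–(iii) only one variable is being varied (which is precisely why the factorisation degenerates to a constant times a single monotone function on the relevant half-line), and to invoke the non-negativity of $F^{\ell}$ and $\overline{F}^{\ell}$ when multiplying inequalities through by these factors.
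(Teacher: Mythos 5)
Your proof is correct and follows essentially the same route as the paper: all three parts are read off from the factorisation \eqref{eq:1}, with (i) using the monotonicity of $x\mapsto\mathbb{E}[\chi^{-\ell}(x,X)]$ exactly as in the paper's argument, and (ii)--(iii) obtained by observing that for fixed $x'$ one factor is constant on the relevant half-line. The WLOG reduction in (i) is a slightly cleaner way of phrasing the paper's ``reverting the argument'' step, but the substance is identical.
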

  
  \begin{proof}
{To see (i), we start from \eqref{eq:1}, 
$  K_p^{\ell}(x, x') 
   {=\mathbb{E} \left[ \chi^{\ell}(X, \min(x, x'))
  \right] \mathbb{E} \left[ \chi^{-\ell}(\max(x, x'),X) \right] }  
$
and by Lemma \ref{lem:chirules}, $\chi^{-\ell}(u,x)$ is non-increasing
in $u$: $\chi^{-\ell}(\max(x, x'),X) \le \chi^{-\ell}(\min(x,
x'),X)$. We deduce that
\begin{equation*}
  K_p^{\ell}(x, x')   \le
    {\mathbb{E} \left[ \chi^{\ell}(X, \min(x, x'))
  \right] \mathbb{E} \left[ \chi^{-\ell}(\min(x, x'),X) \right] }
\end{equation*}
Assertion (i) follows by reverting the argument, because
$ (\chi^{\ell}(X, \min(x, x'))^2 = \chi^{\ell}(X, \min(x, x')) $.}
To see (ii), assume that  $\mathbb{E}[\chi^{\ell}(X, x)]/p(x)$ is non-decreasing. Then with \eqref{eq:1}, for $x < x'$, 
 \begin{align*}
 \frac{1}{p(x)}  K_p^{\ell}(x, x') 
   &= \frac{1}{p(x)} \mathbb{E} \left[ \chi^{\ell}(X, \min(x, x'))
  \right] \mathbb{E} \left[ \chi^{-\ell}(\max(x, x'),X) \right] \\
  &= \left( \frac{1}{p(x)} \mathbb{E} \left[ \chi^{\ell}(X, x)
  \right] \right) \mathbb{E} \left[ \chi^{-\ell}(x',X) \right];
\end{align*}
the second factor is a constant, and the first factor is assumed to be
non-decreasing. Hence the assertion follows. For (iii), assume that
$\mathbb{E}[\chi^{-\ell}(x,X)]/p(x)$ is non-increasing; then similarly
as above, for $x > x'$,
 \begin{align*}
 \frac{1}{p(x)}  K_p^{\ell}(x, x') 
   &= \frac{1}{p(x)} \mathbb{E} \left[ \chi^{\ell}(X, \min(x, x'))
  \right] \mathbb{E} \left[ \chi^{-\ell}(\max(x, x'),X) \right] \\
  &= \left( \mathbb{E} \left[ \chi^{\ell}(X, x')
  \right] \right) \left(  \frac{1}{p(x)}\mathbb{E} \left[
    \chi^{-\ell}(x,X) \right] \right); 
\end{align*}
the first factor is constant, and the second factor is non
increasing. Hence the assertion follows.
    \end{proof}
    Figures \ref{fig:figKPP} and \ref{fig:figKPP2} {display the
      functions {$x \mapsto {K_p^{\ell}(x, x')}/{p(x)}$ (for various
        values of $x'$) and} $x \mapsto {K_p^{\ell}(x, x)}/{p(x)}$}
    for the standard normal and several choices of the parameters in
    beta, gamma, binomial, Poisson and hypergeometric distributions.
    \begin{exm}  The following facts are easy to prove:
  \begin{enumerate}
  \item If $p(x)$ is the standard normal distribution then
    ${K_p^{0}(x, x)}/{p(x)}$ behaves as $1/|x|$ for large $|x|$, see
    Figure \ref{fig:sfig11b}.
  \item If $p(x)$ is gamma then ${K_p^{0}(x, x)}/{p(x)}$ behaves as a
    constant for large $x$, see Figure \ref{fig:sfig31b}.
 
  \item The function $x \mapsto {K_p^{0}(x, x)}/{p(x)}$ is not in
    $L^1(p)$ for $p$ a Cauchy distribution.
    \item If $p$ is strictly-log concave then ${K_p^{\ell}(x, x)}/{p(x)}$ is bounded.
    \end{enumerate}
    
\end{exm}

 \begin{figure}
\begin{subfigure}{.49\textwidth}
  \centering
  \includegraphics[width=1\linewidth]{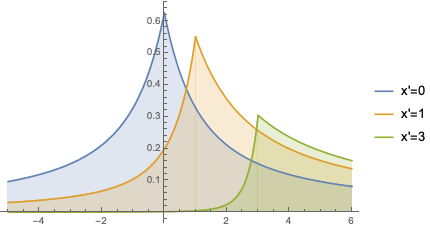}
  \caption{}
  \label{fig:sfig11}
\end{subfigure}%
\begin{subfigure}{.49\textwidth}
  \centering
  \includegraphics[width=1\linewidth]{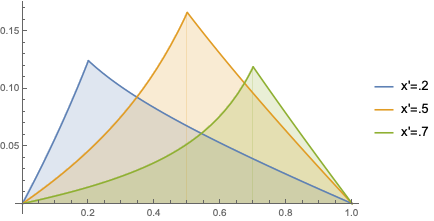}
  \caption{}
  \label{fig:sfig21}
\end{subfigure}
\begin{subfigure}{.49\textwidth}
  \centering
  \includegraphics[width=1\linewidth]{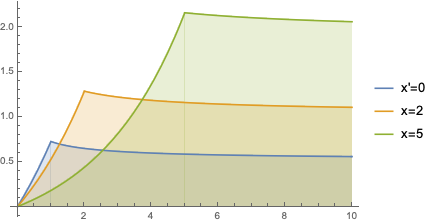}
  \caption{}
  \label{fig:sfig31}
\end{subfigure}
\begin{subfigure}{.49\textwidth}
  \centering
  \includegraphics[width=1\linewidth]{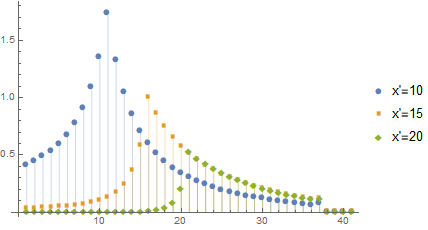}
  \caption{}
  \label{fig:sfigbin1}
\end{subfigure}%

\begin{subfigure}{.49\textwidth}
  \centering
  \includegraphics[width=1\linewidth]{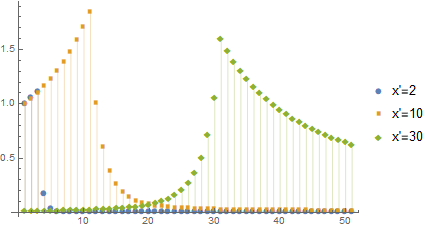}
  \caption{}
  \label{fig:sfigpoi1}
\end{subfigure}
\begin{subfigure}{.49\textwidth}
  \centering
  \includegraphics[width=1\linewidth]{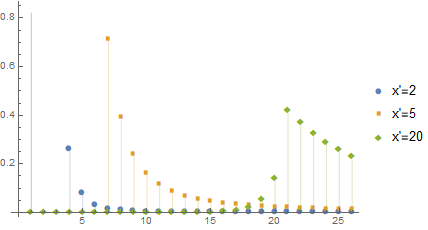}
  \caption{}
  \label{fig:sfighyp1}
\end{subfigure}
\caption{{\it The functions
    $x \mapsto {K_p^{\ell}(x, x')}/{p(x)}$ for different (fixed) values of $x'$ and
    $p$ the standard normal distribution (Figure~\ref{fig:sfig11});
    beta distribution with parameters 1.3 and 2.4
    (Figure~\ref{fig:sfig21}); gamma distribution with parameters 1.3
    and 2.4 (Figure~\ref{fig:sfig31}); binomial distribution with
    parameters $(50, 0.2)$ (Figure~\ref{fig:sfigbin1}); Poisson
    distribution with parameter 20 (Figure~\ref{fig:sfigpoi1});
    hypergeometric distribution with parameters 100, 50 and 500
    (Figure~\ref{fig:sfighyp1})}.
\label{fig:figKPP}}
\end{figure}

 \begin{figure}
\begin{subfigure}{.49\textwidth}
  \centering
  \includegraphics[width=1\linewidth]{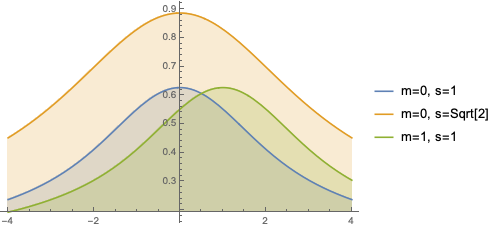}
  \caption{}
  \label{fig:sfig11b}
\end{subfigure}%
\begin{subfigure}{.49\textwidth}
  \centering
  \includegraphics[width=1\linewidth]{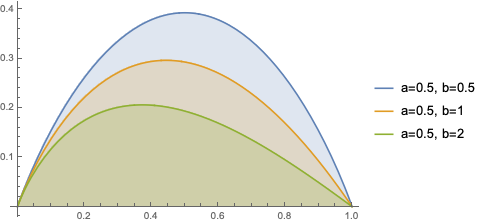}
  \caption{}
  \label{fig:sfig21b}
\end{subfigure}

\begin{subfigure}{.49\textwidth}
  \centering
  \includegraphics[width=1\linewidth]{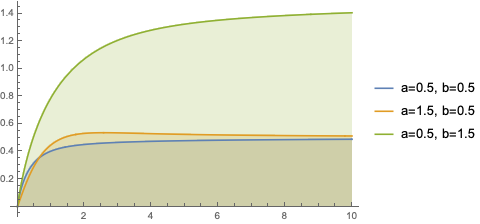}
  \caption{}
  \label{fig:sfig31b}
\end{subfigure}
\begin{subfigure}{.49\textwidth}
  \centering
  \includegraphics[width=1\linewidth]{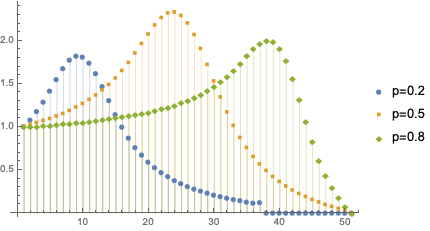}
  \caption{}
  \label{fig:sfigbin1b}
\end{subfigure}%

\begin{subfigure}{.49\textwidth}
  \centering
  \includegraphics[width=1\linewidth]{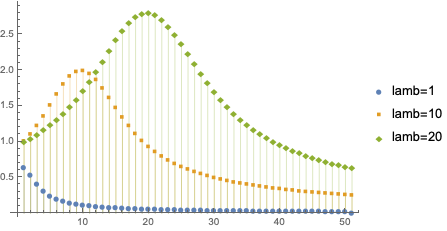}
  \caption{}
  \label{fig:sfigpoi1b}
\end{subfigure}
\begin{subfigure}{.49\textwidth}
  \centering
  \includegraphics[width=1\linewidth]{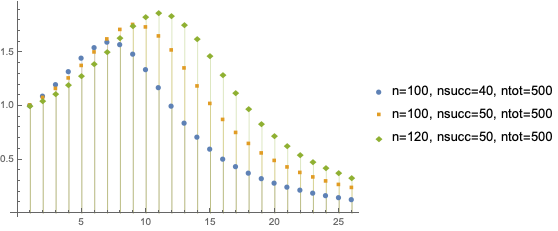}
  \caption{}
  \label{fig:sfighyp1b}
\end{subfigure}
\caption{{\it The functions $x \mapsto {K_p^{\ell}(x, x)}/{p(x)} $ for
  different parameter values $p$ the standard normal distribution
  (Figure~\ref{fig:sfig11b}); beta distribution
  (Figure~\ref{fig:sfig21b}); gamma distribution
  (Figure~\ref{fig:sfig31b}); binomial distribution with parameters
  (Figure~\ref{fig:sfigbin1b}); Poisson distribution
  (Figure~\ref{fig:sfigpoi1b}); hypergeometric distribution
  (Figure~\ref{fig:sfighyp1b})}.
\label{fig:figKPP2}}
\end{figure}

\subsection{Sufficient conditions and integrability}
\label{sec:suff-cond}

As anticipated, we now study the conditions under which the IBP Lemmas
\ref{lma:skadj} and \ref{lem:stIBP1} hold. {All proofs are technical
manipulations of basic calculus and relegated to the Appendix \ref{sec:some-non-essential}.}

We
start by the decryption of the conditions for Lemma
\ref{lma:skadj}. Recall the notations $a_{\ell}$ and $b_{\ell}$ from
\eqref{eq:31}. {Furthermore if $\ell =0$ we write
  $f(a^+) = \lim_{x \to a, x>a}f(x)$ and
  $f(b^-) = \lim_{x \to b, x<b}f(x)$.  In the case that $a = -\infty$
  or $b = \infty$, for $\ell \in \{-1, 0, 1\}$, we write
  $f(-\infty^+) = \lim_{x \to - \infty}f(x)$ and
  $f(\infty^-) = \lim_{x \to \infty}f(x)$. To simplify notation, if
  $\ell \in \{-1, 1\}$ and $a \ne -\infty$, we write $f(a^+) =f(a)$,
  and similarly, if $b \ne \infty, f(b^-) = f(b)$.  }

\begin{prop}[Sufficient conditions for IBP -- version
  1] \label{prop:suff1}Let $ f\in \mathrm{dom}(p, \Delta^{\ell})$ and
  $g \in \mathrm{dom}(\Delta^{-\ell})$.  In order for \eqref{eq:3} to
  hold it suffices that they jointly satisfy the following conditions
  \begin{align}
    \label{eq:11}
  &   (\mathcal{T}_p^{\ell} f) g  \mbox{ and } f (\Delta^{-\ell}g)  \in L^1(p)\\
   &  \label{eq:9}
{
    f(b^-+a_{\ell}) g(b^-+a_{\ell}-\ell) p(b^-+a_{\ell}) =  f(a^+-b_{\ell})g(a^+-b_{\ell}-\ell) p(a^+-b_{\ell}).
     }
  \end{align}

\end{prop}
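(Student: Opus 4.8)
The plan is to show that conditions \eqref{eq:11} and \eqref{eq:9} together imply conditions (i) and (ii) of Lemma \ref{lma:skadj}, after which the conclusion \eqref{eq:3} follows immediately from that lemma. Condition (ii) of Lemma \ref{lma:skadj}, namely $f(\cdot)\Delta^{-\ell}g(\cdot)\in L^1(p)$, is literally the second half of \eqref{eq:11}, so nothing is needed there. The work is entirely in verifying condition (i): that $f(\cdot)g(\cdot-\ell)\in\mathcal{F}^{(1)}_\ell(p)$, i.e.\ that $F := f\cdot g(\cdot-\ell)$ lies in $\mathrm{dom}(p,\Delta^\ell)$, that $\Delta^\ell(Fp)\,\mathbb{I}[\mathcal{S}(p)]\in L^1(\mu)$, and that $\int_{\mathcal{S}(p)}\Delta^\ell(Fp)\,\mu(\mathrm{d}x)=0$ — equivalently (using the reformulation noted after the definition of $\mathcal{F}^{(1)}_\ell$) that $F(b+a_\ell)p(b+a_\ell)=F(a-b_\ell)p(a-b_\ell)$, with values at infinite endpoints read as limits.

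First I would apply the product rule \eqref{eq:productrule} to $F p = f(\cdot)p(\cdot)\cdot g(\cdot-\ell)$ — more precisely, writing $\mathcal{T}_p^\ell$-style, $\Delta^\ell(f(x)p(x)g(x-\ell)) = (\Delta^\ell(f(x)p(x)))g(x) + f(x)p(x)\Delta^{-\ell}g(x)$, which is exactly \eqref{eq:90} multiplied through by $p$. Thus $\Delta^\ell(Fp)(x) = p(x)\big((\mathcal{T}_p^\ell f(x))g(x) + f(x)\Delta^{-\ell}g(x)\big)$ on $\mathcal{S}(p)$. The integrability $\Delta^\ell(Fp)\mathbb{I}[\mathcal{S}(p)]\in L^1(\mu)$ is then precisely the statement that $(\mathcal{T}_p^\ell f)g + f\Delta^{-\ell}g\in L^1(p)$, which follows from \eqref{eq:11}; this step simultaneously justifies applying \eqref{eq:productrule} in the first place and shows $Fp\in\mathrm{dom}(\Delta^\ell)$, i.e.\ $F\in\mathrm{dom}(p,\Delta^\ell)$. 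Next, integrating this expression over $\mathcal{S}(p)$ and invoking the telescoping/fundamental-theorem identity \eqref{eq:30}--\eqref{eq:31}, I get $\int_{\mathcal{S}(p)}\Delta^\ell(Fp)\,\mu(\mathrm{d}x) = (Fp)(b+a_\ell) - (Fp)(a-b_\ell)$, with the caveat that one must know these boundary values exist (as limits, when $a$ or $b$ is infinite); the existence of the limits is guaranteed because the integral of $\Delta^\ell(Fp)$ is absolutely convergent, so $(Fp)(x) = (Fp)(c) + \int_c^x \Delta^\ell(Fp)$ has a finite limit at each endpoint. Writing $(Fp)(b+a_\ell) = f(b+a_\ell)g(b+a_\ell-\ell)p(b+a_\ell)$ and similarly at $a-b_\ell$, and using the convention that $b^-, a^+$ replace $b, a$ in the continuous case while $b^-=b, a^+=a$ in the discrete case, this boundary difference is exactly the left minus right side of \eqref{eq:9}, hence zero. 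Therefore $F\in\mathcal{F}^{(1)}_\ell(p)$, condition (i) holds, and Lemma \ref{lma:skadj} delivers \eqref{eq:3}.

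The main obstacle is the careful handling of the boundary terms and the interchange of limits: one must argue that $(fgp)(x)$ — not merely $(fpg(\cdot-\ell))$ evaluated formally — has well-defined limits at $a$ and $b$, and that these coincide with what \eqref{eq:9} asserts, across the three cases $\ell\in\{-1,0,1\}$ where the shifts $a_\ell, b_\ell$ and the meaning of the endpoint evaluations differ. The clean way around this is the observation just made: absolute integrability of $\Delta^\ell(Fp)$ over $\mathcal{S}(p)$ (from \eqref{eq:11}) forces $Fp$ to be of bounded variation near each endpoint and hence to have finite one-sided limits there, so the boundary values in \eqref{eq:9} are automatically meaningful and \eqref{eq:30} applies verbatim. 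The remaining bookkeeping — matching $b+a_\ell$ versus $b^-+a_\ell$ in the continuous versus discrete regimes, and checking that the $\mathbb{I}[\ell=0]\mathbb{I}[x=y]$-type diagonal subtleties from Lemma \ref{lem:chirules} play no role here — is routine, and I would relegate it (as the paper does) to the appendix.
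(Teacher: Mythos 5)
Your proposal is correct and follows essentially the same route as the paper's proof: both reduce to verifying conditions (i) and (ii) of Lemma \ref{lma:skadj}, use the product rule \eqref{eq:productrule} to express $\Delta^{\ell}(fpg(\cdot-\ell))$ as $p\cdot\big((\mathcal{T}_p^{\ell}f)g + f\Delta^{-\ell}g\big)$ so that \eqref{eq:11} gives the required integrability, and then invoke \eqref{eq:30} so that \eqref{eq:9} kills the boundary terms. Your additional observation that absolute integrability of $\Delta^{\ell}(Fp)$ forces the one-sided limits at the endpoints to exist is a welcome extra precision that the paper leaves implicit.
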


For ease of future reference, we spell out \eqref{eq:9} in the three
cases that interest us:
\begin{align*}
  \begin{cases}
    f(b^-) g(b^-)p(b^-) =    f(a^+) g(a^+)p(a^+)  & \ell = 0\\
  f(b^-) g(b^-+1) p(b^-) = 0 & \ell = -1\\
f(a^+)g(a^+-1) p(a^+) =0 & \ell = 1.
  \end{cases}                                                         
\end{align*}

{ We now derive a set of (almost) necessary and sufficient conditions
  under which \eqref{eq:21} holds. }
 
\begin{prop}[{Sufficient} conditions for IBP -- version 2]\label{prop:ibp2}
  Let $g \in \mathrm{dom}(\Delta^{-\ell})$. In order for \eqref{eq:21}
  to hold, it is necessary and sufficient that they jointly satisfy
  the three following conditions:
     \begin{align}
       \label{eq:91}
& {f},  g  \mbox{ and } f g \in L^1(p),  \\
& \label{eq:18}\mathcal{L}_p^{\ell} f ( \Delta^{-\ell}g)  \in L^1(p)\\
&
     \big(\mathcal{L}_p^{\ell}f(b^-+ a_{\ell})\big) g(b^-+ a_{\ell}-\ell)p(b^-+ a_{\ell}) =
\big( \mathcal{L}_p^{\ell}f(a^+- b_{\ell})\big) g(a^+- b_{\ell}-\ell)p(a^+- b_{\ell})   . \label{eq:92} 
     \end{align}
\end{prop}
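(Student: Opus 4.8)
The plan is to prove the equivalence by explicitly unfolding \eqref{eq:21} using the product rule \eqref{eq:productrule} and Lemma~\ref{lem:inverse}, then matching the surviving boundary term with \eqref{eq:92}. First I would set $F = \mathcal{L}_p^{\ell} f$; by Lemma~\ref{lem:inverse} we have $F \in \mathcal{F}_\ell^{(1)}(p)$ and $\mathcal{T}_p^\ell F(x) = f(x) - \mathbb{E}[f(X)]$ on $\mathcal{S}(p)$. Apply the product rule \eqref{eq:productrule} to the pair $F(\cdot)$ and $g(\cdot - \ell)$ to obtain the pointwise identity
\begin{equation*}
  \mathcal{T}_p^\ell\bigl(F(x) g(x-\ell)\bigr) = \bigl(f(x) - \mathbb{E}[f(X)]\bigr) g(x) + F(x)\,\Delta^{-\ell} g(x), \qquad x \in \mathcal{S}(p),
\end{equation*}
which is just \eqref{eq:90} rewritten. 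Integrating both sides against $p\,\mu$ over $\mathcal{S}(p)$, the left-hand side equals $\int_{\mathcal{S}(p)} \Delta^\ell\bigl(F(x) g(x-\ell) p(x)\bigr)\,\mu(\mathrm{d}x)$, which by \eqref{eq:30} telescopes to the boundary difference $F(b^-+a_\ell)g(b^-+a_\ell-\ell)p(b^-+a_\ell) - F(a^+-b_\ell)g(a^+-b_\ell-\ell)p(a^+-b_\ell)$. So \eqref{eq:21} holds exactly when this boundary difference vanishes \emph{and} the right-hand side can legitimately be split into $\mathrm{Cov}$-type and $L^1$ pieces, i.e.\ when each summand is separately integrable.

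Next I would identify precisely which integrability hypotheses make every manipulation above valid and reversible, so as to get the ``necessary and sufficient'' claim rather than just sufficiency. The condition \eqref{eq:91} (that $f, g, fg \in L^1(p)$) is what guarantees that $\mathbb{E}[f(X)]$ exists, that the covariance $\mathbb{E}[(f(X)-\mathbb{E}[f(X)])g(X)]$ on the right of \eqref{eq:21} is well-defined and finite, and — combined with $F = \mathcal{L}_p^\ell f$ being bounded-growth controlled — that $F(\cdot)g(\cdot-\ell)p(\cdot)$ is $\mu$-integrable on $\mathcal{S}(p)$ so that \eqref{eq:30} genuinely applies. Condition \eqref{eq:18} is exactly the requirement that $F\,\Delta^{-\ell}g \in L^1(p)$, i.e.\ that the second summand on the right of \eqref{eq:90} is integrable; together with \eqref{eq:91} this lets us separate the integral of the sum into the sum of integrals. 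Then \eqref{eq:92} is literally the statement that the telescoped boundary term is zero. Conversely, if \eqref{eq:21} holds one can run the argument backwards: the right side being finite forces the covariance term finite, the difference of the two summands is the exact value $\int \mathcal{T}_p^\ell(Fg(\cdot-\ell))$, and finiteness of all pieces forces \eqref{eq:91}, \eqref{eq:18}, and then \eqref{eq:92} by subtraction.

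The technical heart of the argument — and the place I expect the main obstacle — is the claim that under \eqref{eq:91} the function $F(\cdot)g(\cdot-\ell)p(\cdot) = \mathcal{L}_p^\ell f(\cdot)g(\cdot-\ell)p(\cdot)$ is $\mu$-integrable on $\mathcal{S}(p)$, so that the telescoping formula \eqref{eq:30} is applicable and the boundary limits in \eqref{eq:92} actually exist. This requires controlling $\mathcal{L}_p^\ell f(x) p(x) = \int_a^{x-a_\ell}(f(u)-\mathbb{E}[f(X)])p(u)\,\mu(\mathrm{d}u)$, which is a bounded-variation-type primitive of an $L^1(\mu)$-looking quantity; one has to be careful near the endpoints $a$ and $b$, especially when they are infinite, and to verify that the relevant one-sided limits $\mathcal{L}_p^\ell f(b^-+a_\ell)g(b^-+a_\ell-\ell)p(b^-+a_\ell)$ and its counterpart at $a$ exist as (possibly zero) limits. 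In the discrete case ($\ell = \pm 1$) this is routine summation-by-parts bookkeeping; in the continuous case ($\ell = 0$) one invokes absolute continuity of $F p$ and dominated convergence, using \eqref{eq:91} to dominate. I would handle the three cases $\ell \in \{-1,0,1\}$ in parallel via the unified notation $a_\ell, b_\ell$ and \eqref{eq:30}, spelling out the continuous case in full and indicating that the discrete cases are analogous (or, as the excerpt suggests, deferring the full technical verification to Appendix~\ref{sec:some-non-essential}).
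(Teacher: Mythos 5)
Your proposal is correct and follows essentially the same route as the paper: apply the product rule \eqref{eq:productrule} to the pair $\mathcal{L}_p^{\ell} f(\cdot)$, $g(\cdot-\ell)$, use Lemma \ref{lem:inverse} to replace $\mathcal{T}_p^{\ell}\mathcal{L}_p^{\ell}f$ by $f-\mathbb{E}[f(X)]$, identify \eqref{eq:91} and \eqref{eq:18} as the separate integrability of the two summands, and read \eqref{eq:92} as the vanishing of the telescoped boundary term, i.e.\ the zero-mean condition in the definition of $\mathcal{F}^{(1)}_{\ell}(p)$. One small precision: what \eqref{eq:30} requires is $\mu$-integrability of $\Delta^{\ell}\bigl(p\,\mathcal{L}_p^{\ell}f\,g(\cdot-\ell)\bigr)$ rather than of the product itself, but this is exactly what \eqref{eq:91} and \eqref{eq:18} deliver through the displayed product-rule identity, so your argument closes as intended.
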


Requirement \eqref{eq:91} is natural and condition \eqref{eq:92} is
mild as it is satisfied as soon as $g$ and/or $f$ are well behaved at
the edges of the support. {Condition \eqref{eq:18} (which is already
stated in the original statement of Lemma \ref{lem:stIBP1}) is harder
to fathom.  In order to make it even more readable, and facilitate the
connexion with the literature, we {specialise} the conditions further
in our next result.}

  \begin{prop}\label{lma:suffcond1} 
    Let $f, g$ and $fg \in L^1(p)$. If
    $g \in \mathrm{dom}(\Delta^{-\ell})$ is of bounded variation and
    satisfies the following two conditions: 
    \begin{enumerate}
    \item \label{item:cond1g} {$g(a^+ -b_\ell -\ell) \mathbb{P}( X
        \le a^+ - a_\ell- b_{\ell}) = 0$ and $g(b^- +
      a_\ell-\ell) \mathbb{P}(X \ge b^- +a_\ell + b_\ell) = 0$} 
\item \label{item:cond2g} {$g(a^+ -b_\ell-\ell ) \mathbb{E}[|f(X)|
    \chi^{\ell}(X \le a^+ - b_\ell)] = 0$ and $g(b^- +
      a_\ell - \ell)\mathbb{E}[|f(X)|
    \chi^{-\ell}(b^- + a_\ell, X)]  = 0$},
    \end{enumerate}
    then \eqref{eq:92} holds. In particular if $f$ is bounded or in
    $L^2(p)$, then the condition \ref{item:cond2g} above is implied by
    condition \ref{item:cond1g}.

\end{prop}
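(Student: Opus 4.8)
The goal is to show that condition \eqref{eq:92} from Proposition \ref{prop:ibp2} — the boundary term vanishing-difference condition for the IBP — follows from the boundedness of variation of $g$ together with the two stated conditions \ref{item:cond1g} and \ref{item:cond2g}. The natural starting point is the representation \eqref{eq:32} of $\mathcal{L}_p^{\ell}f$, namely $\mathcal{L}_p^{\ell}f(x) = \frac{1}{p(x)}\mathbb{E}[\chi^{\ell}(X,x)(f(X)-\mathbb{E}[f(X)])]$, which shows that $\mathcal{L}_p^{\ell}f(x)p(x) = \mathbb{E}[\chi^{\ell}(X,x)(f(X)-\mathbb{E}[f(X)])]$. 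So \eqref{eq:92} is exactly the statement that $g(\cdot - \ell)\,\mathbb{E}[\chi^{\ell}(X,\cdot)(f(X)-\mathbb{E}[f(X)])]$ takes the same limiting value at $b^-+a_\ell$ as at $a^+-b_\ell$. The plan is therefore to evaluate each of these two limits separately and show that each one is zero.

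First I would handle the endpoint $a^+ - b_\ell$. Using the second equality in \eqref{eq:53} (or equivalently rewriting $\chi^{\ell}(X,x)$ via \eqref{chirule1}), near the left edge $\mathcal{L}_p^{\ell}f(x)p(x) = \mathbb{E}[\chi^{\ell}(X,x)(f(X)-\mathbb{E}[f(X)])]$, and the key observation is that $\chi^{\ell}(X, a^+-b_\ell)$ forces $X$ into the leftmost atom/neighbourhood of the support: more precisely $\chi^{\ell}(X, a^+-b_\ell) = \mathbb{I}[X \le a^+ - b_\ell - a_\ell]$. Bounding $|\mathbb{E}[\chi^{\ell}(X, a^+-b_\ell)(f(X)-\mathbb{E}[f(X)])]|$ by $\mathbb{E}[|f(X)|\,\chi^{\ell}(X, a^+-b_\ell)] + |\mathbb{E}[f(X)]|\,\mathbb{P}(X \le a^+-b_\ell-a_\ell)$, we see that after multiplying by $g(a^+ - b_\ell - \ell)$ the first term vanishes by condition \ref{item:cond2g} and the second by condition \ref{item:cond1g}. (In the continuous case $\ell=0$, $a_\ell = b_\ell = 0$ and this is just the statement that $\lim_{x\downarrow a} g(x)\int_a^x(f-\mathbb{E}[f(X)])p\,\mathrm{d}\mu = 0$.) The right endpoint $b^- + a_\ell$ is handled symmetrically, using instead the third expression in \eqref{eq:53}, so that $\mathcal{L}_p^{\ell}f(x)p(x) = \mathbb{E}[\chi^{-\ell}(x,X)(\mathbb{E}[f(X)] - f(X))]$, and near the right edge $\chi^{-\ell}(b^-+a_\ell, X)$ localises $X$ to the rightmost part of the support; the same two-term bound, combined with the right-hand halves of conditions \ref{item:cond1g} and \ref{item:cond2g}, gives zero.

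For the final sentence of the proposition — that if $f$ is bounded or $f \in L^2(p)$ then \ref{item:cond2g} is implied by \ref{item:cond1g} — I would argue as follows. If $g(a^+-b_\ell-\ell) = 0$ there is nothing to prove, so assume $g(a^+-b_\ell-\ell) \neq 0$; then \ref{item:cond1g} forces $\mathbb{P}(X \le a^+ - a_\ell - b_\ell) = 0$, i.e.\ the event $\{\chi^{\ell}(X, a^+-b_\ell) = 1\}$ is $\mathbb{P}$-null. In the bounded case $\mathbb{E}[|f(X)|\chi^{\ell}(X, a^+-b_\ell)] \le \|f\|_\infty \mathbb{P}(X \le a^+-a_\ell-b_\ell) = 0$ directly; in the $L^2$ case the Cauchy–Schwarz inequality gives $\mathbb{E}[|f(X)|\chi^{\ell}(X,a^+-b_\ell)] \le \|f\|_{L^2(p)}\,\mathbb{P}(X \le a^+-a_\ell-b_\ell)^{1/2} = 0$. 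The right edge is identical. The main obstacle — really the only subtlety — is bookkeeping: correctly translating the index shifts $a_\ell, b_\ell, \ell$ between the continuous case and the two discrete cases so that the "localising" indicator $\chi^{\ell}(X, a^+-b_\ell)$ (resp.\ $\chi^{-\ell}(b^-+a_\ell, X)$) really does pick out exactly the event appearing in conditions \ref{item:cond1g} and \ref{item:cond2g}; once the indices are matched the estimates are one-line triangle-inequality bounds. The bounded-variation hypothesis on $g$ is used only implicitly, to guarantee that the one-sided limits $g(a^+-b_\ell-\ell)$ and $g(b^-+a_\ell-\ell)$ exist so that the boundary expressions in \eqref{eq:92} are well defined.
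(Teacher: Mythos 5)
Your proof of \eqref{eq:92} is correct and follows essentially the paper's own argument: write $p(x)\mathcal{L}_p^{\ell}f(x)$ via the $\chi^{\ell}$-representation, split by the triangle inequality at each edge into a term killed by condition \ref{item:cond2g} and a term killed by condition \ref{item:cond1g}, and dispose of the final sentence by boundedness or Cauchy--Schwarz (the paper uses the equivalent four-term decomposition $-p(x)\mathcal{L}_p^{\ell}f(x)=\mathbb{E}[f(X)\chi^{-\ell}(x,X)]\mathbb{E}[\chi^{\ell}(X,x)]-\mathbb{E}[f(X)\chi^{\ell}(X,x)]\mathbb{E}[\chi^{-\ell}(x,X)]$, but the resulting estimates are the same). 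One caveat: the paper's proof additionally verifies \eqref{eq:18}, i.e.\ $\mathcal{L}_p^{\ell}f\,(\Delta^{-\ell}g)\in L^1(p)$, and that is where the bounded-variation hypothesis genuinely enters (decompose $g$ into monotone pieces and combine \eqref{eq:reprsntform} with $f,g,fg\in L^1(p)$); so your remark that bounded variation is used ``only implicitly'' is accurate for \eqref{eq:92} alone, but not for the full set of conditions of Proposition \ref{prop:ibp2} that this proposition is meant to secure.
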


\begin{rmk}

  
  This assumption is closer to what is to be found in the literature,
  see e.g.\ \cite{saumard2018weighted} in the case $\ell=0$. The main
  difference between the classical assumptions and ours is that we
  only impose conditions on one of the functions. We stress that there
  is a certain degree of redundancy in the items \ref{item:cond1g} and
  \ref{item:cond2g} together with the assumption that $g \in L^1(p)$
  and is of bounded variation; the statement could be shortened at the
  loss of readability.

\end{rmk}

In the sequel, to preserve as much generality as possible and not
overburden the statements, we will simply require that ``the
assumptions of Lemma \ref{lem:stIBP1} are
satisfied.''

 \subsection{The inverse Stein operator} \label{subsec:inverseOp}

 We conclude this section by exploring easy consequences of the
 representations from Section \ref{sec:three-repr-inverse}. These
 results are also of independent interest to practitioners of Stein's
 method.
 
{
 \begin{lma}
If   $f, \mathcal{L}_p^{\ell}f(X) \in L^1(p)$ then
\begin{equation}\label{eq:4}
    \mathbb{E} \left[ -\mathcal{L}_p^{\ell}f(X) \right] = \mathbb{E}
    \left[ (X_2-X_1)^+(f(X_2)-f(X_1)) \right]                                                              =      \frac{1}{2}        \mathbb{E}
    \left[ (X_2-X_1)(f(X_2)-f(X_1)) \right]                           
\end{equation}
where $(\cdot)^+$ denotes the positive part of $(\cdot)$.  In
particular, if the conditions of Lemma \ref{lem:stIBP1} are satisfied
with $f(x) = g(x) = \mathrm{Id}$, then
$ \mathbb{E}[\tau_p^{\ell}(X)] = \mathrm{Var}(X).$
\end{lma}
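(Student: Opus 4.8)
The plan is to read everything off Representation Formula~I (Lemma~\ref{lem:represnt}) by integrating it against one further independent copy of $X$ and then using the exchangeability of $X_1,X_2$.

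First I would reduce to the case $\mathbb{E}[f(X)]=0$: replacing $f$ by $f-\mathbb{E}[f(X)]$ changes neither $\mathcal{L}_p^{\ell}f$ (by \eqref{eq:53}) nor the three quantities in \eqref{eq:4}, and preserves $f\in L^1(p)$. Next, by Lemma~\ref{lem:represnt}, for every $x\in\mathcal{S}(p)$ we have $-\mathcal{L}_p^{\ell}f(x)=\mathbb{E}\big[(f(X_2)-f(X_1))\,\Phi^{\ell}_p(X_1,x,X_2)\big]$. Taking the expectation over a further independent copy $X\sim p$ and interchanging the $X$-expectation with the $(X_1,X_2)$-expectation, I would use that, for fixed $X_1,X_2\in\mathcal{S}(p)$,
\[
\mathbb{E}_X\big[\Phi^{\ell}_p(X_1,X,X_2)\big]=\int_{\mathcal{S}(p)}\frac{\chi^{\ell}(X_1,x)\,\chi^{-\ell}(x,X_2)}{p(x)}\,p(x)\,\mu(\mathrm{d}x)=\mu\big(\{x\in\mathcal{S}(p):X_1+a_{\ell}\le x\le X_2-b_{\ell}\}\big)=(X_2-X_1)^+,
\]
where the middle equality unfolds $\chi^{\ell}(u,y)=\mathbb{I}[u\le y-a_{\ell}]$ (recall $a_{-\ell}=b_{\ell}$), and the last one is a direct length/count computation in the Lebesgue, resp.\ counting, case (the set is an interval, and its measure is $X_2-X_1$ precisely when $X_1<X_2$, resp.\ $X_1+1\le X_2$, and $0$ otherwise — in all three cases this is $(X_2-X_1)^+$). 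This yields the first identity $\mathbb{E}\big[-\mathcal{L}_p^{\ell}f(X)\big]=\mathbb{E}\big[(f(X_2)-f(X_1))(X_2-X_1)^+\big]$.

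For the second identity I would exploit that $X_1,X_2$ are i.i.d.: exchanging their names gives $\mathbb{E}[(f(X_2)-f(X_1))(X_2-X_1)^+]=\mathbb{E}[(f(X_1)-f(X_2))(X_1-X_2)^+]=-\mathbb{E}[(f(X_2)-f(X_1))(X_2-X_1)^-]$, since $(X_1-X_2)^+=(X_2-X_1)^-$. Writing $t=t^+-t^-$ and adding the two expressions for this expectation gives $2\,\mathbb{E}[(f(X_2)-f(X_1))(X_2-X_1)^+]=\mathbb{E}[(f(X_2)-f(X_1))(X_2-X_1)]$, which is \eqref{eq:4}. Finally, for the statement on the Stein kernel, $\tau_p^{\ell}=-\mathcal{L}_p^{\ell}(\mathrm{Id})$, and the hypotheses of this lemma hold for $f=\mathrm{Id}$ whenever the conditions of Lemma~\ref{lem:stIBP1} hold for $f=g=\mathrm{Id}$ (those conditions require $\mathrm{Id}\in L^1(p)$ and, since $\Delta^{-\ell}\mathrm{Id}\equiv 1$, force $\mathcal{L}_p^{\ell}(\mathrm{Id})=-\tau_p^{\ell}\in L^1(p)$); applying \eqref{eq:4} with $f=\mathrm{Id}$ gives $\mathbb{E}[\tau_p^{\ell}(X)]=\tfrac12\,\mathbb{E}[(X_2-X_1)^2]=\mathrm{Var}(X)$. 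Equivalently, this is exactly Lemma~\ref{lem:stIBP1} with $f=g=\mathrm{Id}$, since then $-\mathcal{L}_p^{\ell}(\mathrm{Id})\,\Delta^{-\ell}\mathrm{Id}=\tau_p^{\ell}$ and $\mathrm{Cov}(X,X)=\mathrm{Var}(X)$.

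The one delicate point is the interchange of expectations in the second paragraph. Since $\Phi^{\ell}_p\ge 0$, Tonelli's theorem reduces its validity to the absolute convergence $\mathbb{E}\big[|f(X_2)-f(X_1)|(X_2-X_1)^+\big]<\infty$, and establishing this under the bare hypothesis $\mathcal{L}_p^{\ell}f(X)\in L^1(p)$ is, I expect, the main obstacle. I would dispatch it by truncating the support: for bounded $\mathcal{S}_n\uparrow\mathcal{S}(p)$, Fubini is immediate on $\mathcal{S}_n$ (the $x$-integral of $\Phi^{\ell}_p\,p$ over $\mathcal{S}_n$ is at most $\mu(\mathcal{S}_n)$, and $\mathbb{E}|f(X_2)-f(X_1)|\le 2\,\mathbb{E}|f(X)|<\infty$), which gives $\int_{\mathcal{S}_n}(-\mathcal{L}_p^{\ell}f)\,p\,\mathrm{d}\mu=\mathbb{E}\big[(f(X_2)-f(X_1))\,\mu(\mathcal{S}_n\cap[X_1+a_{\ell},X_2-b_{\ell}])\big]$; one then passes to the limit using dominated convergence on the left (dominant $|\mathcal{L}_p^{\ell}f|\,p\in L^1(\mu)$) and monotone convergence applied to the positive and negative parts of $f(X_2)-f(X_1)$ on the right.
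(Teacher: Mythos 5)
Your argument is correct and follows essentially the same route as the paper: take the $p$-mean of Representation Formula~I, use that $\mathbb{E}\big[\Phi^{\ell}_p(x_1,X,x_2)\big]=(x_2-x_1)^+$ (the paper gets this from \eqref{eq:28} with $f=\mathrm{Id}$, you compute the interval measure directly — same thing), condition on $(X_1,X_2)$, and conclude the second identity by exchangeability of $X_1,X_2$. Your closing paragraph justifying the interchange of expectations is more careful than the paper, which passes over this point in silence.
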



\begin{proof}
Representation \eqref{eq:reprsntform} gives
    $\mathbb{E}\big[ -\mathcal{L}_p^{\ell}f(X)\big]  = \mathbb{E} [
                                             ( f(X_2)-f(X_1) )
                                              \Phi_p^{\ell}(X_1, X,
                                              X_2)].$
    Using \eqref{eq:28} with $f(x) = x$, we have
    $\mathbb{E}[\Phi_p^{\ell}(x_1, X, x_2)] = (x_2 - x_1)^+$. Hence, after
    conditioning with respect to $X_1, X_2$, the first equality in \eqref{eq:4}
    follows. The second equality  follows by symmetry. The second
    claim is immediate under the stated assumptions. 
\end{proof}
\begin{rmk}
  Once again, our assumptions are minimal but not transparent. It is
  easy to spell out these conditions explicitly for any specific
  target. For instance if $X$ has bounded support or support $\R$ then
  finite variance suffices.
\end{rmk}
  }
  \begin{prop}\label{prop:applirep} {Suppose that all test functions
      satisfy the conditions in Lemma \ref{lem:stIBP1}.  Let
      $ \|f\|_{{\mathcal{S}}(p), \infty} = \sup_{x \in
        {\mathcal{S}}(p)} | f(x)|.$ }
\begin{enumerate}
\item If $f$ is monotone then $\mathcal{L}_p^{\ell}f(x)$ does not
  change sign. 
\item (Uniform bounds Stein bounds) Consider, for $h$ and $\eta$ in
  $L^1(p)$ the function
  \begin{align*}
     g_h^{p, \ell, \eta} (x) 
 = \frac{\mathcal{L}_p^{\ell} h(x+ \ell)}{\mathcal{L}_p^{\ell} 
 \eta(x+\ell)}
  \end{align*}
   defined in \eqref{eq:8} {which
    solves the $\eta$-Stein equation \eqref{eq:43} for $h$}. 
  If {$\eta$ is monotone  and }
  $|h(x)-h(y)| \le k|\eta(x)-\eta(y)|$ for all {$x, y \in \mathcal{S}(p)$, then} 
  \begin{align*}
    \|g_h^{p, \ell, \eta} \|_{{\mathcal{S}}(p), \infty} \le k. 
  \end{align*}
  In particular, if $h \in L^1(p)$ is Lipschitz continuous with
  Lipschitz constant $1$ then the above applies with $ \eta(x) = x$,
  and $\|g_h^{p, 0, \mathrm{Id}} \|_{{\mathcal{S}}(p), \infty} \le 1$.
\item (Non uniform bounds Stein bounds) 
\begin{align}
  \label{eq:47}
  \left| \mathcal{L}_p^{\ell}f(x)  \right| \le 2{ \|f\|_{{\mathcal{S}}(p), \infty} }  
  \frac{\mathbb{E}[\chi^{\ell}(X_1, x)] \mathbb{E}[\chi^{-\ell}(x,
    X_2)]}{p(x)} 
\end{align}
for all $x \in \mathcal{S}(p)$. 
\end{enumerate}

\end{prop}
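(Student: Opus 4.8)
The plan is to derive all three assertions directly from the representation formulas of Section~\ref{sec:three-repr-inverse}, using that under the stated integrability hypotheses all the expectations involved are finite.

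For (1), I would start from Representation formula~I, namely
\[
-\mathcal{L}_p^{\ell}f(x) = \mathbb{E}\left[(f(X_2)-f(X_1))\,\Phi^{\ell}_p(X_1,x,X_2)\right],
\]
where $X_1,X_2$ are independent copies of $X$. Since $\Phi^{\ell}_p(X_1,x,X_2) = \chi^{\ell}(X_1,x)\chi^{-\ell}(x,X_2)/p(x) \ge 0$ and is nonzero only when $X_1 \le x - a_\ell$ and $x \le X_2 - b_{-\ell}$ (i.e.\ when $X_1$ is ``below'' $x$ and $X_2$ is ``above'' $x$ in the appropriate sense), the difference $f(X_2)-f(X_1)$ always has a fixed sign when $f$ is monotone on $\mathcal{S}(p)$. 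Hence the integrand has constant sign, and so does the integral; thus $\mathcal{L}_p^{\ell}f(x)$ does not change sign. (One should note the sign is determined by the direction of monotonicity of $f$.)

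For (2), $g_h^{p,\ell,\eta}$ is exactly the solution to the Stein equation from Lemma~\ref{lem:steinsol}, so $g_h^{p,\ell,\eta}(x) = \mathcal{L}_p^{\ell}h(x+\ell)/\mathcal{L}_p^{\ell}\eta(x+\ell)$. Apply Representation formula~I to both numerator and denominator at the point $x+\ell$:
\[
g_h^{p,\ell,\eta}(x) = \frac{\mathbb{E}\left[(h(X_2)-h(X_1))\,\Phi^{\ell}_p(X_1,x+\ell,X_2)\right]}{\mathbb{E}\left[(\eta(X_2)-\eta(X_1))\,\Phi^{\ell}_p(X_1,x+\ell,X_2)\right]}.
\]
Since $\eta$ is monotone, by the argument of part~(1) the random variable $(\eta(X_2)-\eta(X_1))\Phi^{\ell}_p(X_1,x+\ell,X_2)$ has constant sign, so the denominator is an integral of a signed quantity; the hypothesis $|h(x)-h(y)| \le k|\eta(x)-\eta(y)|$ gives $|h(X_2)-h(X_1)| \le k|\eta(X_2)-\eta(X_1)|$ pointwise, whence the absolute value of the numerator's integrand is bounded by $k$ times the absolute value of the denominator's integrand. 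Taking absolute values through the expectations (using that the denominator's integrand is sign-definite, so its expectation equals the expectation of its absolute value up to a global sign) yields $|g_h^{p,\ell,\eta}(x)| \le k$ for all $x \in \mathcal{S}(p)$. The Lipschitz special case follows by taking $\eta(x)=x$ and $k=1$.

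For (3), again use Representation formula~I at $x$: $|\mathcal{L}_p^{\ell}f(x)| \le \mathbb{E}[|f(X_2)-f(X_1)|\,\Phi^{\ell}_p(X_1,x,X_2)] \le 2\|f\|_{\mathcal{S}(p),\infty}\,\mathbb{E}[\Phi^{\ell}_p(X_1,x,X_2)]$. By independence of $X_1$ and $X_2$,
\[
\mathbb{E}[\Phi^{\ell}_p(X_1,x,X_2)] = \frac{1}{p(x)}\,\mathbb{E}[\chi^{\ell}(X_1,x)]\,\mathbb{E}[\chi^{-\ell}(x,X_2)],
\]
which is exactly the claimed bound. The main obstacle is really only bookkeeping: one must be careful that the integrand in the denominator of part~(2) is genuinely sign-definite (not merely sign-definite up to a null set) so that pulling the bound $k|\eta(X_2)-\eta(X_1)|$ through the expectation is legitimate, and one must confirm that the hypotheses of Lemma~\ref{lem:stIBP1} guarantee all expectations are finite and that $\mathcal{L}_p^{\ell}\eta(x+\ell)\ne 0$ on $\mathcal{S}(p)$ so the quotient is well defined; both follow from monotonicity of $\eta$ together with the non-degeneracy built into Assumption~A.
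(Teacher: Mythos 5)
Your proposal is correct and follows essentially the same route as the paper: all three parts are deduced from Representation formula I (Lemma \ref{lem:represnt}), with the sign-definiteness of $(f(X_2)-f(X_1))\Phi^{\ell}_p(X_1,x,X_2)$ for monotone $f$ driving parts (1) and (2), and the triangle inequality plus independence of $X_1,X_2$ giving part (3). (The only quibble is the harmless index slip $b_{-\ell}$ in place of $b_{\ell}$ when describing the support of $\chi^{-\ell}(x,\cdot)$.)
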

\begin{proof}
  Recall representation
  \eqref{eq:reprsntform} which states that
  \begin{equation*}
  -\mathcal{L}_p^{\ell}f(x)  =  \frac{1}{p(x)}\mathbb{E} \left[
       (f(X_2)-f(X_1)) 
  \chi^{\ell}(X_1, x)\chi^{-\ell}(x, X_2)\right]. 
\end{equation*}
\begin{enumerate}
\item If $f(x)$ is monotone then $f(X_2) - f(X_1)$ is of constant sign
  conditionally on  $\chi^{\ell}(X_1, x)\chi^{-\ell}(x, X_2) = {\mathbb{I} [ X_1 + a_\ell \le x \le X_2 - b_{\ell}]=1}$, 
  {because on this event, $X_1 \le X_2 - \mathbb{I} [\ell \ne 0]$.} Hence the first assertion follows. 
\item Suppose that the function $\eta$ is strictly decreasing.  By
  definition of $g$ we have, under the stated conditions,
  \begin{align*}
    |g(x)| &  = \left| \frac{-\mathbb{E} \left[
       (h(X_2)-h(X_1)) 
  \chi^{\ell}(X_1, x{+\ell})\chi^{-\ell}(x{+\ell}, X_2)\right]}{-\mathbb{E} \left[
       ( \eta(X_2)-\eta(X_1)) 
             \chi^{\ell}(X_1, x{+\ell})\chi^{-\ell}(x{+\ell}, X_2)\right] } \right|
    \\
    &  =  \frac{\left|-\mathbb{E} \left[
       (h(X_2)-h(X_1)) 
  \chi^{\ell}(X_1, x{+\ell})\chi^{-\ell}(x{+\ell}, X_2)\right]\right|}{\mathbb{E} \left[
       ( \eta(X_1)-\eta(X_2)) 
             \chi^{\ell}(X_1, x{+\ell})\chi^{-\ell}(x{+\ell}, X_2)\right] } \\
    & \le  \frac{\mathbb{E} \left[
       |h(X_2)-h(X_1)| 
  \chi^{\ell}(X_1, x{+\ell})\chi^{-\ell}(x{+\ell}, X_2)\right]}{\mathbb{E} \left[
       | \eta(X_2)-\eta(X_1)|
             \chi^{\ell}(X_1, x{+\ell})\chi^{-\ell}(x{+\ell}, X_2)\right] } \\
    & \le k
  \end{align*}
  for $x \in \mathcal{S}(p)$. 
\item By   \eqref{eq:reprsntform},
  \begin{align*}
\big|  \mathcal{L}_p^{\ell}f(x)\big|  {p(x)} &= \mathbb{E} \left[
       |f(X_2)-f(X_1)|
  \chi^{\ell}(X_1, x)\chi^{-\ell}(x, X_2)\right]\\
  &  \le 2 \|f\|_{{\mathcal{S}}(p), \infty}  \mathbb{E} \left[
    \chi^{\ell}(X_1, x)\chi^{-\ell}(x, X_2)\right]\\
    &\le 2 \|f\|_{{\mathcal{S}}(p), \infty}  \mathbb{E} \left[
    \chi^{\ell}(X_1, x) \right]\mathbb{E} \left[\chi^{-\ell}(x, X_2)\right]
  \end{align*}
  which leads to the conclusion. 
\end{enumerate}
\end{proof}

\begin{exm}\label{ex:gaussmills}
  If $p = \phi$ is the standard Gaussian with cdf $\Phi$, then
  $\ell =0$ and the third bound in Proposition \ref{prop:applirep}
  reduces to
  ${2} || f ||_\infty \frac{\Phi(x)( 1 - \Phi(x))}{\phi(x)}$. The
  ratio ${\Phi(x)\big(1-\Phi(x)\big)}/{\phi(x)}$ is closely related to
  Mill's ratio of the standard normal law.  The study of such a
  function is classical and much is known. For instance, we can apply
  \cite[Theorem 2.3]{baricz2008mills} to get
\begin{equation}\label{eq:5}
  \frac{1}{\sqrt{x^2+4}+x} \le
  \frac{\Phi(x)\big(1-\Phi(x)\big)}{\phi(x)} 
  \le \frac{4}{\sqrt{x^2+8}+3x} 
\end{equation}
for all $x \ge 0$.   Moreover, 
${\Phi(x)\big(1-\Phi(x)\big)}/{\phi(x)}  \le {\Phi(0)\big(1-\Phi(0)\big)}/{\phi(0)} =
1/2\sqrt{\pi/2} \approx 0.626$. In particular 
{Proposition \ref{prop:applirep} recovers the well-known bound}
$\| 
\mathcal{L}_{p}^{\ell}f \|_{\infty} \le \sqrt{\pi/2} \|f\|_{\infty}$,
see e.g.\ \cite[Theorem 3.3.1]{NP11}.
\end{exm}

\section{Covariance identities and inequalities}
\label{sec:variance-expansion}


We start with an easy lower bound inequality, which follows
immediately from Lemma~\ref{lma:skadj}.
\begin{prop}[Cramer-Rao type bound] \label{prop:cramerao} Let
  $g \in L^2(p)$.  For any $f \in \mathcal{F}_{\ell}^{(1)}(p)$ such
  that $\mathcal{T}_p^{\ell}f \in L^2(p)$ and the assumptions of
  Lemma~\ref{lma:skadj} are satisfied:
 \begin{equation}
\label{eq:13}
 \mathrm{Var}[g(X)]  \ge \frac{\mathbb{E}\left[ f(X) (\Delta^{-\ell}g(X))\right]^{2}}{\mathbb{E} \left[
\big(\mathcal{T}_p^{\ell}f(X)\big)^2 \right]}
  \end{equation}
  with equality if and only if there exist $\alpha, \beta$ real
  numbers such that 
  $g(x) = \alpha \mathcal{T}_p^{\ell}f(x) + \beta$ for all
  $x \in \mathcal{S}(p)$.    
\end{prop}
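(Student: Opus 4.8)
The plan is to combine the Stein integration-by-parts identity of Lemma~\ref{lma:skadj} with the Cauchy--Schwarz inequality, in the spirit of the Gaussian computation \eqref{eq:22}. First I would apply Lemma~\ref{lma:skadj}, whose hypotheses are part of the standing assumptions, to obtain $\mathbb{E}[(\mathcal{T}_p^{\ell}f(X))g(X)] = -\mathbb{E}[f(X)\Delta^{-\ell}g(X)]$. Then, since $f \in \mathcal{F}_{\ell}^{(1)}(p)$, Lemma~\ref{def:can_class} gives $\mathcal{T}_p^{\ell}f \in \mathcal{F}^{(0)}(p)$, that is $\mathbb{E}[\mathcal{T}_p^{\ell}f(X)] = 0$; hence the left-hand side above is exactly the covariance $\mathrm{Cov}[\mathcal{T}_p^{\ell}f(X), g(X)]$, and moreover $\mathrm{Var}[\mathcal{T}_p^{\ell}f(X)] = \mathbb{E}[(\mathcal{T}_p^{\ell}f(X))^2]$.

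Next I would apply Cauchy--Schwarz to this covariance, which is legitimate because $g \in L^2(p)$ and $\mathcal{T}_p^{\ell}f \in L^2(p)$ by hypothesis: this yields $\mathbb{E}[f(X)\Delta^{-\ell}g(X)]^2 = \mathrm{Cov}[\mathcal{T}_p^{\ell}f(X), g(X)]^2 \le \mathbb{E}[(\mathcal{T}_p^{\ell}f(X))^2]\,\mathrm{Var}[g(X)]$. Assuming $\mathbb{E}[(\mathcal{T}_p^{\ell}f(X))^2] > 0$ (if it vanishes, then $\mathcal{T}_p^{\ell}f \equiv 0$ $p$-almost everywhere, so by the identity of Lemma~\ref{lma:skadj} also $\mathbb{E}[f(X)\Delta^{-\ell}g(X)] = 0$ and \eqref{eq:13} holds trivially with the convention $0/0 = 0$), dividing through gives \eqref{eq:13}.

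For the equality statement I would use the standard fact that Cauchy--Schwarz is saturated precisely when the two $L^2$ random variables involved are linearly dependent; here these are $\mathcal{T}_p^{\ell}f(X)$ and the centred variable $g(X) - \mathbb{E}[g(X)]$. Since $\mathcal{T}_p^{\ell}f$ is not $p$-almost everywhere zero, this forces $g(x) - \mathbb{E}[g(X)] = \alpha\,\mathcal{T}_p^{\ell}f(x)$ for $p$-almost every $x$ and some $\alpha \in \R$, i.e.\ $g(x) = \alpha\,\mathcal{T}_p^{\ell}f(x) + \beta$ with $\beta = \mathbb{E}[g(X)]$; conversely any such $g$ clearly attains equality. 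I do not anticipate a genuine obstacle here: the argument is essentially one line once the mean-zero property of $\mathcal{T}_p^{\ell}f$ is in hand, and the only care needed is the bookkeeping around the degenerate case $\mathrm{Var}[\mathcal{T}_p^{\ell}f(X)] = 0$ and carrying the ``$p$-almost everywhere'' qualifier correctly through the equality clause.
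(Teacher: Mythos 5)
Your proof is correct and follows essentially the same route as the paper: apply the Stein IBP identity of Lemma~\ref{lma:skadj}, use that $\mathcal{T}_p^{\ell}f$ has $p$-mean zero (Lemma~\ref{def:can_class}) to recentre $g$, and conclude by the Cauchy--Schwarz inequality. Your extra bookkeeping for the degenerate case $\mathbb{E}[(\mathcal{T}_p^{\ell}f(X))^2]=0$ and the explicit verification of the equality clause (which the paper's proof leaves implicit) are welcome refinements rather than a different method.
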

\begin{proof}
  The lower bound \eqref{eq:13} follows from the fact that
  $\mathcal{T}_p^{\ell} f(X) \in \mathcal{F}^{(0)}(p)$ for all
  $f \in \mathcal{F}^{{(1)}}_{\ell}(p)$ {by Lemma
    \ref{def:can_class}}.
  Therefore, from \eqref{eq:3}, we have
\begin{align*}
\left\{  \mathbb{E} \left[  f(X) \big(\Delta^{-\ell} g(X) \big) \right]\right\}^2 & =
\left\{   \mathbb{E}
 \left[
 \big(\mathcal{T}_p^\ell
 f(X)
  \big)
 g(X)
\right] \right\} ^2  
 = \left\{  \mathbb{E} 
  \left[
  \big(\mathcal{T}_p^\ell
   f(X)
 \big)
(g(X) - \mathbb{E}  [g(X)]) 
 \right]\right\}^2 \\
& \le \mathbb{E} \left[
\big(\mathcal{T}_p^{\ell}f(X)\big)^2 \right] \mathrm{Var}[g(X)]
\end{align*}
by the Cauchy-Schwarz inequality. 
\end{proof}

Upper bounds require some more work. We start with an easy
consequence of our framework.

\begin{cor}[First order covariance
  identities]\label{prop:stein-covar-ident} 
  For all $f, g$ that jointly satisfy the assumptions of
  Lemma~\ref{lem:stIBP1}, we have
\begin{align}
   \mathrm{Cov}[f(X), g(X)] 
& = \mathbb{E} \left[ \Delta^{-\ell} f(X)
     \frac{ K_p^{\ell}(X, X')}{p(X)p(X')}\Delta^{-\ell}
                                g(X')\right]. \label{eq:14}
\end{align}
{Moreover, if choice $f = \mathrm{Id}$ is allowed, then 
 \begin{equation}
    \label{eq:25b}
    \mathrm{Cov}[X, g(X)] =  \mathbb{E} \left[ \tau_p^\ell(X) \Delta^{-\ell}g(X) 
    \right].  
  \end{equation}
}
\end{cor}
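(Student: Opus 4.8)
The plan is to read \eqref{eq:14} off the Stein integration‑by‑parts formula of Lemma~\ref{lem:stIBP1} after inserting the kernel representation of the inverse Stein operator established in Lemma~\ref{lem:can_inverserep2}. First I would record that, under the assumptions of Lemma~\ref{lem:stIBP1} --- which, via Proposition~\ref{prop:ibp2}, in particular force $f,g,fg\in L^1(p)$, so that the covariance is well defined --- the right‑hand side of \eqref{eq:21} is exactly $\mathrm{Cov}[f(X),g(X)]$. Hence
\[
  \mathrm{Cov}[f(X),g(X)] = \mathbb{E}\bigl[ -\mathcal{L}_p^{\ell}f(X)\,\Delta^{-\ell}g(X)\bigr].
\]

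Next I would substitute representation \eqref{eq:24}. Since $f\in L^1(p)\cap\mathrm{dom}(\Delta^{-\ell})$, Lemma~\ref{lem:can_inverserep2} applies and gives, for each $x\in\mathcal{S}(p)$ and with $X'$ an independent copy of $X$,
\[
  -\mathcal{L}_p^{\ell}f(x) = \mathbb{E}\Bigl[ \frac{K_p^{\ell}(X',x)}{p(X')p(x)}\,\Delta^{-\ell}f(X')\Bigr].
\]
Plugging this into the previous display, conditioning on $X$ and interchanging the two expectations yields $\mathrm{Cov}[f(X),g(X)] = \mathbb{E}\bigl[\Delta^{-\ell}f(X')\,K_p^{\ell}(X',X)/(p(X')p(X))\,\Delta^{-\ell}g(X)\bigr]$; relabelling $X\leftrightarrow X'$ (and noting that $K_p^{\ell}$ is symmetric, by Lemma~\ref{lem:can_inverserep2}) produces \eqref{eq:14}.

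For \eqref{eq:25b} I would simply specialise $f=\mathrm{Id}$. One checks immediately that $\Delta^{-\ell}\mathrm{Id}\equiv 1$ for every $\ell\in\{-1,0,1\}$, while $-\mathcal{L}_p^{\ell}(\mathrm{Id})=\tau_p^{\ell}$ by Definition~\ref{def:kernel}; hence, whenever $\mathrm{Id}$ is an admissible choice of $f$, \eqref{eq:21} reads $\mathrm{Cov}[X,g(X)] = \mathbb{E}\bigl[-\mathcal{L}_p^{\ell}(\mathrm{Id})(X)\,\Delta^{-\ell}g(X)\bigr] = \mathbb{E}\bigl[\tau_p^{\ell}(X)\,\Delta^{-\ell}g(X)\bigr]$. (Equivalently, put $\Delta^{-\ell}f\equiv 1$ in \eqref{eq:14} and recognise $\tau_p^{\ell}(x)=\mathbb{E}[K_p^{\ell}(X,x)/(p(X)p(x))]$ after conditioning on $X'$.)

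The only genuinely delicate step is the interchange of expectations above, i.e.\ the application of Fubini's theorem to the double integral of the absolute value, $\mathbb{E}\,\mathbb{E}\bigl[\,|\Delta^{-\ell}f(X')|\,K_p^{\ell}(X',X)/(p(X')p(X))\,|\Delta^{-\ell}g(X)|\,\bigr]$. I expect this to be licensed precisely by the integrability requirement $\mathcal{L}_p^{\ell}f\cdot\Delta^{-\ell}g\in L^1(p)$ that is built into Lemma~\ref{lem:stIBP1} (equation \eqref{eq:18}), together with the positivity $K_p^{\ell}\ge 0$ from Lemma~\ref{lem:can_inverserep2}, which lets one bound the inner integral in terms of $|\mathcal{L}_p^{\ell}f|$; everything else in the argument is purely formal bookkeeping.
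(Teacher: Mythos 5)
Your proposal is correct and follows essentially the same route as the paper: both derive $\mathrm{Cov}[f(X),g(X)]=\mathbb{E}\bigl[-\mathcal{L}_p^{\ell}f(X)\,\Delta^{-\ell}g(X)\bigr]$ from Lemma~\ref{lem:stIBP1}, obtain \eqref{eq:25b} by specialising $f=\mathrm{Id}$ so that $-\mathcal{L}_p^{\ell}(\mathrm{Id})=\tau_p^{\ell}$, and then insert representation \eqref{eq:24} and remove the conditioning to reach \eqref{eq:14}. Your extra remarks on the Fubini interchange and the symmetry of $K_p^{\ell}$ only make explicit what the paper leaves implicit in the phrase ``after removing the conditioning.''
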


\begin{rmk}\label{rem:hoeffding}
  Identity \eqref{eq:14} is provided in \cite{menz2013uniform} (see
  their equation (11)) in the case $\ell = 0$ for a log-concave
  density. Some of the history of this identity, including the
  connection with a classical identity of Hoeffding
  \cite{hoffding1940masstabinvariante}, is provided in \cite[Section
  2]{saumard2018efron}. The earliest version of the same identity
  (still for $\ell = 0$) we have found in
  \cite{cuadras2002covariance}, along with applications to measures of
  correlation as well as further references. A similar identity is
  provided in \cite{menz2013uniform}, without explicit conditions; a
  clear statement is given in \cite[Corollary 2.2]{saumard2018efron}
  where the identity is proved for absolutely continuous $f \in L^r$
  and $g \in L^s$ with conjugate exponents. Our approach shows that it
  suffices to impose regularity on one of the functions for the
  identity to hold. 
\end{rmk}

\begin{proof}
  Let $\bar{f}(x) = f(x) - \mathbb{E}[f(X)]$. {{Note that
      $\Delta^{\ell} {\bar{f}} = \Delta^\ell f.$}} To obtain
  \eqref{eq:14} we start from \eqref{eq:21} and note that if $f, g$
  satisfy the assumptions of Lemma \ref{lem:stIBP1}, then
   \begin{equation}
    \label{eq:25}
    \mathrm{Cov}[f(X), g(X)] =  \mathbb{E} \left[ - {{ \{}}
      \mathcal{L}_p^{\ell}f(X) {{ \}}}\Delta^{-\ell}g(X) 
    \right].  
  \end{equation}
  From this equation, \eqref{eq:25b} follows immediately. 
  Applying \eqref{eq:24} we obtain
\begin{align*}
  \mathrm{Cov}[f(X), g(X)]  & = \mathbb{E}
                              \bigg[ \mathbb{E}\bigg[\frac{K_p(X',
                              X)}{p(X')p(X)}\Delta^{-\ell}f(X') \, |
                              \, X\bigg]\Delta^{-\ell}g(X)
                              \bigg]   
\end{align*}
which gives the claim after removing the conditioning. 
\end{proof}

\begin{exm} {Example \ref{ex:binom2} and identity \eqref{eq:25b}
    give the following covariance identities.}
\begin{itemize}
\item \label{ex:binom4} Binomial distribution:  
 For all functions $g : \Z \to \R$ that are
bounded on $[0, n]$, 
\begin{equation*}
\mathrm{Cov}[X, g(X)] = \mathbb{E} \left[ (1-{\theta})X
    \Delta^{-} g(X) \right] = {\theta}  \mathbb{E} \left[ (n-X)
    \Delta^+g(X)\right] .
\end{equation*}
Combining the two identities we also arrive at
\begin{equation*} 
\mathrm{Cov}[X, g(X)]= \mathrm{Var}[X]
  \mathbb{E}[\nabla_{\mathrm{bin}(n, {\theta})}g(X)]
\end{equation*}
with $\nabla_{\mathrm{bin}(n, {\theta})}$ the ``natural'' binomial
gradient
$\nabla_{\mathrm{bin}(n, {\theta})} g(x) = \frac{x}{n} \Delta^-g(x) +
\frac{n-x}{n} \Delta^+g(x)$ from \cite{hillion2011natural}.
\item\label{ex:beta2} Beta distribution: 
 For all absolutely continuous $g$ such that
$ \mathbb{E} \left[ | X (1-X) g'(X)| \right] <\infty$,
\begin{equation*}
\mathrm{Cov}[X, g(X)] = \frac{1}{\alpha+\beta}  \mathbb{E} \left[ X (1-X)
    g'(X) \right] .
\end{equation*}
\end{itemize} 
\end{exm}

It is of interest to work as in \cite{K85} to obtain a corresponding
upper bound, which would provide some ``weighted Poincar\'e
inequality'' such as those described in \cite{saumard2018weighted}.
The representation formulae \eqref{eq:14} turns out to simplify the
work considerably.

\begin{thm}\label{thm:klaassenbounds}  Fix  $h \in L^1(p)$ a
  decreasing function. For all $f, g$ which     $\mathcal{T}_p^{\ell}c \in L^2(p)$satisfy the assumptions of
  Lemma \ref{lem:stIBP1} we have
  \begin{equation}
    \label{eq:KlaassenCov}
    \left|   \mathrm{Cov}[f(X), g(X)]  \right| \le \sqrt{\mathbb{E}\left[ (\Delta^{-\ell}f(X))^2
        \frac{-\mathcal{L}_p^{\ell}h(X)}{\Delta^{-\ell}h(X)} \right]}\sqrt{\mathbb{E}\left[ (\Delta^{-\ell}g(X))^2
        \frac{-\mathcal{L}_p^{\ell}h(X)}{\Delta^{-\ell}h(X)} \right]} 
  \end{equation}
  with equality if and only if there exist $\alpha_i, i=1, \ldots, 4$
  real numbers such that $f(x) = \alpha_{1} h(x) + \alpha_2$ and
  $g(x) = \alpha_{3} h(x) + \alpha_4$ for all $x \in
  \mathcal{S}(p)$. 
\end{thm}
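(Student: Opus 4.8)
The plan is to apply a suitably weighted Cauchy--Schwarz inequality to the first-order covariance identity \eqref{eq:14} of Corollary~\ref{prop:stein-covar-ident}, which under the stated hypotheses reads
\[
\mathrm{Cov}[f(X), g(X)] = \mathbb{E}\left[\Delta^{-\ell}f(X)\,\frac{K_p^{\ell}(X,X')}{p(X)p(X')}\,\Delta^{-\ell}g(X')\right]
\]
with $X, X'$ independent copies of $X\sim p$. Set $w(x) = -\Delta^{-\ell}h(x)$. Since $h$ is decreasing, $w\ge 0$ and $\Delta^{-\ell}h\le0$, while Proposition~\ref{prop:applirep}(1) gives that $\mathcal{L}_p^{\ell}h$ does not change sign; it is then immediate from \eqref{eq:24} applied to $h$ that $\mathcal{L}_p^{\ell}h\ge0$, so the weight $-\mathcal{L}_p^{\ell}h(x)/\Delta^{-\ell}h(x) = \mathcal{L}_p^{\ell}h(x)/w(x)$ is well defined and nonnegative (as is implicit in the statement, we take $\Delta^{-\ell}h(x)<0$ for $\mu$-a.e.\ $x\in\mathcal{S}(p)$, i.e.\ $w>0$). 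If either expectation in \eqref{eq:KlaassenCov} is infinite there is nothing to prove, so assume both are finite.

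The key step is the factorisation, legitimate because $K_p^{\ell}\ge0$ (Lemma~\ref{lem:can_inverserep2}), of the integrand of \eqref{eq:14} as a product $A\cdot B$ with
\[
A = \Delta^{-\ell}f(X)\sqrt{\frac{K_p^{\ell}(X,X')}{p(X)p(X')}\cdot\frac{w(X')}{w(X)}},\qquad B = \Delta^{-\ell}g(X')\sqrt{\frac{K_p^{\ell}(X,X')}{p(X)p(X')}\cdot\frac{w(X)}{w(X')}},
\]
so that $AB$ is exactly the integrand of \eqref{eq:14} and $\mathbb{E}[AB] = \mathrm{Cov}[f(X),g(X)]$. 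Cauchy--Schwarz gives $|\mathrm{Cov}[f(X),g(X)]|\le\sqrt{\mathbb{E}[A^2]}\sqrt{\mathbb{E}[B^2]}$. To evaluate $\mathbb{E}[A^2]$ I would condition on $X$ and use, by symmetry of $K_p^{\ell}$ together with \eqref{eq:24} applied to $h$,
\[
\mathbb{E}\!\left[\frac{K_p^{\ell}(x,X')}{p(x)p(X')}\,w(X')\,\Big|\,X=x\right] = \frac{1}{p(x)}\int K_p^{\ell}(u,x)\,w(u)\,\mu(\mathrm{d}u) = -\frac{1}{p(x)}\int K_p^{\ell}(u,x)\,\Delta^{-\ell}h(u)\,\mu(\mathrm{d}u) = \mathcal{L}_p^{\ell}h(x),
\]
whence $\mathbb{E}[A^2] = \mathbb{E}\big[(\Delta^{-\ell}f(X))^2\,\mathcal{L}_p^{\ell}h(X)/w(X)\big] = \mathbb{E}\big[(\Delta^{-\ell}f(X))^2\,(-\mathcal{L}_p^{\ell}h(X))/\Delta^{-\ell}h(X)\big]$. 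Conditioning on $X'$ instead and using the symmetry of $K_p^{\ell}$ once more gives $\mathbb{E}[B^2] = \mathbb{E}\big[(\Delta^{-\ell}g(X))^2\,(-\mathcal{L}_p^{\ell}h(X))/\Delta^{-\ell}h(X)\big]$, which is precisely \eqref{eq:KlaassenCov}.

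For the equality case, equality in Cauchy--Schwarz forces $A=\lambda B$ $\mathbb{P}$-a.s.\ for some constant $\lambda$; cancelling the common factor $\sqrt{K_p^{\ell}(X,X')/(p(X)p(X'))}$ on the event $\{K_p^{\ell}(X,X')>0\}$ and rearranging yields $\Delta^{-\ell}f(X)/w(X) = \lambda\,\Delta^{-\ell}g(X')/w(X')$. Since by \eqref{eq:1} the set of pairs $(x,x')$ with $K_p^{\ell}(x,x')>0$ is rich enough to separate the two variables, both $\Delta^{-\ell}f/w$ and $\Delta^{-\ell}g/w$ must be $\mu$-a.e.\ constant on $\mathcal{S}(p)$; equivalently $\Delta^{-\ell}(f+c_1h)=\Delta^{-\ell}(g+c_2h)=0$ $\mu$-a.e., and since $\mathcal{S}(p)$ is an interval (resp.\ a block of consecutive integers) by Assumption A this forces $f=\alpha_1h+\alpha_2$ and $g=\alpha_3h+\alpha_4$ on $\mathcal{S}(p)$; the converse is clear. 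The routine parts are the sign bookkeeping and the conditional-expectation computation, and the inequality itself is then one line of weighted Cauchy--Schwarz; the only delicate point is making the equality analysis rigorous, namely verifying via the explicit form \eqref{eq:1} of $K_p^{\ell}$ that $\{K_p^{\ell}(x,x')>0\}$ genuinely contains enough pairs to deduce constancy.
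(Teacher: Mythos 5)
Your argument is correct and is essentially the paper's own proof: you start from the covariance representation \eqref{eq:14}, split the kernel factor $K_p^{\ell}(X,X')/(p(X)p(X'))$ between the two Cauchy--Schwarz factors weighted by $-\Delta^{-\ell}h$ exactly as the paper does, and then evaluate the resulting conditional expectations via \eqref{eq:24} to recover the weight $-\mathcal{L}_p^{\ell}h/\Delta^{-\ell}h$. Your treatment of the sign of the weight and of the equality case is somewhat more careful than the paper's (which asserts the saturation condition without detailing why positivity of $K_p^{\ell}$ on enough pairs forces constancy), but the route is the same.
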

 
\begin{proof}
  We simply apply \eqref{eq:14} and the Cauchy-Schwarz inequality to
  obtain
\begin{align*}
 &  \left| \mathrm{Cov}[f(X), g(X)] \right|  = 
\left|        \mathbb{E} \left[\Delta^{-\ell} f(X)   \frac{K_p^{\ell}(X, X')}{p(X)p(X')} \Delta^{-\ell} g(X')
       \right] \right|  \\
  & =   \left| \mathbb{E} \left[ \left\{ \frac{\Delta^{-\ell}
    f(X)}{\sqrt{-\Delta^{-\ell} h(X)}}  \sqrt{-\frac{K_p^{\ell}(X,
    X')}{p(X)p(X')} \Delta^{-\ell}h(X')}  \right\}   \left\{ \frac{\Delta^{-\ell}
    g(X')}{\sqrt{-\Delta^{-\ell} h(X')}}  \sqrt{-\frac{K_p^{\ell}(X,
    X')}{p(X)p(X')} \Delta^{-\ell}h(X)} \right\}\right] \right|\\
& \le \sqrt{\mathbb{E} \left[ \frac{(\Delta^{-\ell}
    f(X))^2}{\Delta^{-\ell} h(X)} \frac{K_p^{\ell}(X,
    X')}{p(X)p(X')} \Delta^{-\ell}h(X') \right]} \sqrt{\mathbb{E} \left[ \frac{(\Delta^{-\ell}
    g(X))^2}{\Delta^{-\ell} h(X)} \frac{K_p^{\ell}(X,
    X')}{p(X)p(X')} \Delta^{-\ell}h(X') \right]};
\end{align*}
using \eqref{eq:24} leads to the inequality. 

The only part of the claim that remains to be proved concerns the
saturation condition in the inequality. This follows from the
Cauchy-Schwarz inequality
which is an equality if and only if
$\Delta^{-\ell}f(x)/\Delta^{-\ell}h(x) \propto
\Delta^{-\ell}g(x')/\Delta^{-\ell}h(x') $ is constant throughout
$\mathcal{S}(p)$.  This is only possible under the stated condition.
\end{proof}

\begin{rmk}
{Theorem \ref{thm:klaassenbounds} can be refined using the exact
  expression for the remainder in the Cauchy-Schwarz inequality, given
  by the  Lagrange-type identity
  \begin{align*}
    & ( \mathbb{E} [ f(X_1, X_2) g(X_1, X_2)] )^2 \\
    & =\mathbb{E} [ f^2(X_1, X_2) ] \mathbb{E} [ g^2(X_1, X_2)]
- \frac12 \mathbb{E} [ (f(X_1, X_2) g(X_3, X_4) - f(X_3, X_4) g(X_1, X_2))^2 ]
  \end{align*}
with $X_1, X_2, X_3, X_4$ independent copies with density $p$ and $f, g \in L^2(p)$. }
Fix  $h \in L^1(p)$ a 
  decreasing function such that $  \| h \|_{{\mathcal{S}}(p), \infty}  <\infty$. For all $f, g$ which satisfy the assumptions of
  Lemma \ref{lem:stIBP1} we have 
  \begin{eqnarray*}
    \label{eq:KlaassenCov2}
(  \mathrm{Cov}[f(X), g(X)] )^2 &=& 
{\mathbb{E}\left[ (\Delta^{-\ell}f(X))^2
      \frac{-\mathcal{L}_p^{\ell}h(X)}{\Delta^{-\ell}h(X)} \right]}{\mathbb{E}\left[ (\Delta^{-\ell}g(X))^2
      \frac{-\mathcal{L}_p^{\ell}h(X)}{\Delta^{-\ell}h(X)} \right]} - \frac{1}{2}R(f,g,h) 
  \end{eqnarray*}
  with
  \begin{align*}
 R(f,g,h)  & =    \mathbb{E} \left[\left( \Delta^{-\ell}
             f(X_1) \Delta^{-\ell} g(X_4)                                               
            -  \Delta^{-\ell}
             f(X_3) \Delta^{-\ell} g(X_2)\frac{{
             \Delta^{-\ell}h(X_1)
             \Delta^{-\ell}h(X_4)}}{{\Delta^{-\ell} 
             h(X_2)\Delta^{-\ell} h(X_3)}}
    \right)^2 \right. \\ 
    &\quad \qquad \qquad  \left. \frac{{\Delta^{-\ell}h(X_2)
                                   \Delta^{-\ell}h(X_3)}}{{\Delta^{-\ell}
             h(X_1)\Delta^{-\ell} h(X_4)}}  \frac{K_p^{\ell}(X_1, 
             X_2)K_p^{\ell}(X_3, 
             X_4)}{p(X_1)p(X_2)p(X_3)p(X_4)} \right].
  \end{align*}
In particular when $h(x) = x$ the remainder term simplifies to 
\begin{align*}
 R(f,g,h)  & =    \mathbb{E} \left[\left( \Delta^{-\ell}
             f(X_1) \Delta^{-\ell} g(X_4)                                               
            -  \Delta^{-\ell}
             f(X_3) \Delta^{-\ell} g(X_2)
    \right)^2  \frac{K_p^{\ell}(X_1, 
             X_2)K_p^{\ell}(X_3, 
             X_4)}{p(X_1)p(X_2)p(X_3)p(X_4)} \right].
  \end{align*}
\end{rmk}

{Combining Proposition \ref{prop:cramerao} and Theorem
\ref{thm:klaassenbounds} (applied with $f=g$) we arrive at the
following result (applied to a smaller class of functions $h$) which,
as we shall argue below, share a similar flavour to the upper and lower
bounds from Theorem \ref{thm:klabou}.}
\begin{cor}[Klaassen bounds, revisited] \label{cor:klaaaaaassen}
  {For any decreasing function $h \in L^2(p)$ and all $g$ such
  that Lemma \ref{lem:stIBP1} applies (with $f = g$), we
  have
    \begin{equation}
    \label{eq:KlaassenVar}
  \frac{\mathbb{E}\left[ -\mathcal{L}_p^{\ell}h(X) (\Delta^{-\ell}g(X))\right]^{2}}{\mathrm{Var}
\big(h(X)\big)} \le  \mathrm{Var}[g(X)]   \le \mathbb{E}\left[ (\Delta^{-\ell}g(X))^2
      \frac{-\mathcal{L}_p^{\ell}h(X)}{\Delta^{-\ell}h(X)} \right].
  \end{equation}
  }
Equality in the upper bound holds if and only if there
exists constants $\alpha, \beta$ such that $g(x) = \alpha h(x) +
\beta$. 
\end{cor}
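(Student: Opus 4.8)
The plan is to derive the two inequalities independently, each as a direct specialization of a result already in hand: the upper bound is Theorem~\ref{thm:klaassenbounds} applied with $f=g$, and the lower bound is Proposition~\ref{prop:cramerao} applied with the particular choice $f=\mathcal{L}_p^{\ell}h$.

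For the lower bound, first I would observe that since $h\in L^2(p)\subseteq L^1(p)$, Lemma~\ref{lem:inverse} gives $\mathcal{L}_p^{\ell}h\in\mathcal{F}^{(1)}_{\ell}(p)$ and $\mathcal{T}_p^{\ell}(\mathcal{L}_p^{\ell}h)(x)=h(x)-\mathbb{E}[h(X)]$ on $\mathcal{S}(p)$; in particular $\mathcal{T}_p^{\ell}(\mathcal{L}_p^{\ell}h)\in L^2(p)$ with $\mathbb{E}\big[(\mathcal{T}_p^{\ell}(\mathcal{L}_p^{\ell}h)(X))^2\big]=\mathrm{Var}(h(X))$. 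The remaining hypotheses of Proposition~\ref{prop:cramerao} — that the pair $(f,g)=(\mathcal{L}_p^{\ell}h,g)$ satisfies the conditions of Lemma~\ref{lma:skadj} — unwind precisely to $\mathcal{L}_p^{\ell}h(\cdot)\,g(\cdot-\ell)\in\mathcal{F}^{(1)}_{\ell}(p)$ and $\mathcal{L}_p^{\ell}h(\cdot)\,\Delta^{-\ell}g(\cdot)\in L^1(p)$, which are exactly the hypotheses of Lemma~\ref{lem:stIBP1} read with the function in the role of "$f$" taken to be $h$; these hold by assumption. Proposition~\ref{prop:cramerao} then yields $\mathrm{Var}[g(X)]\ge\mathbb{E}[\mathcal{L}_p^{\ell}h(X)\Delta^{-\ell}g(X)]^2/\mathrm{Var}(h(X))$, and since the numerator is squared its sign is irrelevant, giving the stated lower bound.

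For the upper bound I would invoke Theorem~\ref{thm:klaassenbounds} with $f=g$: this is legitimate because $h$ is a decreasing function in $L^1(p)$ and the pair $(g,g)$ satisfies the assumptions of Lemma~\ref{lem:stIBP1} by hypothesis. Since $\mathrm{Cov}[g(X),g(X)]=\mathrm{Var}[g(X)]\ge 0$, the two square-root factors on the right-hand side of \eqref{eq:KlaassenCov} coincide, and their product equals $\mathbb{E}\big[(\Delta^{-\ell}g(X))^2\,(-\mathcal{L}_p^{\ell}h(X))/\Delta^{-\ell}h(X)\big]$, which is exactly the claimed bound. The equality statement is inherited from the saturation clause of Theorem~\ref{thm:klaassenbounds}: equality forces $f(x)=\alpha_1 h(x)+\alpha_2$ and $g(x)=\alpha_3 h(x)+\alpha_4$, and with $f=g$ this collapses to $g(x)=\alpha h(x)+\beta$; the converse (that such a $g$ saturates the bound) is immediate by substituting and scaling both sides.

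The underlying calculations are trivial; the one point genuinely requiring care — and the main obstacle in a fully rigorous write-up — is the bookkeeping that the blanket hypothesis "Lemma~\ref{lem:stIBP1} applies with $f=g$", together with $h\in L^2(p)$ decreasing, really does supply every integrability and boundary condition needed to invoke both Proposition~\ref{prop:cramerao} (through Lemma~\ref{lma:skadj} with $f=\mathcal{L}_p^{\ell}h$) and Theorem~\ref{thm:klaassenbounds}. It is also worth recording here that, $h$ being decreasing, Proposition~\ref{prop:applirep}(1) ensures $-\mathcal{L}_p^{\ell}h$ and $\Delta^{-\ell}h$ have the same (non-positive) sign, so the weight $(-\mathcal{L}_p^{\ell}h)/\Delta^{-\ell}h$ is non-negative and the expectations on the right-hand sides are well defined in $[0,\infty]$.
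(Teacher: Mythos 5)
Your proof is correct and takes essentially the same route as the paper: the lower bound is Proposition~\ref{prop:cramerao} applied with $f=-\mathcal{L}_p^{\ell}h$ (the sign being immaterial after squaring, as you note), and the upper bound together with the equality clause is Theorem~\ref{thm:klaassenbounds} with $f=g$. Your extra bookkeeping of the hypotheses and of the sign of the weight $(-\mathcal{L}_p^{\ell}h)/\Delta^{-\ell}h$ simply makes explicit what the paper's terser argument leaves implicit.
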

{
\begin{proof}
For the lower bound, we apply Proposition \ref{prop:cramerao}  with $f(x) = - \mathcal{L}_p^\ell h(x)$ so that 
$\mathcal{T}_p^\ell c(x) = h(x) -  - \mathbb{E}[h(X)].$ For the upper bound we use Theorem 
\ref{thm:klaassenbounds} with $f=g$.
\end{proof}
}


\begin{exm}[Pearson and Ord families] \label{def:pearsord}
  { { Tables \ref{tab:Ord1}, \ref{tab:Ord2}, and
      \ref{tab:Pearson} present the results for random variables whose
      distribution belongs to the Pearson and Ord families of
      distributions. A random variable $X \sim p$ belongs to the
      \emph{integrated Pearson family} if $X$ is absolutely continuous
      and there exist $\delta, \beta, \gamma \in \R$ not all equal to
      0 such that
      $\tau_p^{\ell}(x)\big(:=-\mathcal{L}_p^{\ell}(\mathrm{Id})\big)=\delta
      x^2 + \beta x + \gamma$ for all $x \in \mathcal{S}(p)$. {{Similarly,}}
      $X \sim p$ belongs to the cumulative Ord family if $X$ is
      discrete and there exist $\delta, \beta, \gamma \in \R$ not all
      equal to 0 such that
      $\tau_p^{\ell}(x)\big(:=-\mathcal{L}_p^{\ell}(\mathrm{Id})\big)=\delta
      x^2 + \beta x + \gamma$ for all $x \in \mathcal{S}(p)$.  The
      bounds for these distributions generalize the results e.g.\ from
      \cite{APP07}.}}
\end{exm}

{
  \begin{rmk}[About the connection with Klaassen's bounds] The bounds in Corollary \ref{cor:klaaaaaassen} and those
    from Theorem \ref{thm:klabou} are obviously of a similar flavour.
    Upon closer inspection, however, the connection is not
    transparent. In order to clarify this point, we follow \cite{K85}
    and restrict our attention to kernels of the form
    \begin{align*}
       \rho_{\zeta}^+(x, y) = \mathbb{I}[\zeta < y \le x] - \mathbb{I}[x < y
                     \le \zeta] \mbox{ and }
       \rho_{\zeta}^-(x, y) = \mathbb{I}[\zeta \le y < x] - \mathbb{I}[x \le y
                     < \zeta]
    \end{align*}
   for some $\zeta \in \R$. In our notations, these become
    $$\rho^{\ell}_{\zeta}(x,y) = \chi^{\ell}(\zeta, y)\chi^{-\ell}(y, x) -
    \chi^{\ell}(x, y)\chi^{-\ell}(y, \zeta)$$ for
    $\ell \in \left\{ -1, 0, 1 \right\}.$

    We first tackle the relation between the main arguments of the
    bounds, namely $G(x)$ and $g(x)$. Given a measurable function $g$,
    we mimic the statement of Theorem \ref{thm:klabou} and introduce
    the generalized primitive
    $G(x) = G_{\zeta}^{\ell}(x) := \int \rho_\zeta^{\ell}(x, y) g(y)
    \mu(\mathrm{d}y) + c$ with $c$ arbitrary, fixed w.l.o.g.\ to 0.
    Again in our notations, this becomes
\begin{equation*}
   G_{\zeta}^{\ell}(x) = \int_{\zeta+
   a_{\ell}}^{x-b_{\ell}} g(y) \mu(\mathrm{d}y) \mathbb{I}[\zeta<x] -  \int_{x+
   a_{\ell}}^{\zeta-b_{\ell}} g(y)
 \mu(\mathrm{d}y)\mathbb{I}[x<\zeta].
\end{equation*}
By construction, $\Delta^{-\ell}G_{\zeta}^{\ell}(x) = g(x)$ for all
$\zeta$ and all $\ell$, as expected. Nevertheless, in order for
$G_{\zeta}^\ell(x)$ to be well-defined, strong (joint) assumptions on
$g$ and $\zeta$ are required; for instance, if $g(x) = 1$ then $\zeta$
must be finite and $G_{\zeta, c}^{\ell}(x) = x - \zeta$ while if
$g(x)$ has $p$-mean 0 then the values $\zeta = \pm \infty$ are
allowed.

Next, we examine the connection between the lower bound \eqref{eq:klCR}
and the lower bound of \eqref{eq:KlaassenVar}.  Let $k \in L^2(p)$
have $p$-mean 0.  Then
\begin{align*}
  \mathbb{E}[k(X) \rho_{\zeta}^{\ell}(X, x)]  
  & =  \mathbb{E}[k(X) \chi^{-\ell}(x, X)] \chi^{\ell}(\zeta, x)
        - \mathbb{E}[k(X)\chi^{\ell}(X, x)]\chi^{-\ell}(x,\zeta)\\
  & = \mathbb{E}[k(X)]\chi^{\ell}(\zeta, x) 
      - \mathbb{E}[k(X) \chi^{\ell}(X,x)]\left(\chi^{\ell}(\zeta,x) + \chi^{-\ell}(x, \zeta) \right) \\
  & = - \mathbb{E}[k(X) \chi^{\ell}(X, x)] 
\end{align*}
so that
\begin{align*}
  K (x)  = \frac{1}{p(x)}\int \rho_{\zeta}^{\ell}(z, x) k(z) p(z) \mu(dz) = -\mathcal{L}_p^{\ell}k(x)
\end{align*}
and thus \eqref{eq:klCR} follows from  the  lower bound of
\eqref{eq:KlaassenVar}.

Finally, we consider the upper bounds \eqref{eq:klaass} and \eqref{eq:KlaassenVar}. Let $H(x) = H_{\zeta}^{\ell}(x)$
be a generalized primitive of some nonnegative function $h$. The same
manipulations as above lead to
\begin{align*}
  \frac{1}{p(x)} \int \rho_{\zeta}^{\ell}(z, x) H(z) p(z)
  \mu(\mathrm{d}z)   
  & = -\mathcal{L}_p^{\ell} h(x) - \frac{\mathbb{E}[H(X)]}{p(x)}
(P(x-a_{\ell})-\chi^{\ell}(\zeta, x)).
\end{align*}
If, following \cite{K85}, we choose $\zeta$ in such a way that
$\mathbb{E}[H(X)] = 0$ (this is equivalent to requiring
$ \int_{\zeta+ a_{\ell}}^b h(y) (1-P(y+b_{\ell})) \mu(\mathrm{d}y) =
\int_{a}^{\zeta- b_{\ell}}h(y) (P(y - a_{\ell})) \mu(\mathrm{d}y)$)
then we see that the upper bound in \eqref{eq:KlaassenVar} is 
equivalent to \eqref{eq:klaass}.

Of course there is some gain in generality at allowing for a general
kernel $\rho$ as in Theorem~\ref{thm:klabou}, though this comes at the
expense of readability: given a positive function $h$, understanding
the form of function $H$ is actually non trivial and our result
illuminates Klaassen's discovery by providing the connection with
Stein characterizations.
\end{rmk}
}


\section{About  the weights}
\label{sec:discussion}
The freedom of choice in the test functions $h$ appearing in {the}
bounds
{invite a} study {of} the impact of the choice of $h$ on the
validity and quality of the resulting inequalities.

\subsection{Score function and the Brascamp-Lieb inequality}
\label{sec:score-funct-brasc}

The form of the lower bound in Proposition \ref{prop:cramerao} encourages the
choice $f(x)= 1$.  This is only permitted if {the constant function} 
$1 \in \mathcal{F}_{\ell}^{(1)}(p)$ and
$\mathbb{E}\left[ \left(\mathcal{T}_p^{\ell}1(X)\right)^2\right]<\infty$; these are two strong
assumptions which exclude some natural targets such as e.g.\ the
exponential or beta distributions. If this choice is permitted, then
we reap the lower bound
\begin{align*}
  \mathrm{Var}[g(X)] \ge \frac{\mathbb{E}[\Delta^{-\ell}g(X)]}{I^{\ell}(p)}
\end{align*}
with
$I^{\ell}(p) = \left[ \left(\mathcal{T}_p^{\ell}1(X)\right)^2\right] 
$.

The function
$\mathcal{T}_p^{\ell}1(x) = \frac{\Delta^\ell (p(x))}{p(x)}$ is some form
of generalized score function and $I^{\ell}(p)$ a generalized Fisher
information. Indeed, if $\ell=0$ and $X\sim p$ is absolutely
continuous, then $\mathcal{T}_p^{\ell}1(x) = (\log p(x))'$ is exactly
the (location) score function of $p$ and $I^{(0)}(p)$ is none other
than the (location) Fisher information of $p$. More generally we note
that if $1 \in \mathcal{F}_{\ell}^{(1)}(p)$ then
$\mathbb{E}[\mathcal{T}_p^{\ell}1(X)]=0$ and, by Lemma \ref{lma:skadj}, it
satisfies
\begin{align*}
  \mathbb{E}[\mathcal{T}_p^{\ell}1(X) g(X)] = - \mathbb{E}[\Delta^{-\ell}g(X)]
\end{align*}
for all appropriate $g$; this further reinforces the analogy.

The corresponding upper bound from \eqref{eq:KlaassenCov} is obtained
for $h(x) = \mathcal{T}_p^{\ell}1(x)$ in
\eqref{eq:KlaassenVar}. Suppose that
$p(b^- + a_{\ell}) = p(a^+ - b_{\ell}) = 0$.  By construction,
$\mathcal{L}_p^{\ell}h(x) = \mathbb{I}_{\mathcal{S}(p)}(x)$. If we can
further suppose that $\mathcal{T}_p^{\ell}1(x)$ is a decreasing
function then
\begin{align*}
  \left| \mathrm{Cov}[f(X), g(X)] \right| \le
  \sqrt{\mathbb{E}\left[ 
      \frac{(\Delta^{-\ell}f(X))^2}{{-}\Delta^{-\ell}\mathcal{T}_p^{\ell}1(X)}
  \right]}\sqrt{\mathbb{E}\left[ 
  \frac{(\Delta^{-\ell}g(X))^2 }{{-}\Delta^{-\ell}\mathcal{T}_p^{\ell}1(X)} \right]}.
\end{align*}
Taking $g = f$ we deduce the following result whose continuous version
(i.e.\ the case $\ell = 0$) dates back to \cite{BrLi76}.

\begin{cor}[Brascamp-Lieb inequality]\label{cor:brascamplieb}
  Under the same conditions as Proposition \ref{prop:ottomenz1} we
  have
  \begin{equation}
    \label{eq:97}
      \frac{\mathbb{E}\left[(\Delta^{-\ell}g(X))\right]^{2}}{\mathbb{E}\left[\left(\mathcal{T}_p^{\ell}1(X)\right)^2\right]}
     \le     \mathrm{Var}[g(X)]   \le \mathbb{E}\left[
      \frac{(\Delta^{-\ell}g(X))^2}{{-}\Delta^{-\ell}\mathcal{T}_p^{\ell}1(X)}
    \right] 
  \end{equation}
  for all $g$ such that $\mathcal{T}_p^{\ell}1, g$ satisfy together
  the assumptions of Lemma \ref{lem:stIBP1}.
\end{cor}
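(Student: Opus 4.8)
The plan is to read \eqref{eq:97} as the specialisation to the single test function $h=\mathcal{T}_p^{\ell}1$ of the general lower and upper bounds already in hand, namely Proposition~\ref{prop:cramerao} and Theorem~\ref{thm:klaassenbounds} (equivalently Corollary~\ref{cor:klaaaaaassen}), exactly following the heuristic laid out in the paragraph preceding the statement; essentially all the work is in checking admissibility and in simplifying the resulting weight. For the left inequality, first apply Proposition~\ref{prop:cramerao} with $f\equiv 1$: the hypothesis that the constant $1$ lies in $\mathcal{F}_{\ell}^{(1)}(p)$ makes $f$ an admissible choice, and the joint hypothesis of the statement — that $\mathcal{T}_p^{\ell}1$ and $g$ satisfy together the assumptions of Lemma~\ref{lem:stIBP1} — is precisely what licenses the integration by parts $\mathbb{E}[\mathcal{T}_p^{\ell}1(X)g(X)]=-\mathbb{E}[\Delta^{-\ell}g(X)]$ underlying that proposition. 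If $\mathbb{E}[(\mathcal{T}_p^{\ell}1(X))^2]=\infty$ the asserted bound is vacuous; otherwise Proposition~\ref{prop:cramerao} gives $\mathrm{Var}[g(X)]\ge \mathbb{E}[\Delta^{-\ell}g(X)]^2/\mathbb{E}[(\mathcal{T}_p^{\ell}1(X))^2]$ directly.

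For the right inequality, take $h=\mathcal{T}_p^{\ell}1$ in the upper bound of Theorem~\ref{thm:klaassenbounds} with $f=g$. By Lemma~\ref{def:can_class}, $1\in\mathcal{F}_{\ell}^{(1)}(p)$ forces $h=\mathcal{T}_p^{\ell}1\in\mathcal{F}^{(0)}(p)\subset L^1(p)$, and the hypothesis $-\Delta^{-\ell}\mathcal{T}_p^{\ell}1>0$ a.e.\ says exactly that $h$ is decreasing, so $h$ is an admissible weight. One then invokes Lemma~\ref{lem:inverse}(ii) with $f\equiv 1$, which reads $\mathcal{L}_p^{\ell}\mathcal{T}_p^{\ell}1(x)=1-p(a^+-b_\ell)/p(x)$; under the hypotheses of Proposition~\ref{prop:ottomenz1} (as in its proof) the boundary value $p(a^+-b_\ell)$ vanishes, so $\mathcal{L}_p^{\ell}h\equiv 1$ on $\mathcal{S}(p)$. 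Hence the weight $-\mathcal{L}_p^{\ell}h(x)/\Delta^{-\ell}h(x)$ appearing in \eqref{eq:KlaassenVar} collapses to $1/(-\Delta^{-\ell}\mathcal{T}_p^{\ell}1(x))$, which is finite and positive $\mu$-a.e.\ by assumption, and substitution yields the right-hand side of \eqref{eq:97}. The equality case, should one wish to record it, is inherited from the Cauchy--Schwarz saturation in Theorem~\ref{thm:klaassenbounds}: equality forces $g=\alpha\,\mathcal{T}_p^{\ell}1+\beta$.

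The main obstacle is bookkeeping rather than conceptual. One must check that the integrability hypotheses attached to Proposition~\ref{prop:cramerao} and Theorem~\ref{thm:klaassenbounds} are genuinely implied by the (weaker-looking) hypotheses of Proposition~\ref{prop:ottomenz1} together with the pairing condition imposed on $g$; and, crucially, that the boundary contribution in $\mathcal{L}_p^{\ell}\mathcal{T}_p^{\ell}1$ really is zero — this is the only point at which the structural assumptions bite (it holds automatically when $\mathcal{S}(p)=\mathbb{R}$, cf.\ the Remark after Proposition~\ref{prop:ottomenz1}), and it is exactly what makes the weight reduce to the clean form on the right of \eqref{eq:97}.
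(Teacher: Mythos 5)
Your proposal is correct and follows essentially the same route as the paper: the lower bound is Proposition~\ref{prop:cramerao} with $f\equiv 1$, and the upper bound is Theorem~\ref{thm:klaassenbounds} with $f=g$ and $h=\mathcal{T}_p^{\ell}1$, using $\mathcal{L}_p^{\ell}\mathcal{T}_p^{\ell}1\equiv 1$ on $\mathcal{S}(p)$ once the boundary term vanishes (a point the paper also imposes, via the explicit extra assumption $p(b^-+a_{\ell})=p(a^+-b_{\ell})=0$ in the discussion preceding the corollary, rather than deriving it from Proposition~\ref{prop:ottomenz1} alone). Your bookkeeping of the admissibility conditions matches the paper's treatment.
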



We conclude with a generalized version of the elegant 
 inequality
due to \cite[Lemma 2.11]{menz2013uniform}, in the form stated in
\cite[Equation (1.5)]{carlen2013asymmetric}.

\begin{cor}[Asymmetric Brascamp-Lieb inequality]\label{cor:asbraslieb}
  Under the same conditions as above, if
  ${-}\Delta^{-\ell} \mathcal{T}_p^{\ell}1 \in L^1(\mu)$ then
  \begin{equation}
    \label{eq:98}
    \left| \mathrm{Cov}[f(X), g(X)] \right| \le \sup_x \left|
      \frac{\Delta^{-\ell}f(x)}{\Delta^{-\ell} \mathcal{T}_p^{\ell}1(x)} \right|
    \mathbb{E} \left[ \left| \Delta^{-\ell} g(X) \right| \right]
  \end{equation}
for all  $f, g$  in $L^2(p)$.
\end{cor}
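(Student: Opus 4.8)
The plan is to start from the first–order covariance identity \eqref{eq:14} and to exploit positivity of the kernel $K_p^{\ell}$ together with the Menz--Otto normalization from Proposition \ref{prop:ottomenz1}. Write $X,X'$ for independent copies of $X\sim p$ and set $w(x):=-\Delta^{-\ell}\mathcal{T}_p^{\ell}1(x)$, which is strictly positive on $\mathcal{S}(p)$ by the hypotheses carried over from Proposition \ref{prop:ottomenz1}. Since $f,g\in L^2(p)$ the covariance is finite, and \eqref{eq:14} gives
\[
 \mathrm{Cov}[f(X),g(X)] = \mathbb{E}\!\left[\frac{\Delta^{-\ell}f(X)}{w(X)}\;\cdot\; w(X)\frac{K_p^{\ell}(X,X')}{p(X)p(X')}\,\Delta^{-\ell}g(X')\right].
\]
First I would pull the factor $\Delta^{-\ell}f(X)/w(X)$ out in absolute value, bounding it by $\sup_{x\in\mathcal{S}(p)}|\Delta^{-\ell}f(x)/w(x)|$; this is legitimate because $K_p^{\ell}\ge 0$ by Lemma \ref{lem:can_inverserep2} and $w>0$, so that after this step the remaining integrand is nonnegative up to the sign of $\Delta^{-\ell}g(X')$.

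Next I would replace $\Delta^{-\ell}g(X')$ by $|\Delta^{-\ell}g(X')|$ and condition on $X'$. The integrand being then nonnegative, Tonelli's theorem applies without any integrability caveat and
\[
 \mathbb{E}\!\left[w(X)\frac{K_p^{\ell}(X,X')}{p(X)p(X')}\,|\Delta^{-\ell}g(X')|\right]
 = \mathbb{E}\!\left[|\Delta^{-\ell}g(X')|\;\mathbb{E}\!\left[\frac{K_p^{\ell}(X,X')}{p(X)p(X')}\,w(X)\;\Big|\;X'\right]\right].
\]
The key step is that the inner conditional expectation equals $1$ for every value of $X'$: for fixed $y\in\mathcal{S}(p)$,
\[
 \mathbb{E}\!\left[\frac{K_p^{\ell}(X,y)}{p(X)p(y)}\,w(X)\right] = \int_{\mathcal{S}(p)} \frac{K_p^{\ell}(u,y)}{p(y)}\,w(u)\,\mu(\mathrm{d}u) = \int_{\mathcal{S}(p)} p^{\star}_{y}(u)\,\mu(\mathrm{d}u) = 1,
\]
where $p^{\star}_{y}$ is the density produced by Proposition \ref{prop:ottomenz1}, the assumption $w=-\Delta^{-\ell}\mathcal{T}_p^{\ell}1\in L^1(\mu)$ guaranteeing that this quantity is finite and well defined. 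Combining the two displays yields
\[
 |\mathrm{Cov}[f(X),g(X)]| \le \sup_{x\in\mathcal{S}(p)}\left|\frac{\Delta^{-\ell}f(x)}{w(x)}\right|\,\mathbb{E}[|\Delta^{-\ell}g(X)|],
\]
and since $w(x)=-\Delta^{-\ell}\mathcal{T}_p^{\ell}1(x)$ the supremum is precisely the one in \eqref{eq:98}.

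A few routine points remain rather than genuine obstacles. Invoking \eqref{eq:14} requires the pair $(f,g)$ to meet the assumptions of Lemma \ref{lem:stIBP1}, which is part of ``the same conditions as above'' (cf.\ Corollary \ref{cor:brascamplieb}); if instead $\mathbb{E}[|\Delta^{-\ell}g(X)|]=\infty$ the inequality is vacuous, and if the supremum vanishes then $\Delta^{-\ell}f\equiv 0$ on $\mathcal{S}(p)$ so both sides are $0$. The divisions by $w$ and by $p$ cause no trouble because $X,X'$ take values in $\mathcal{S}(p)$ almost surely, where both $p>0$ and $w>0$. I expect the only ``hard part'' to be bookkeeping: matching the roles of $X$ and $X'$ in \eqref{eq:14} with the conditioning step so that Proposition \ref{prop:ottomenz1} is applied along the correct variable, and confirming that the sign conventions make the quantity pulled out of the expectation nonnegative.
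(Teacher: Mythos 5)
Your argument is correct and is essentially the paper's own proof: the authors likewise start from the covariance identity \eqref{eq:14}, bound $|\Delta^{-\ell}f(X)|$ by the supremum of $\left|\Delta^{-\ell}f/\Delta^{-\ell}\mathcal{T}_p^{\ell}1\right|$ after inserting the factor $-\Delta^{-\ell}\mathcal{T}_p^{\ell}1(X)$, and then condition on $X'$ and invoke Proposition \ref{prop:ottomenz1} to integrate out the kernel. Your additional remarks on positivity of $K_p^{\ell}$, Tonelli, and the degenerate cases are sound but not needed beyond what the paper records.
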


\begin{proof}
  Under the stated assumptions, we may apply \eqref{eq:14} to get,
  after some notational reshuffling,
  \begin{align*}
|    \mathrm{Cov}[f(X), g(X)] | 
& \le \mathbb{E} \left[ \frac{\left| \Delta^{-\ell}f(X) \right|}
     {{-}\Delta^{-\ell} \mathcal{T}_p^{\ell}1(X)}
     |\Delta^{-\ell} g(X')| \left({-}\Delta^{-\ell}\mathcal{T}_p^{\ell}1(X)\right)
     \frac{K_p^{\ell} (X, X')}{p(X) p(X')}   \right] \\
    & 
    \le \sup_x \left|\frac{\Delta^{-\ell}f(x)}{\Delta^{-\ell}
      \mathcal{T}_p^{\ell}1(x)}\right|
    \mathbb{E} \left[ |\Delta^{-\ell} g(X')|
      \frac{K_p^{\ell} (X, X')}{p(X) p(X')}
     \left({-} \Delta^{-\ell}\mathcal{T}_p^{\ell}1(X) \right) \right]\\
&     
  = \sup_x \left|\frac{ \Delta^{-\ell}f(x)}
  				{\Delta^{-\ell}\mathcal{T}_p^{\ell}1(x)}\right|
    \mathbb{E} \left[ \left|\Delta^{-\ell} g(X')\right|    \right]
  \end{align*}
  where the last line follows by conditioning on $X'$ and applying
  Proposition \ref{prop:ottomenz1}. 
\end{proof}
\subsection{Stein kernel and Cacoullos' bound}
It is natural to consider test function $h = -\mathrm{Id}$  in Theorem
\ref{thm:klaassenbounds}. Since $\Delta^{-\ell}h(x) = 1$, we obtain 
\begin{align*}
  \left|   \mathrm{Cov}[f(X), g(X)]  \right| \le
  \sqrt{\mathbb{E}\left[ \tau_p^{\ell}(X) (\Delta^{-\ell}f(X))^2
       \right]}\sqrt{\mathbb{E}\left[ \tau_p^{\ell}(X) (\Delta^{-\ell}g(X))^2
      \right]}
\end{align*}
In particular if $g = f$ then
\begin{align*}
  \mathrm{Var}[g(X)] \le \mathbb{E} \left[ \tau_p^{\ell}(X)
  (\Delta^{-\ell}g(X))^2 \right] 
\end{align*}
in which one recognizes the upper bounds from \cite{C82} and also,
when $\ell = 0$, \cite{saumard2018weighted}. The corresponding lower
bound in \eqref{eq:13} is obtained for $f(x) = \tau_p^{\ell}(x)$ for
which $\mathcal{T}_p^{\ell} f(x) = x \mathbb{I}_{\mathcal{S}(p)}(x)$,
and the {overall} bound becomes
{
 \begin{equation}
    \label{eq:KlaassenVartau}
  \frac{\mathbb{E}\left[ \tau_p^{\ell}(X) (\Delta^{-\ell}g(X))\right]^{2}}{\mathrm{Var} [X] } \le  \mathrm{Var}[g(X)]   \le \mathbb{E}\left[ (\Delta^{-\ell}g(X))^2
      {\tau_p^{\ell}(X)} \right]. 
  \end{equation}}

\begin{exm} In our examples \eqref{eq:KlaassenVartau} gives the following covariance identities.
\begin{itemize}
\item\label{ex:binom3}  Binomial distribution:  
  Let $X\sim \mathrm{Bin}(n, \theta)$ as in Example \ref{ex:binom1}.  From Example \ref{ex:binom2} we
  obtain the upper and lower bounds 
\begin{align*}
&   \frac{(1-\theta) }{n\theta} \mathbb{E} \left[ X \Delta^-g(X) \right]^2 \le
  \mathrm{Var}[g(X)] \le (1-\theta)\mathbb{E} \left[ X(\Delta^-g(X))^2
  \right]; \\
& \frac{\theta }{n(1-\theta)} \mathbb{E} \left[(n-X) \Delta^+g(X) \right]^2 \le
  \mathrm{Var}[g(X)] 
 \le  \theta \mathbb{E} \left[(n-X)(\Delta^+g(X))^2
  \right].  
\end{align*}
\item\label{ex:beta3} Beta distribution:  From Example \ref{ex:beta2}, for the $\mathrm{Beta}(\alpha, \beta)$-distribution with variance $\frac{\alpha \beta}{(\alpha + \beta)^2 (\alpha + \beta + 1)}$,
\begin{align*}
 &  \frac{(\alpha+\beta+1)}{\alpha\beta} \mathbb{E} \left[ X(1-X) g'(X) \right]^2 \le
  \mathrm{Var}[g(X)] \le \frac{1}{\alpha+\beta} \mathbb{E} \left[ X(1-X)(g'(X))^2
  \right].    
\end{align*} 
\end{itemize}
\end{exm}

The particular case of other Pearson/Ord distributions  is detailed in
Tables \ref{tab:Ord1}, \ref{tab:Ord2} and \ref{tab:Pearson}.  
The tables include the Binomial distribution and the Beta distribution for easy reference. The Stein operators which are given are those from Example \ref{ex:genera}.

\subsection{Eigenfunctions of the adjoint Stein operator}
\label{sec:eigenf-adjo-stein}

{A final} interesting choice is $h$ in Theorem \ref{thm:klaassenbounds} such
that the corresponding weight
{$\frac{-\mathcal{L}_p^{\ell}h(x)}{\Delta^{-\ell}h(x)}$} is constant,
i.e.\ any function $h$ such that there exists $\lambda \in \R$ for
which
\begin{align*}
   \frac{-\mathcal{L}_p^{\ell}h(x)}{\Delta^{-\ell}h(x)} = \lambda
  \mbox{ for all } x \in \mathcal{S}(p). 
\end{align*}
By construction, such  functions are solution to the eigenfunction
problem
\begin{align*}
h(x) =  {- \lambda} \mathcal{T}_p^{\ell}(\Delta^{-\ell}h)(x)   \mbox{ for all } x
  \in \mathcal{S}(p)
\end{align*}
where operator $\mathcal{R}_p^{\ell}h :=
\mathcal{T}_p^{\ell}(\Delta^{-\ell}h)$ is self-adjoint in the sense of
that
\begin{align*}
  \mathbb{E}[(\mathcal{R}_p^{\ell}f(X))  g(X)] =   \mathbb{E}[f(X) ( \mathcal{R}_p^{\ell}g(X))]
\end{align*}
for all $f, g$ such that Lemmas \ref{lma:skadj} and \ref{lem:stIBP1}
apply.

\section*{Acknowledgements}

The research of YS was partially supported by the Fonds de la
Recherche Scientifique -- FNRS under Grant no F.4539.16.  ME
acknowledges partial funding via a Welcome Grant of the Universit\'e
de Li\`ege and via the Interuniversity Attraction Pole StUDyS
(IAP/P7/06).  YS also thanks Lihu Xu for organizing the ``Workshop on
Stein's method and related topics'' at University of Macau in December
2018, and where a preliminary version of this contribution was first
presented. GR and YS thank Emilie Clette for fruitful discussions on a
preliminary version of this work. YS thanks C\'eline Esser  for many fruitful
discussions. We also thank Benjamin Arras for
several pointers to relevant literature, as well as corrections on the
first draft of the paper. 


\begin{sidewaystable}
\begin{tabular}{llll}\hline
  \textbf{name} & \textbf{p.m.f. $p(x)$} & \textbf{Stein kernel $\tau^{\ell}(x)$}  \\ 
  \textbf{parameter} & \textbf{support} & \textbf{Cum. Ord relation} &  \\ 
  \hline
  Poisson $(\lambda)$ & 
                        $e^{-\lambda}\lambda^x/x!$ & 
                                                              $\tau^-(x)=\lambda$ 
  \\
  $\lambda>0$ & $x=0,1,\ldots$ & 
                                 $\tau^+(x)=x$
  \\
                & & $(\delta,\beta,\gamma)=(0,0,\lambda)$ 
  \\ \hdashline[0.5pt/5pt]
                & Stein operators & \multicolumn{2}{l}{$ \mathcal{A}^+_{\mathrm{Poi}(\lambda)}g(x) =  (x-\lambda) g(x) -x \Delta^-g(x) $ }\\
                && \multicolumn{2}{l}{$ \mathcal{A}^-_{\mathrm{Poi}( \lambda)}g(x) = (x-\lambda)g(x) - \lambda \Delta^+g(x)$} \\
                && \multicolumn{2}{l}{$ \mathcal{A}_{\mathrm{Poi}( \lambda)}g(x) = x g(x) - \lambda g(x+1)$} \\ \hdashline[0.5pt/5pt]
                & Variance bounds& \multicolumn{2}{l}{$ \lambda \mathbb{E} \left[ \Delta^+g(X) \right]^2 \le \mathrm{Var}[g(X)] \le \lambda \mathbb{E} \left[  (\Delta^+g(X))^2 \right]$} \\
                & &  \multicolumn{2}{l}{$ \lambda^{-1}\mathbb{E} \left[ X \Delta^-g(X) \right]^2 \le \mathrm{Var}[g(X)] \le \mathbb{E} \left[ X(\Delta^-g(X))^2  \right]$}\\ \hline
Binomial $(n,\theta)$ & 
$\binom{n}{x}\theta^x(1-\theta)^{n-x}$ & 
$\tau^-(x)=\theta(n-x)$ \\
$0<\theta<1$ & $x=0,1,\ldots,n$ & 
$\tau^+(x)=(1-\theta)x$ \\
$n=1,2,\ldots$ & & $(\delta,\beta,\gamma)=(0,-\theta,n\theta)$ \\ \hdashline[0.5pt/5pt]
& Stein operators &
 \multicolumn{2}{l}{$ \mathcal{A}^+_{\mathrm{bin}(n,\theta)}g(x) =  (x-n\theta) g(x) -(1-\theta)x \Delta^-g(x) $ }\\
&& \multicolumn{2}{l}{$ \mathcal{A}^-_{\mathrm{bin}(n,\theta)}g(x) = (x-n\theta)g(x) - \theta(n-x) \Delta^+g(x)$} \\
&& \multicolumn{2}{l}{$ \mathcal{A}_{\mathrm{bin}(n,\theta)}g(x) = xg(x) + \frac{\theta}{1-\theta}(n-x)g(x+1)$} \\ \hdashline[0.5pt/5pt]
& Variance bounds& \multicolumn{2}{l}{$\frac{\theta}{n(1-\theta)} \mathbb{E} \left[(n-X)
    \Delta^+g(X) \right]^2 \le \mathrm{Var}[g(X)] \le \theta \mathbb{E}
    \left[ (n-X) (\Delta^+g(X))^2 \right]$} 
 \\
& & \multicolumn{2}{l}{$ \frac{1-\theta}{n\theta}\mathbb{E} \left[ X \Delta^-g(X) \right]^2 \le
 \mathrm{Var}[g(X)] \le (1-\theta)\mathbb{E} \left[ X(\Delta^-g(X))^2  \right]$}\\  \hline      
Negative Binomial $(r,p)$ & 
$\binom{r+x-1}{x}p^r(1-p)^{x}$ & 
$\tau^-(x)=\frac{1-p}{p}(r+x)$ \\
$0<p<1$ & $x=0,1,\ldots$ & 
$\tau^+(x)=\frac{1}{p}x$ 
\\
$r>0$ & & 
$(\delta,\beta,\gamma)=(0,\frac{1-p}{p},r\frac{1-p}{p})$ 
\\ 
\hdashline[0.5pt/5pt]
& Stein operators & \multicolumn{2}{l}{$ \mathcal{A}^+_{\mathrm{NB}(r,p)}g(x) =  \left(x-\frac{1-p}{p}r\right) g(x) -\frac{x}{p} \Delta^-g(x) $ }\\
&& \multicolumn{2}{l}{$ \mathcal{A}^-_{\mathrm{NB}(r,p)}g(x) = \left(x-\frac{1-p}{p}r\right) g(x) -\frac{1-p}{p}(r+x) \Delta^+g(x)$} \\
&& \multicolumn{2}{l}{$ \mathcal{A}_{\mathrm{NB}(r,p)}g(x) = xg(x) - (1-p)(r+x)g(x+1)$} \\ \hdashline[0.5pt/5pt]
& Variance bounds&   \multicolumn{2}{l}{$ \frac{1-p}{r} \mathbb{E}[(X+r)\Delta^+g(X)]^2  \le \mathrm{Var}[g(X)] \le  \frac{1-p}{p}\mathbb{E}[(X+r) (\Delta^+g(X))^2] $}\\
& &\multicolumn{2}{l}{$  \frac{1}{r(1-p)}\mathbb{E}[X\Delta^-g(X)]^2 \le
 \mathrm{Var}[g(X)] \le \frac{1}{p}\mathbb{E}[X (\Delta^-g(X))^2] $} \\  \hline     
\end{tabular} 
\caption{\label{tab:Ord1}Specific forms some discrete distributions
  from the cumulative Ord family. This table is a completed version of
  Table 1 of \cite{APP07}.}
\end{sidewaystable}

\begin{sidewaystable}
\begin{tabular}{llll}\hline
  \textbf{name} & \textbf{p.m.f. $p(x)$} & \textbf{Stein kernel $\tau^{\ell}(x)$}  \\ 
  \textbf{parameter} & \textbf{support} & \textbf{Cum. Ord relation}  \\ 
  \hline
  Hypergeometric & 
 $\frac{\binom{K}{x}\binom{N-K}{n-x}}{\binom{N}{n}}$ &  
$\tau^-(x)=\frac{1}{N}(K-x)(n-x)$ \\
  $(n,K,N)$   & $x=0,1,\ldots,\min\{K,n\}$ & 
  $\tau^+(x)=\frac{1}{N}x(x+N-K-n)$  \\
  $1 \leq K \leq N$& &
  $(\delta,\beta,\gamma)=\left(\frac{1}{N},\frac{-(n+K)}{N},\frac{nK}{N}\right)$  \\
  $n=1,2,\ldots,N$ &&&
  \\ \hdashline[0.5pt/5pt]
 & Stein operators & \multicolumn{2}{l}{$ \mathcal{A}^+_{\mathrm{H}(n,K,N)}g(x) = \left(x-\frac{nK}{N}\right)g(x) - \frac{1}{N}x (N-K-n+x)\Delta^-g(x)  $ }\\
                && \multicolumn{2}{l}{$ \mathcal{A}^-_{\mathrm{H}(n,K,N)}g(x) = \left(x-\frac{nK}{N}\right)g(x) - \frac{1}{N}(K-x) (n-x)\Delta^+g(x)$} \\
                && \multicolumn{2}{l}{$ \mathcal{A}_{\mathrm{H}(n,K,N)}g(x) =xg(x) +\frac{1}{N+K+n}x^2g(x) - \frac{1}{N+K+n}(K-x)(n-x)g(x+1) $} \\ 
  \hdashline[0.5pt/5pt]
                & Variance bounds& \multicolumn{2}{l}{$ \frac{N-1}{nK(N-K)(N-n)} \mathbb{E}[(K-X)(n-X)\Delta^+g(X)]^2 \le \mathrm{Var}[g(X)] \le  \frac{1}{N} \mathbb{E}[(K-X)(n-X)(\Delta^+g(X))^2 ]$} \\
                & & \multicolumn{2}{l}{$ \frac{N-1}{nK(N-K)(N-n)} \mathbb{E}[X(N-K-n+X)\Delta^-g(X)]^2 \le
                    \mathrm{Var}[g(X)] \le  \frac{1}{N} \mathbb{E}[X(N-K-n+X)(\Delta^-g(X))^2 ]  $} \\ \hline
Negative Hyper- & 
$\frac{\binom{x+r-1}{x}\binom{N-r-x}{K-x}}{\binom{N}{K}}$ & 
$\tau^-(x)=\frac{1}{N-K+1}(K-x)(r+x)$ \\
geometric $(N,K,r)$& $x=0,1,\ldots,K$ & 
$\tau^+(x)=\frac{1}{N-K+1}x(N-r+1-x)$ \\
$0\leq K\leq N$ & & $(\delta,\beta,\gamma)=\left(\frac{-1}{N-K+1},\frac{K-r}{N-K+1},\frac{rK}{N-K+1}\right)$ &\\ 
 \hdashline[0.5pt/5pt]
& Stein operators & \multicolumn{2}{l}{$ \mathcal{A}^+_{\mathrm{NH}(N,K,r)}g(x) =  \left(x-\frac{rK}{N-K+1}\right)g(x) - \frac{x(N+1-r-x)}{N-K+1}\Delta^-g(x)  $ }\\
&& \multicolumn{2}{l}{$ \mathcal{A}^-_{\mathrm{NH}(N,K,r)}g(x) = \left(x-\frac{rK}{N-K+1}\right)g(x) - \frac{(K-x)(r+x)}{N-K+1}\Delta^+g(x)$} \\
&& \multicolumn{2}{l}{$ \mathcal{A}_{\mathrm{NH}(N,K,r)}g(x) = xg(x) - \frac{1}{N-r+1}x^2g(x) - \frac{1}{N-r+1}(K-x)(r+x) g(x+1)$} \\[5pt] \hdashline[0.5pt/5pt]
& Variance bounds& 
\multicolumn{2}{l}{$   \frac{N-K+2}{r(N+1)K(N-K-r+1)} \mathbb{E}[(K-X)(r+X)\Delta^+g(X)]^2 \le
 \mathrm{Var}[g(X)]  $} \\[5pt]
 & & \multicolumn{2}{l}{$\mathrm{Var}[g(X)] \le \frac{1}{N-K+1} \mathbb{E}[(K-X)(r+X)(\Delta^+g(X))^2]$ } \\
& &\multicolumn{2}{l}{$ \frac{N-K+2}{r(N+1)K(N-K-r+1)} \mathbb{E}[X(N+1-r-X)\Delta^-g(X)]^2 \le
 \mathrm{Var}[g(X)]$} \\[5pt]
 & & \multicolumn{2}{l}{$\mathrm{Var}[g(X)] \le \frac{1}{N-K+1} \mathbb{E}[X(N+1-r-X)(\Delta^-g(X))^2] $}\\[5pt]
 \hline      
\end{tabular} 
\caption{\label{tab:Ord2}Specific form for some discrete distributions
  from the cumulative Ord family (second part).}
\end{sidewaystable}

\begin{sidewaystable}
\begin{tabular}{llll}\hline
\textbf{name} & \textbf{p.m.f. $p(x)$} & \textbf{$\tau(x)$}  \\ 
\textbf{parameter} & \textbf{support} & \textbf{Pearson relation} &  \\ 
\hline
Normal$(\mu,\sigma^2)$ & $\frac{1}{\sqrt{2\pi\sigma^2}}\exp\left(\frac{(x-\mu)^2}{2\sigma^2}\right)$ & $\tau(x)= \sigma^2$  \\
$\mu\in\R$, $\sigma^2>0$ & $x\in \R$ & $(\delta,\beta,\gamma)=(0,0,\sigma^2 )$  &  \\[5pt] \hdashline[0.5pt/5pt]
& Stein operators & \multicolumn{2}{l}{$ \mathcal{A}_{\mathrm{N(\mu,\sigma^2)}}g(x) = (x-\mu)g(x)-\sigma^2 g'(x)$ }
 \\[5pt] \hdashline[0.5pt/5pt]
& Variance bounds& \multicolumn{2}{l}{$ \sigma^2\mathbb{E}[g'(X)]^2  \le
  \mathrm{Var}[f(X)] \le \sigma^2\mathbb{E}[g'(X)^2] $} \\[5pt]
 \hline
Beta $(\alpha,\beta)$ & 
$x^{\alpha-1}(1-x)^{\beta-1}/B(\alpha,\beta)$ & 
$\tau(x)= \frac{x(1-x)}{\alpha+\beta} $ 
 \\
$\alpha>0$, $\beta>0$ & $x\in (0,1)$ & $(\delta,\beta,\gamma)=( \frac{-1}{\alpha+\beta},\frac{1}{\alpha+\beta},0 )$  & \\[5pt] \hdashline[0.5pt/5pt]
& Stein operators & \multicolumn{2}{l}{$\mathcal{A}_{\mathrm{Beta(\alpha,\beta)}}g(x) = \left(x-\frac{\alpha}{\alpha+\beta}\right)g(x)-\frac{x(1-x)}{\alpha+\beta} g'(x)$ }\\[5pt] \hdashline[0.5pt/5pt]
& Variance bounds& \multicolumn{2}{l}{$\frac{(\alpha+\beta+1)}{\alpha\beta} \mathbb{E} \left[ X(1-X) g'(X) \right]^2 \le
  \mathrm{Var}[g(X)] \le \frac{1}{\alpha+\beta} \mathbb{E} \left[ X(1-X)(g'(X))^2
  \right]   $} \\[5pt]
 \hline
Gamma$(\mu,\sigma^2)$ &
                        $x^{\alpha-1}\beta^{-\alpha}e^{-x/\beta}/\Gamma(\alpha)$ & $\tau(x)=\beta x $  \\
$\alpha>0$, $\beta>0$ & $x\in (0,\infty) \ (\alpha<1)$ & $(\delta,\beta,\gamma)=(0,\beta,0 )$  & \\
& $x\in [0,\infty)\ (\alpha\geq 1)$&  & \\[5pt] \hdashline[0.5pt/5pt]
& Stein operators & \multicolumn{2}{l}{$ \mathcal{A}_{\mathrm{Gamma(\alpha,\beta)}}g(x) = (x-\alpha \beta)g(x)-\beta x g'(x)$ }\\[5pt] \hdashline[0.5pt/5pt]
& Variance bounds& \multicolumn{2}{l}{$ \frac{1}{\alpha}\mathbb{E}[X g'(X)]^2  \le
  \mathrm{Var}[g(X)] \le \beta \mathbb{E}[X g'(X)^2] $} \\[5pt]
 \hline  
  {Student $(\nu)$} & $\nu^{-1/2} B(\nu/2,
  1/2)^{-1}( \nu/(\nu+x^2))^{(1+\nu)/2}$ & 
                                  {$\tau(x)=  \frac{x^2+\nu}{\nu-1} $ for $\nu>1$}  \\
  $\nu>0$ & $x\in \R$ & $(\delta,\beta,\gamma)=\left(\frac{1}{\nu-1}, 0, \frac{\nu}{\nu-1} \right)$  &  \\[5pt] \hdashline[0.5pt/5pt]
              & Stein operators & \multicolumn{2}{l}{$ \mathcal{A}_{\mathrm{t(\nu)}}g(x) =x g(x) - \frac{x^2 + \nu}{\nu-1} g'(x)  $ }\\[5pt] \hdashline[0.5pt/5pt]
              & Variance bounds ($\nu>2$)& \multicolumn{2}{l}{$ \frac{{(\nu-2)}}{{\nu} (\nu-1)^2} \mathbb{E}[(X^2 + \nu) g'(X)]^2 \le
                                 \mathrm{Var}[g(X)] \le \frac{1}{\nu-1} \mathbb{E}[(X^2 + \nu) ( g'(X)^2)]  $} \\[5pt]
 \hline  
F distribution  $(d_1,d_2)$ & $\frac{\left(\frac{d_1}{d_2}\right)^{d_1/2}x^{\frac{d_1}{2}-1} \left(1+ \frac{d_1}{d_2}x \right)^{-\frac{d_1+d_2}{2}}}{\mbox{Beta}(d_1/2,d_2/2)} $ & 
$\tau(x)= \frac{2x(d_1x+d_2)}{d_1(d_2-2)}$ for $d_2>2$  \\
$d_1>0$, $d_2>0$ & $x\in (0,\infty)$ & $(\delta,\beta,\gamma)=\left(\frac{2d_1}{d_1(d_2-2)}, \frac{2d_2}{d_1(d_2-2)},0 \right)$  &  \\[5pt] \hdashline[0.5pt/5pt]
& Stein operators & \multicolumn{2}{l}{$ \mathcal{A}_{\mathrm{F(d_1,d_2)}}g(x) = \left(x-\frac{d_2}{d_2-1}\right)g(x) - \frac{2x(d_2+d_1x}{d1(d_2-2)}g'(x)$ }\\[5pt]
 \hdashline[0.5pt/5pt]
& Variance bounds ($d_2>4$) & \multicolumn{2}{l}{$  {\frac{2(d_2-4)}{d_1d_2^2(d_1+d_2-2)}} \mathbb{E}[X (d_2+d_1X)g'(X)]^2 \le
 \mathrm{Var}[g(X)] \le {\frac{2}{d_1(d_2-2)}}\mathbb{E}[X(d_2+d_1X)g'(X)^2]  $} \\[5pt]
 \hline  
\end{tabular} 
\caption{\label{tab:Pearson}Specific form for some continuous
  distributions. If the distribution belongs to the Pearson family
  (examples from \cite{afendras2011matrix}), the coefficient
  $(\delta,\beta,\gamma)$ are given. This table is an adapted version
  of Table 1 of \cite{APP07}.}
\end{sidewaystable}

\bibliographystyle{abbrv} 

 \appendix
 \section{Proofs from Section \ref{sec:suff-cond}}
\label{sec:some-non-essential}

\begin{proof}[Proof of Proposition \ref{prop:suff1}]
 { In order for \eqref{eq:3} to hold we need (i)
  $f(\cdot) g(\cdot - \ell) \in \mathcal{F}^{(1)}_{\ell}(p)$ and (ii)
  $f(\cdot) \Delta^{-\ell} g(\cdot) \in L^1(p)$. Condition (ii) is
  satisfied under \eqref{eq:11}.  By definition of
  $\mathcal{F}^{(1)}_{\ell}(p)$, condition (i) holds if the following
  three conditions apply: (iA)
  $f(\cdot) g(\cdot - \ell) \in \mathrm{dom}(p, \Delta^{\ell})$, (iB)
  $\Delta^{\ell} \big(p(\cdot)f(\cdot) g(\cdot-\ell)\big) \mathbb{I}[
  \mathcal{S}(p)] \in L^1(\mu)$ and (iC)
  $\mathbb{E} \left[ \mathcal{T}_p^{\ell}f(X) g(X-\ell) \right] =
  0$. The proof hinges on product formula \eqref{eq:productrule} which
  yields:
  \begin{equation*}
    \Delta^{\ell} \big(p(x)f(x) g(x-\ell)\big) =  \Delta^{\ell}
    \big(p(x)f(x)\big)  g(x) + p(x) f(x) \Delta^{-\ell} g(x)
  \end{equation*}
  for all $x \in \mathcal{S}(p)$.  In light of this, condition (iA) is
  implied by the requirement that
  $ f\in \mathrm{dom}(p, \Delta^{\ell})$ and
  $g \in \mathrm{dom}(\Delta^{-\ell})$. Similarly, because condition
  (iB) is equivalent to
  $\frac{\Delta^{\ell} \big(p(\cdot)f(\cdot) g(\cdot-\ell)\big)
  }{p(\cdot)} \in L^1(p)$, we see that it is guaranteed by
  \eqref{eq:11}. Finally, applying \eqref{eq:30}, we that (iC) follows
  from \eqref{eq:9}. Hence Condition (i) holds under the stated
  assumptions. 
}


\end{proof}

{\begin{proof}[Proof of Proposition \ref{prop:ibp2}] In order for \eqref{eq:21} to hold, it is necessary and
  sufficient that (i) $f \in L^1(p)$,
  $g \in \mathrm{dom}(\Delta^{-\ell})$, (ii)
  $\big(\mathcal{L}_p^{\ell} f(\cdot)\big) g(\cdot-\ell)$ and (iii)
  $\mathcal{L}_p^{\ell} f ( \Delta^{-\ell}g) \in L^1(p)$. Conditions
  (i) and (iii) are stated explicitly and all that remains is to check
  that (ii) is equivalent to the stated assumptions. As before, we
  recall that (ii) is equivalent to (iiA) $\big(\mathcal{L}_p^{\ell}
  f(\cdot)\big) g(\cdot-\ell) \in \mathrm{dom}(p, \Delta^{\ell})$;
  (iiB) $\Delta^{\ell} \bigg(p(\cdot)\big(\mathcal{L}_p^{\ell}
  f(\cdot)\big) g(\cdot-\ell)\bigg)/p(\cdot) \in L^1(p)$; (iiC)
  $\mathbb{E} \left[ \mathcal{T}_p^{\ell}\left(
      \big(\mathcal{L}_p^{\ell} f(\cdot)\big) g(\cdot-\ell) \right)
    (X) \right] = 0$. As in the proof of Proposition \ref{prop:suff1},
  the result hinges on the product rule \eqref{eq:productrule} which
  now reads
  \begin{align*}
    \Delta^{\ell} \left( p(x) \big(\mathcal{L}_p^{\ell} f(x)\big)
      g(x-\ell) \right) & = \bigg(\Delta^{\ell} \big(p(x)
    \mathcal{L}_p^{\ell} f(x)\big)\bigg) g(x) + p(x)
                          \mathcal{L}_p^{\ell} f(x) \Delta^{-\ell} g(x) \\
    &   =  (f(x) - \mathbb{E}[f(X)]) g(x) + p(x)
                          \mathcal{L}_p^{\ell} f(x) \Delta^{-\ell} g(x) 
  \end{align*}
  Hence condition (iiA) holds solely under the assumption that
  $g \in \mathrm{dom}(\Delta^{-\ell})$, condition (iiA) holds under
 \eqref{eq:91} and  \eqref{eq:18}. Finally, \eqref{eq:92} guarantees
 that (iiC) is satisfied. 
\end{proof}
}

{\begin{proof}[Proof of Proposition \ref{lma:suffcond1}] We {want
    to apply Proposition \ref{prop:ibp2}; hence we} check each
  condition in Proposition \ref{prop:ibp2} separately. By assumption,
  \eqref{eq:91} is satisfied and $g \in \mathrm{dom}(\Delta^{-\ell})$.  
  
  \begin{itemize}
  \item For Assumption \eqref{eq:18}: First suppose that $g$ is
    monotone increasing.  It is to show that
    $\mathcal{L}_p^{\ell} f ( \Delta^{-\ell} g )\in L^1(p)$.  {As
      $f \in L^1(p)$ is assumed,} we can use \eqref{eq:reprsntform}
    to get
   \begin{align*}
     \mathbb{E} \left[ \left| \mathcal{L}_p^{\ell}f(X) \right| \left|
     \Delta ^{-\ell}g(X) \right| \right] & =
\mathbb{E} \left[ \left| \mathcal{L}_p^{\ell}f(X) \right| 
                                           \Delta ^{-\ell}g(X)
                                           \right]\\
                                         & \le \mathbb{E}  \left[  \left|
                                           f(X_2) - f(X_1) \right|
                                                     \Phi_p^{\ell}(X_1, X,
                                                     X_2) \Delta^{-\ell}g(X)
                                                     \right]              \\                            
& \le \mathbb{E}  \left[  \left| f(X_2) - f(X_1) \right|
                                                    \mathbb{E}\bigg[ \Phi_p^{\ell}(X_1, X, X_2)
                                                                                                           \Delta^{-\ell}g(X)  \, | \, X_1, X_2\bigg]\right]  \\
     & = \mathbb{E}  \bigg[  \left| f(X_2) - f(X_1) \right|
       \left( g(X_2)-g(X_{1}) \right)
\mathbb{I}[X_1<X_2]       \bigg]
   \end{align*}
   where we used the first identity in \eqref{eq:28} in the last line.
   This last expression is necessarily finite because $f, g$ and $fg$
   are in $L^1(p)$.  The general conclusion follows from the fact that
   any function of bounded variation is the difference between two
   monotone functions; {the triangle inequality thus} yielding the
   claim.
 
 \item For Assumption \eqref{eq:92}: Since $f \in L^1(p)$, we can
   apply \eqref{eq:reprsntform} and the definition of $\Phi_p^{\ell}$
   to obtain
{\begin{align*}
-   p(x) \mathcal{L}_p^{\ell} f(x) =   \mathbb{E}[f(X)\chi^{-\ell}(x, X)] \mathbb{E} [ \chi^{\ell}(X,
    x)] - \mathbb{E} [f(X) \chi^{\ell}(X,
    x)] \mathbb{E}[\chi^{-\ell}(x, X)]. 
    \end{align*}
    }
   Then
  \begin{align*}
& \lim_{x \to a, x>a} \left| \bigg(\mathcal{L}_p^{\ell}
    f(x-b_{\ell})\bigg)  g(x-b_{\ell}-\ell)  p(x-b_{\ell})\right| \\
    &  \le    \lim_{x \to a, x>a} \bigg( | g(x-b_{\ell}-\ell) |
                      \mathbb{E} [|f(X)| \chi^{-\ell}( 
                      x-b_{\ell}, X)]   \mathbb{E} [\chi^{\ell}(X, 
                      x-b_{\ell})] \\
                    & \qquad +  |
                      g(x-b_{\ell}-\ell) | \mathbb{E}[|f(X)|\chi^{\ell}(X, x-b_{\ell})]   \mathbb{E} [\chi^{-\ell}(x-b_{\ell}, 
                      X)] \bigg)\\
   & \le   \lim_{x \to a, x>a}  | g(x-b_{\ell}-\ell) |
                      \mathbb{E} [|f(X)| \chi^{-\ell}( 
                      x-b_{\ell}, X)]   \mathbb{P} \left( X \le  
                      x-a_{\ell}- b_{\ell} \right) & =L_1 \\
                    & +  \lim_{x \to a, x>a} 
                      |g(x-b_{\ell}-\ell) |
                      \mathbb{E}[|f(X)|\chi^{\ell}(X, x-b_{\ell})]
                      \mathbb{P} \left( X \ge x \right) & =L_2
  \end{align*}
and 
    \begin{align*}
     &  \lim_{x \to b, x<b} \left| \bigg(\mathcal{L}_p^{\ell}
      f(x+a_{\ell}) \bigg) g(x+a_{\ell}-\ell)  p(x+a_{\ell}) \right|\\
       &  \le    \lim_{x \to b, x<b} \bigg( | g(x+a_{\ell}-\ell) |
                      \mathbb{E} [|f(X)| \chi^{-\ell}( 
                      x+a_{\ell}, X)]   \mathbb{E} [\chi^{\ell}(X, 
                      x+a_{\ell})] \\
                    & \qquad +  |
                      g(x+a_{\ell}-\ell) | \mathbb{E}[|f(X)|\chi^{\ell}(X, x+a_{\ell})]   \mathbb{E} [\chi^{-\ell}(x+a_{\ell}, 
                      X)] \bigg)\\
       &  \le    \lim_{x \to b, x<b} | g(x+a_{\ell}-\ell) |
                      \mathbb{E} [|f(X)| \chi^{-\ell}( 
                      x+a_{\ell}, X)]   \mathbb{P}\left(X \le  
                      x\right) & =L_3 \\
                    & +  \lim_{x \to b, x<b}   |
                      g(x+a_{\ell}-\ell) |
                      \mathbb{E}[|f(X)|\chi^{\ell}(X, x+a_{\ell})]
                      \mathbb{P} \left( X \ge x + a_{\ell} + b_{\ell}) \right)& =L_4.
    \end{align*} 
    Condition \ref{item:cond1g} guarantees that $L_1 = L_4 = 0$;
    condition \ref{item:cond2g} guarantees that $L_2 = L_3 = 0$. If,
    furthermore, $f$ is bounded then the sufficiency of
    \ref{item:cond1g} is immediate; if $f \in L^2(p)$ then it follows
    from the Cauchy-Schwarz inequality.
    \end{itemize}
    \end{proof} 
}

\end{document}